\def\@cite#1#2{[{{\bfseries #1}\if@tempswa , #2\fi}]}
\renewcommand{\section}{%
\@startsection{section}{1}{\z@}
{0.5truecm plus -1ex minus -.2ex}%
{1.0ex plus .2ex}{\bfseries\large}}
\def\@seccntformat#1{\csname the#1\endcsname.\ }
\numberwithin{equation}{section} 
\theoremstyle{theorem}
\newtheorem{thm}{Theorem}[section]
\newtheorem{lem}[thm]{Lemma}
\theoremstyle{definition}
\newtheorem{df}{Definition}[section]
\newtheorem{remark}{Remark}[section]
\newtheorem*{prth1.1}{Proof of Theorem 1.1}
\newtheorem*{prth1.2}{Proof of Theorem 1.2}
\newtheorem*{prlem2.1}{Proof of Lemma 2.1}
\newtheorem*{prlem2.2}{Proof of Lemma 2.2}
\newcommand{\ep}{\varepsilon}
\let\hat\widehat
\def\Pi{\hat\pi}
\begin{document}
\footnote[0]
    {2010 {\it Mathematics Subject Classification}\/: 
    35G31, 35B25, 35D30, 35A40.     
    }
\footnote[0] 
    {{\it Key words and phrases}\/: 
    Cahn--Hilliard approaches; nonlinear diffusions; 
    parabolic-elliptic chemotaxis systems; 
    existence; time discretizations.  
} 
\begin{center}
    \Large{{\bf Convergence of a Cahn--Hilliard type system to \\ 
a parabolic-elliptic chemotaxis system \\ 
with nonlinear diffusion}}
\end{center}
\vspace{5pt}
\begin{center}
    Shunsuke Kurima%
    \\
    \vspace{2pt}
    Department of Mathematics, 
    Tokyo University of Science\\
    1-3, Kagurazaka, Shinjuku-ku, Tokyo 162-8601, Japan\\
    {\tt shunsuke.kurima@gmail.com}\\
    \vspace{2pt}
\end{center}
\begin{center}    
    \small \today
\end{center}

\vspace{2pt}
\newenvironment{summary}
{\vspace{.5\baselineskip}\begin{list}{}{%
     \setlength{\baselineskip}{0.85\baselineskip}
     \setlength{\topsep}{0pt}
     \setlength{\leftmargin}{12mm}
     \setlength{\rightmargin}{12mm}
     \setlength{\listparindent}{0mm}
     \setlength{\itemindent}{\listparindent}
     \setlength{\parsep}{0pt}
     \item\relax}}{\end{list}\vspace{.5\baselineskip}}
\begin{summary}
{\footnotesize {\bf Abstract.} 
This paper deals with a parabolic-elliptic chemotaxis system with nonlinear diffusion. 
It was proved that there exists a solution of 
a Cahn--Hilliard system as an approximation of 
a nonlinear diffusion equation by applying an abstract theory 
by Colli--Visintin [Comm.\ Partial Differential Equations {\bf 15} (1990), 737--756] 
for a doubly nonlinear evolution inclusion 
with some bounded monotone operator and 
subdifferential operator 
of a proper lower semicontinuous convex function 
(cf.\ Colli--Fukao [J. Math.\ Anal.\ Appl.\ {\bf 429} (2015), 1190--1213]). 
Moreover, Colli--Fukao [J. Differential Equations {\bf 260} (2016), 6930--6959]  
established existence of solutions to the nonlinear diffusion equation 
by passing to the limit in the Cahn--Hilliard equation. 
However, Cahn--Hilliard approaches to chemotaxis systems with
nonlinear diffusions seem not to be studied yet. 
This paper will try to derive existence of
solutions to a parabolic-elliptic chemotaxis system with nonlinear diffusion 
by passing to the limit in a Cahn--Hilliard type chemotaxis system. 
}
\end{summary}
\vspace{10pt}

\newpage

\section{Introduction}\label{Sec1}

A relation between a nonlinear diffusion equation  
and a Cahn--Hilliard equation 
has been studied.    
Colli--Fukao \cite{CF-2015, CF-2016} 
considered the nonlinear diffusion equation 
\begin{align}\label{E}\tag*{(E)}
u_{t} - \Delta\xi = g,\ \xi \in \beta(u) 
\quad \mbox{in}\ 
\Omega\times(0, T)  
\end{align}
and the Cahn--Hilliard type of approximate equation    
\begin{align}\label{Eep}\tag*{(E)$_{\ep}$}
\begin{cases}
(u_{\ep})_{t} - \Delta\mu_{\ep} = 0
& \mbox{in}\ 
\Omega\times(0, T), 
\\
\mu_{\ep} = -\ep\Delta u_{\ep} + \xi_{\ep} + \pi_{\ep}(u_{\ep}) - f,\ 
\xi_{\ep} \in \beta(u_{\ep})
& \mbox{in}\ 
\Omega\times(0, T), 
\end{cases}
\end{align}
where $\Omega \subset \mathbb{R}^d$ ($d = 1, 2, 3$) is a bounded domain, 
$\ep \in (0, 1]$, 
$T>0$, 
$\beta : \mathbb{R}\to\mathbb{R}$ is a multi-valued maximal monotone function, 
$\pi_{\ep} : \mathbb{R}\to\mathbb{R}$ is an anti-monotone function which goes to $0$ 
in some sense as $\ep \searrow 0$, 
$f : \Omega\times(0, T) \to \mathbb{R}$ is a given function. 
To prove existence of solutions to \ref{Eep} they used one more approximation 
\begin{align}\label{Eeplamb}\tag*{(E)$_{\ep, \lambda}$}
\begin{cases}
(u_{\ep, \lambda})_{t} -\Delta\mu_{\ep, \lambda} = 0
& \mbox{in}\ \Omega\times(0, T), 
\\
\mu_{\ep, \lambda} = \lambda(u_{\ep, \lambda})_{t}  
           -\ep\Delta u_{\ep, \lambda} + \beta_{\lambda}(u_{\ep, \lambda}) 
           + \pi_{\ep}(u_{\ep, \lambda}) - f 
& \mbox{in}\ \Omega\times(0, T), 
\end{cases}
\end{align}
where $\lambda>0$ and 
$\beta_{\lambda}$  
is the Yosida approximation of $\beta$ on $\mathbb{R}$. 
In \cite{CF-2015} they proved 
existence of solutions to \ref{Eeplamb} 
by applying an abstract theory 
by Colli--Visintin \cite{CV-1990} 
for the doubly nonlinear evolution inclusion:  
    \[
    Au'(t) + \partial\psi(u(t)) \ni k(t)
    \]
with some bounded monotone operator $A$ and 
subdifferential operator $\partial\psi$ 
of a proper lower semicontinuous convex function $\psi$.  
Next, in \cite{CF-2016} they
derived existence of solutions to \ref{Eep} 
and \ref{E} by passing to the limit in \ref{Eeplamb} as $\lambda \searrow 0$ 
and in \ref{Eep} as $\ep \searrow 0$ individually. 
On the other hand, 
relations between chemotaxis systems with nonlinear diffusions  
and Cahn--Hilliard type chemotaxis systems seem not be studied yet. 

In this paper we consider 
the parabolic-elliptic chemotaxis system with nonlinear diffusion 
\begin{equation}\label{P}\tag*{(P)}
\begin{cases}
u_t - \Delta \beta(u) + \eta\nabla\cdot(u\nabla v) = g  
&\mbox{in}\ \Omega \times (0, T), \\[2mm] 
0 = \Delta v - v +u 
&\mbox{in}\ \Omega \times (0, T), \\[2mm]
\partial_\nu \beta(u) = \partial_\nu v = 0 
&\mbox{on}\ \partial\Omega \times (0, T), \\[2mm] 
u(0) = u_0 &\mbox{in}\ \Omega,   
\end{cases}
\end{equation}
where $\Omega \subset \mathbb{R}^d$ ($d= 1, 2, 3$) is a bounded domain 
with smooth boundary $\partial\Omega$, $T>0$, $\eta \in \mathbb{R}$, 
$\beta : \mathbb{R} \to \mathbb{R}$ is a single-valued maximal monotone function, 
$\partial_\nu$ denotes differentiation with respect to
the outward normal of $\partial\Omega$, 
$g : \Omega\times(0, T) \to \mathbb{R}$ and $u_{0} : \Omega \to \mathbb{R}$ 
are given functions. 
Also, in reference to \cite{CF-2016}, we deal with the Cahn--Hilliard type 
chemotaxis system  
\begin{equation}\label{Pep}\tag*{(P)$_{\ep}$}
\begin{cases}
(u_{\ep})_t - \Delta \mu_{\ep} + \eta\nabla\cdot(u_{\ep} \nabla v_{\ep}) = 0 
&\mbox{in}\ \Omega \times (0, T), \\[2mm] 
\mu_{\ep} = - \ep\Delta u_{\ep} + \beta(u_{\ep}) + \pi_{\ep}(u_{\ep}) - f 
&\mbox{in}\ \Omega \times (0, T), \\[2mm]
0 = \Delta v_{\ep} - v_{\ep} + u_{\ep} 
&\mbox{in}\ \Omega \times (0, T), \\[2mm]
\partial_{\nu} \mu_{\ep} = \partial_{\nu} u_{\ep} = \partial_{\nu} v_{\ep} = 0 
&\mbox{on}\ \partial\Omega \times (0, T), \\[2mm] 
u_{\ep} (0) = u_{0\ep} &\mbox{in}\ \Omega,  
\end{cases}
\end{equation}
where $\ep \in (0, 1]$, 
$\pi_{\ep} : \mathbb{R} \to \mathbb{R}$ is an anti-monotone function,  
$f : \Omega\times(0, T) \to \mathbb{R}$ and  
$u_{0\ep} : \Omega \to \mathbb{R}$ are given functions. 
Moreover, in reference to \cite{CF-2016}, in this paper 
we assume that 
\begin{enumerate} 
\setlength{\itemsep}{2mm}
\item[(A1)] 
$\beta : \mathbb{R} \to \mathbb{R}$                                
is a single-valued maximal monotone function with effective domain $D(\beta)$  
such that 
there exists a proper lower semicontinuous convex function 
$\hat{\beta} : \mathbb{R} \to [0, +\infty]$ 
with effective domain $D(\hat{\beta})$ 
satisfying that  
$\hat{\beta}(0) = 0$ and 
$\beta = \partial\hat{\beta}$, 
where $\partial\hat{\beta}$  
is the subdifferential of $\hat{\beta}$.
\item[(A2)] There exist constants $c_1, c_2 > 0$ such that 
\[
\hat{\beta}(r) \geq c_1 |r|^4 - c_2 
\quad \mbox{for all}\ r \in \mathbb{R}. 
\] 
\item[(A3)] $g\in L^2\bigl(0, T; L^2(\Omega)\bigr)$ 
and $\int_{\Omega}g(t)\,dx = 0$ for a.a.\ $t \in (0, T)$. 
Then we fix a solution 
$f\in L^2\bigl(0, T; H^2(\Omega)\bigr)$ of 
\begin{equation*}
    \begin{cases}
        -\Delta f(t) = g(t) 
        & \mbox{a.e.\ in}\ \Omega,
    \\[2mm] 
        \partial_{\nu}f(t) = 0 
        & \mbox{in the sense of traces on}\ \partial\Omega 
    \end{cases}
\end{equation*}
for a.a.\ $t\in(0, T)$, that is, 
\begin{equation*}
\int_{\Omega}\nabla f(t)\cdot\nabla z = \int_{\Omega}g(t)z \quad 
\mbox{for all}\ z\in H^1(\Omega). 
\end{equation*}
\item[(A4)] $\pi_{\ep} : \mathbb{R} \to \mathbb{R}$ is                          
a Lipschitz continuous function 
and there exists a constant $c_{3}>0$ such that 
\begin{equation*}
|\pi_{\ep}(0)| + \|\pi_{\ep}'\|_{L^{\infty}(\mathbb{R})} \leq c_{3}\ep 
\quad \mbox{for all}\ \ep\in(0, 1].
\end{equation*} 
\item[(A5)] 
$u_{0} : \Omega \to \mathbb{R}$ is a measurable function 
satisfying $\hat{\beta}(u_{0})\in L^1(\Omega)$ 
and $m_{0} := \frac{1}{|\Omega|}\int_{\Omega} u_{0} \in \mbox{Int}\, D(\beta)$. 
Moreover, 
let $u_{0\ep}\in H^2(\Omega)$  
fulfill $\hat{\beta}(u_{0\ep})\in L^1(\Omega)$, 
$\frac{1}{|\Omega|}\int_{\Omega}u_{0\ep} = m_{0}$  
and 
\begin{equation*}
\|u_{0\ep}\|_{L^4(\Omega)}^4 \leq c_{4},\quad 
\int_{\Omega}\hat{\beta}(u_{0\ep}) \leq c_{4}, \quad 
\ep\|\nabla u_{0\ep}\|_{L^2(\Omega)}^2 \leq c_{4}
\end{equation*}
for all $\ep\in(0, 1]$, where $c_{4} > 0$  
is a constant independent of $\ep$; 
in addition, $u_{0\ep}\to u_{0}$ in $L^4(\Omega)$ as $\ep\searrow0$.
\end{enumerate}
\smallskip

\noindent The three functions 
\begin{align*}
&\beta_{1}(r) = r|r|^{m-1},\ r \in \mathbb{R}, 
\quad 
\beta_{2}(r) = \ln \frac{1+r}{1-r},\ r \in (-1, 1), 
\\ 
&\beta_{3}(r) = |r|\ln \frac{1+r}{1-r},\ r \in (-1, 1), 
\end{align*}
where $m\geq3$ is some constant, are examples of $\beta$. 
Indeed, we have that 
\[
\ln \frac{1+r}{1-r} \geq \frac{8}{3}r^3 \quad \mbox{for all}\ r \in (-1, 1). 
\]
The function $\beta_{1}$ appears in the porous media equation 
(see e.g., \cite{ASS-2016, M-2010, V-2007, Y-2008}).  

%
%
%
Let us define the Hilbert spaces 
   $$
   H:=L^2(\Omega), \quad V:=H^1(\Omega)
   $$
 with inner products 
 \begin{align*} 
 &(u_{1}, u_{2})_{H}:=\int_{\Omega}u_{1}u_{2}\,dx \quad  (u_{1}, u_{2} \in H), \\
 &(v_{1}, v_{2})_{V}:=
 \int_{\Omega}\nabla v_{1}\cdot\nabla v_{2}\,dx + \int_{\Omega} v_{1}v_{2}\,dx \quad 
 (v_{1}, v_{2} \in V),
\end{align*}
 respectively,
 and with the related Hilbertian norms. 
 Moreover, we use the notation
   $$
   W:=\bigl\{z\in H^2(\Omega)\ |\ \partial_{\nu}z = 0 \quad 
   \mbox{a.e.\ on}\ \partial\Omega\bigr\}.
   $$ 
 The notation $V^{*}$ denotes the dual space of $V$ with 
 duality pairing $\langle\cdot, \cdot\rangle_{V^*, V}$. 
 Moreover, in this paper, the bijective mapping $F : V \to V^{*}$ and 
 the inner product in $V^{*}$ are defined as 
    \begin{align*}
    &\langle Fv_{1}, v_{2} \rangle_{V^*, V} := 
    (v_{1}, v_{2})_{V} \quad (v_{1}, v_{2}\in V),   
    \\[1mm]
    &(v_{1}^{*}, v_{2}^{*})_{V^{*}} := 
    \left\langle v_{1}^{*}, F^{-1}v_{2}^{*} 
    \right\rangle_{V^*, V} 
    \quad (v_{1}^{*}, v_{2}^{*}\in V^{*}).   
    \end{align*}
This article employs the Hilbert space 
  $$
  V_{0}:=\left\{ z \in H^1(\Omega)\ \Big{|} \ \int_{\Omega} z = 0 \right\}   
  $$
 with inner product 
 \begin{align*} 
 (v_{1}, v_{2})_{V_{0}}:=
 \int_{\Omega}\nabla v_{1}\cdot\nabla v_{2}\,dx 
 \quad (v_{1}, v_{2} \in V_{0}) 
\end{align*}
 and with the related Hilbertian norm.
 The notation $V_{0}^{*}$ denotes the dual space of $V_{0}$ with 
 duality pairing $\langle\cdot, \cdot\rangle_{V_{0}^*, V_{0}}$. 
 Moreover, in this paper, the bijective mapping ${\cal N} : V_{0}^{*} \to V_{0}$ and 
 the inner product in $V_{0}^{*}$ are specified by  
    \begin{align*}
    &\langle v^{*}, v \rangle_{V_{0}^*, V_{0}} =:  
    \int_{\Omega} \nabla {\cal N}v^{*} \cdot \nabla v 
    \quad (v^{*} \in V_{0}^*, v \in V_{0}),   
    \\[1mm]
    &(v_{1}^{*}, v_{2}^{*})_{V_{0}^{*}} := 
    \left\langle v_{1}^{*}, {\cal N}v_{2}^{*} 
    \right\rangle_{V_{0}^*, V_{0}} 
    \quad (v_{1}^{*}, v_{2}^{*}\in V_{0}^{*}).  
    \end{align*}

We define weak solutions of \ref{P} and \ref{Pep} as follows.
%
%
%
 \begin{df}         
 A pair $(u, \mu)$ with 
    \begin{align*}
    u\in H^1(0, T; V^{*})\cap L^{\infty}(0, T; L^4(\Omega)),\ \mu \in L^2(0, T; V)
    \end{align*}
 is called a {\it weak solution} of \ref{P} 
 if $(u, \mu)$ satisfies 
    \begin{align}
        &\langle u'(t), z \rangle_{V^{*}, V} + 
          \int_{\Omega} \nabla \mu(t) \cdot \nabla z 
          - \eta\int_{\Omega} 
                             u(t)\nabla(1-\Delta)^{-1}u(t) \cdot \nabla z  = 0 
         \notag \\[2mm] 
         &\hspace{81mm} \mbox{for all}\ z \in V\ 
          \mbox{and a.a}.\ t\in(0, T), 
          \label{de1}
     \\[4mm]
        & \mu = \beta(u) - f \quad \mbox{a.e.\ on}\ \Omega\times(0, T), \label{de2}
     \\[3mm]
        & u(0) = u_{0} \quad \mbox{a.e.\ on}\ \Omega. \label{de3}
     \end{align}
 \end{df}
%
%
%
%
 \begin{df}       
 A pair $(u_{\ep}, \mu_{\ep})$ with 
    \begin{align*}
    &u_{\ep}\in H^1(0, T; V^{*})\cap L^{\infty}(0, T; V)\cap 
    L^2(0, T; W), 
    \\
    &\mu_{\ep}\in L^2(0, T; V)
    \end{align*}
 is called a {\it weak solution} of \ref{Pep} if 
 $(u_{\ep} ,\mu_{\ep})$ 
 satisfies 
    \begin{align}
        &\langle u_{\ep}'(t), z \rangle_{V^{*}, V} + 
          \int_{\Omega} \nabla \mu_{\ep}(t) \cdot \nabla z 
          - \eta\int_{\Omega} 
                             u_{\ep}(t)\nabla(1-\Delta)^{-1}u_{\ep}(t) \cdot \nabla z  = 0 
         \notag \\[2mm] 
         &\hspace{81mm} \mbox{for all}\ z \in V\ 
          \mbox{and a.a}.\ t\in(0, T), \label{de4}
     \\[4mm]
        & \mu_{\ep} = -\ep\Delta u_{\ep} + \beta(u_{\ep}) + \pi_{\ep}(u_{\ep}) - f  
        \quad \mbox{a.e.\ on}\ \Omega\times(0, T), \label{de5}
     \\[4mm]
        & u_{\ep}(0) = u_{0\ep} 
        \quad \mbox{a.e.\ on}\ \Omega. \label{de6}
     \end{align}
 \end{df}

Now the main results read as follows.
\begin{thm}\label{maintheorem1}
Assume {\rm (A1)-(A5)}. Then for all $\ep\in(0, 1]$                         
there exists a weak solution $(u_{\ep}, \mu_{\ep})$ of 
{\rm \ref{Pep}}.  
In addition, there exists a constant $M>0$, depending only on the data, 
such that 
     \begin{align}
     &\ep \|u_{\ep}(t)\|_{V}^2 + \|u_{\ep}(t)\|_{L^4(\Omega)}^4 \leq M, \label{epes1} 
     \\[2mm] 
     &\int_{0}^{t}\|u_{\ep}'(s)\|_{V^*}^2\,ds \leq M,  \label{epes2} 
     \\[2mm] 
     &\ep^2\int_{0}^{t}\|u_{\ep}(s)\|_{W}^2\,ds \leq M, \label{epes3} 
     \\[2mm] 
     &\int_{0}^{t}\|\mu_{\ep}(s)\|_{V}^2\,ds 
                    + \int_{0}^{t}\|\beta(u_{\ep}(s))\|_{H}^2\,ds \leq M \label{epes4}
     \end{align}
for all $t\in[0, T]$ and all $\ep\in(0, 1]$. 
\end{thm}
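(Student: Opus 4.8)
The strategy follows the Colli--Fukao scheme recalled in the introduction: I would not attack \ref{Pep} directly but introduce a further Yosida-type regularization \ref{Pep}$_\lambda$ in which $\beta$ is replaced by $\beta_\lambda$ and a viscosity term $\lambda (u_{\ep,\lambda})_t$ is added to the equation for $\mu_{\ep,\lambda}$, exactly as in the passage from \ref{Eep} to \ref{Eeplamb}. For fixed $\ep,\lambda>0$ one can then recast the problem as a doubly nonlinear evolution inclusion in $V_0^*$ of the form $\mathcal{A} u' + \partial\psi(u) \ni k$, where $\mathcal{A}$ encodes $\mathcal{N}$ together with the $\lambda$- and $\ep$-viscosity contributions and $\psi$ is built from $\hat\beta$ plus the Dirichlet energy; the chemotaxis term $-\eta\nabla\cdot(u\nabla(1-\Delta)^{-1}u)$ enters through a perturbation $k(t)$ that depends on $u$ and has to be handled by a fixed-point argument (e.g. Schauder on a ball in $C([0,T];H)$), using that $v_{\ep}=(1-\Delta)^{-1}u_{\ep}$ gains two derivatives and that $u \mapsto u\nabla v$ is locally Lipschitz from $H$ (in fact from $L^4$) into $V^*$ in dimension $d\le 3$. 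Invoking the Colli--Visintin theorem gives a solution at level $(\ep,\lambda)$.

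Next I would derive the $\lambda$-independent a priori estimates at level $\ep$. Testing the first equation (after inverting $-\Delta$ on $V_0$) by $u_{\ep,\lambda}$ and the second by $(u_{\ep,\lambda})_t$, adding, and using (A1)--(A2) produces, after absorbing the chemotaxis term via Young's inequality and the $H^2$-regularity of $v_{\ep,\lambda}$, the bounds $\ep\|u_{\ep,\lambda}(t)\|_V^2 + \|u_{\ep,\lambda}(t)\|_{L^4}^4 \le C$ and $\lambda\int_0^t\|(u_{\ep,\lambda})_t\|_H^2 + \int_0^t\|u_{\ep,\lambda}\|_{V_0}^2 \le C$ uniformly in $\lambda$; here the mean-value constraint $\frac1{|\Omega|}\int u_{\ep,\lambda}=m_0$ from (A5) together with $m_0\in\operatorname{Int}D(\beta)$ is what lets one control $\int_\Omega \beta(u_{\ep,\lambda})$ and hence $\|\mu_{\ep,\lambda}\|_V$ and $\|\beta_\lambda(u_{\ep,\lambda})\|_H$, and then $\ep^2\|u_{\ep,\lambda}\|_W^2$ by elliptic regularity applied to $-\ep\Delta u_{\ep,\lambda}=\mu_{\ep,\lambda}-\beta_\lambda(u_{\ep,\lambda})-\pi_{\ep}(u_{\ep,\lambda})+f$. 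The bound on $\|(u_{\ep,\lambda})_t\|_{V^*}$ (equivalently on $\|u_{\ep,\lambda}'\|_{V_0^*}$) follows by comparison in the first equation. Passing $\lambda\searrow0$ by weak/weak-$*$ compactness, Aubin--Lions for the strong $H$- (and $L^4$-, via Gagliardo--Nirenberg) convergence of $u_{\ep,\lambda}$, and maximal monotonicity of $\beta$ to identify the limit of $\beta_\lambda(u_{\ep,\lambda})$, yields a weak solution $(u_\ep,\mu_\ep)$ of \ref{Pep}; the estimates \eqref{epes1}--\eqref{epes4} survive the limit by weak lower semicontinuity, with $M$ inherited from the constant $C$ above, which by construction depends only on $c_1,\dots,c_4$, $\|g\|_{L^2(0,T;H)}$, $\eta$, $T$ and $\Omega$.

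The main obstacle is the chemotaxis coupling $\eta\nabla\cdot(u\nabla v)$: it is not monotone, not globally Lipschitz, and it is where the exponent $4$ in (A2)/(A5) is genuinely used. The delicate point is to show that its contribution to the energy estimate can be absorbed into $c_1\|u\|_{L^4}^4$ (or into $\ep\|\nabla u\|_H^2$ and $\|\nabla\mu\|_H^2$) rather than merely bounded by a Gronwall-type term that would blow up — this requires estimating $\big|\eta\int_\Omega u\,\nabla v\cdot\nabla z\big|$ with $z$ the relevant test function by exploiting $\|v\|_{H^2}\le C\|u\|_H$, the embedding $H^2(\Omega)\hookrightarrow W^{1,\infty}(\Omega)$ (borderline at $d=3$, so one uses $H^2\hookrightarrow W^{1,6}$ and Hölder with the $L^4$ and $L^{12/5}$ norms of $u$) and then Young's inequality with a small parameter. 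The fixed-point step for existence at level $(\ep,\lambda)$ is the second-hardest point, but it is standard once the a priori estimate is in hand, since the estimate provides the invariant ball and the compactness needed for continuity of the solution map.
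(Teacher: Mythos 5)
Your plan is sound in outline and reaches the same estimates \eqref{epes1}--\eqref{epes4}, but it takes a genuinely different route from the paper at the level where the approximate solutions are constructed. You propose to solve the $(\ep,\lambda)$-problem by invoking the Colli--Visintin abstract theory for the doubly nonlinear inclusion (with $\beta$ replaced by $\beta_\lambda$) and to handle the non-monotone chemotaxis coupling by a Schauder fixed-point argument, which requires you to additionally establish well-definedness (uniqueness for the frozen-coefficient problem), continuity and compactness of the solution map. The paper instead keeps $\beta$ itself in \ref{Peplam} and constructs its solutions by an implicit time discretization (\ref{Plamn}, \ref{Ph}, following \cite{CK2}) in which the chemotaxis term is evaluated at the \emph{previous} time step $u_{\lambda,n}$; this linearizes the coupling, so each discrete problem reduces to a single monotone elliptic equation solved by surjectivity of a monotone coercive operator plus elliptic regularity (Lemma \ref{elliptic1}, with the Yosida approximation used only at that elliptic level), and no fixed point is needed. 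The price is the discrete bookkeeping (interpolants $\hat u_h,\overline u_h,\underline u_h$, a discrete energy estimate, the discrete Gronwall lemma, and the auxiliary $L^1$-bound on $\beta(\overline u_h)$ via $m_0\in\mathrm{Int}\,D(\beta)$), after which one passes $h\searrow0$ (Lemma \ref{existenceforPeplam}) and then $\lambda\searrow0$ exactly as in your last step; the bounds in Lemmas \ref{timedisces1}--\ref{timedisces6} are uniform in $\ep$, $\lambda$ and $h$, so \eqref{epes1}--\eqref{epes4} follow by weak lower semicontinuity, as in your proposal.

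One point in your discussion of the ``main obstacle'' is off the mark: the chemotaxis contribution cannot be fully absorbed into the left-hand side of the energy inequality, and there is no need to avoid a Gronwall-type term. After Young's inequality one is left with a term of size $C\|u\|_{L^4(\Omega)}^4$ (via $\|\nabla(1-\Delta)^{-1}u\|_{L^4(\Omega)}\le C\|u\|_{L^4(\Omega)}$), which sits on the right-hand side and is exactly what the (discrete) Gronwall lemma is applied to in the paper, using (A2) so that $\int_\Omega\hat\beta(u)$ on the left controls $\|u\|_{L^4(\Omega)}^4$; on the finite interval $[0,T]$ this yields a constant depending only on the data, so nothing ``blows up''. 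Your own $\lambda$-uniform estimates would have to be closed in the same Young-plus-Gronwall way, so this is a misdiagnosis of the difficulty rather than a flaw in the overall strategy.
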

%
%
\begin{thm}\label{maintheorem2}
Assume {\rm (A1)-(A5)}. 
Then there exists a weak solution $(u, \mu)$ of {\rm \ref{P}}. 
\end{thm}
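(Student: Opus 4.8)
The plan is to construct $(u,\mu)$ as the limit, as $\ep\searrow 0$, of the weak solutions $(u_\ep,\mu_\ep)$ of \ref{Pep} supplied by Theorem~\ref{maintheorem1}, exploiting the $\ep$-independent bounds \eqref{epes1}--\eqref{epes4}. First I would pass to a subsequence (not relabelled) along which $u_\ep\to u$ weakly-$*$ in $L^\infty(0,T;L^4(\Omega))$ and weakly in $H^1(0,T;V^*)$, $\mu_\ep\to\mu$ weakly in $L^2(0,T;V)$, and $\beta(u_\ep)\to\xi$ weakly in $L^2(0,T;H)$ for some $\xi\in L^2(0,T;H)$. From \eqref{epes1} one has $\ep u_\ep\to 0$ in $L^\infty(0,T;V)$, and from \eqref{epes3} that $\ep\Delta u_\ep$ is bounded in $L^2(0,T;H)$; since $\ep u_\ep\to 0$ also in $L^2(0,T;V)$ it follows that $\ep\Delta u_\ep\to 0$ weakly in $L^2(0,T;H)$, while (A4) gives $\pi_\ep(u_\ep)\to 0$ in $L^2(0,T;H)$. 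Finally, because $L^4(\Omega)$ is compactly embedded in $V^*$ and $u_\ep'$ is bounded in $L^2(0,T;V^*)$, the Aubin--Lions--Simon lemma yields $u_\ep\to u$ strongly in $C([0,T];V^*)$; together with $u_{0\ep}\to u_0$ in $L^4(\Omega)$ (see (A5)) this gives \eqref{de3}.

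Next I would handle the chemotaxis coupling. Setting $v_\ep:=(1-\Delta)^{-1}u_\ep$, elliptic regularity and the bound of $u_\ep$ in $L^\infty(0,T;L^2(\Omega))$ give $v_\ep$ bounded in $L^\infty(0,T;W)$, while $v_\ep'=(1-\Delta)^{-1}u_\ep'$ is bounded in $L^2(0,T;V)$; hence Aubin--Lions gives $v_\ep\to v$ strongly in $C([0,T];V)$, and the weak convergence of $u_\ep$ identifies $v=(1-\Delta)^{-1}u$, so that $\nabla v_\ep\to\nabla v$ strongly in $C([0,T];H)$. I would then multiply \eqref{de4} by an arbitrary $\varphi\in C^\infty_c(0,T)$, integrate in time, and pass to the limit term by term for test functions $z\in W$: all linear terms pass by the weak convergences above, and for the chemotaxis term I would write $u_\ep\nabla v_\ep-u\nabla v=u_\ep(\nabla v_\ep-\nabla v)+(u_\ep-u)\nabla v$, estimating the first part by $\|u_\ep\|_{L^4(\Omega)}\|\nabla v_\ep-\nabla v\|_{L^{12/7}(\Omega)}\|\nabla z\|_{L^6(\Omega)}\to 0$ and noting that the second part tends to $0$ since $u_\ep-u\to 0$ weakly in $L^2(\Omega\times(0,T))$ while $\varphi\,\nabla v\cdot\nabla z$ is a fixed element of that space. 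Since every term extends continuously from $z\in W$ to $z\in V$ (using $u\in L^\infty(0,T;L^4(\Omega))$ and $\nabla v\in L^\infty(0,T;L^6(\Omega))$), \eqref{de1} then holds for all $z\in V$.

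It remains to verify \eqref{de2}. Taking the weak $L^2(0,T;H)$ limit in \eqref{de5} and using $\ep\Delta u_\ep\to 0$ and $\pi_\ep(u_\ep)\to 0$ yields $\mu=\xi-f$, so it suffices to show $\xi=\beta(u)$. For this I would use the monotonicity trick: testing \eqref{de5} by $u_\ep$ gives
\[
\int_0^T\!\!\!\int_\Omega \beta(u_\ep)u_\ep = \int_0^T\!\!\!\int_\Omega \mu_\ep u_\ep\;-\;\ep\!\int_0^T\!\!\!\int_\Omega|\nabla u_\ep|^2\;-\;\int_0^T\!\!\!\int_\Omega\pi_\ep(u_\ep)u_\ep\;+\;\int_0^T\!\!\!\int_\Omega f u_\ep,
\]
where the second term on the right is $\le 0$, the last two converge to $0$ and to $\int_0^T\!\int_\Omega fu$, and $\int_0^T\!\int_\Omega\mu_\ep u_\ep=\int_0^T\langle u_\ep,\mu_\ep\rangle_{V^*,V}\to\int_0^T\langle u,\mu\rangle_{V^*,V}=\int_0^T\!\int_\Omega\mu u$ because $\mu_\ep\to\mu$ weakly in $L^2(0,T;V)$ while $u_\ep\to u$ strongly in $L^2(0,T;V^*)$. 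Hence $\limsup_{\ep\searrow 0}\int_0^T\!\int_\Omega\beta(u_\ep)u_\ep\le\int_0^T\!\int_\Omega(\mu+f)u=\int_0^T\!\int_\Omega\xi u$. Since by (A1) the realization of $\beta$ on $L^2(\Omega\times(0,T))$ is maximal monotone (being the subdifferential of $w\mapsto\int_0^T\!\int_\Omega\hat\beta(w)$), the weak--weak closedness of maximal monotone operators together with this limsup inequality forces $\xi=\beta(u)$ a.e., i.e.\ \eqref{de2}. Collecting \eqref{de1}--\eqref{de3} and the regularity of $u$ and $\mu$ inherited from \eqref{epes1}, \eqref{epes2}, \eqref{epes4} then shows that $(u,\mu)$ is a weak solution of \ref{P}.

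The main obstacle is expected to be the two nonlinear limits. The chemotaxis term $u_\ep\nabla v_\ep$ forces one to pair the merely weak compactness of $\{u_\ep\}$ against a genuinely strong convergence of $\nabla v_\ep$, which is why the resolvent smoothing together with a separate Aubin--Lions argument for $v_\ep$ is needed. The identification $\xi=\beta(u)$ hinges on the limit $\int\!\int\mu_\ep u_\ep\to\int\!\int\mu u$, which is available precisely because $u_\ep$ converges strongly in $L^2(0,T;V^*)$ while $\mu_\ep$ is only bounded in $L^2(0,T;V)$, so the product survives as a $V^*$--$V$ duality pairing; by contrast, the genuinely $\ep$-dependent terms $\ep\Delta u_\ep$ and $\pi_\ep(u_\ep)$ are harmless, being controlled by \eqref{epes1}, \eqref{epes3} and (A4).
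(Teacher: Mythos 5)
Your proposal is correct and follows essentially the same route as the paper: pass to the limit $\ep\searrow0$ using the uniform bounds \eqref{epes1}--\eqref{epes4}, get $u_\ep\to u$ strongly in $C([0,T];V^{*})$ and strong convergence of $\nabla(1-\Delta)^{-1}u_\ep$ via Aubin--Lions to handle the chemotaxis term, and identify $\xi=\beta(u)$ by the same $\limsup$/maximal-monotonicity argument (testing \eqref{de5} by $u_\ep$, discarding $-\ep\|\nabla u_\ep\|_H^2\le0$, and pairing weak convergence of $\mu_\ep$ in $L^2(0,T;V)$ with strong convergence of $u_\ep$ in $L^2(0,T;V^{*})$). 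The only cosmetic difference is that the paper gets $\nabla(1-\Delta)^{-1}u_\ep$ strongly in $C([0,T];L^4(\Omega))$ via $W^{2,4}(\Omega)\hookrightarrow W^{1,4}(\Omega)$ and so tests directly with $z\in V$, whereas you work at the $L^2$ level for $\nabla v_\ep$ and recover $z\in V$ by a density step.
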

\begin{remark}
In the case that $\beta(r) = r$ and $d=1$ 
Osaki--Yagi \cite{Osaki-Yagi} 
established existence of a finite-dimensional attractor
and proved  
that global existence and boundedness hold for all smooth initial data,   
which implies that blow-up solutions do not exist in the 1-dimensional setting.  
In the case that $\beta$ is nonlinear 
Marinoschi \cite{M-2013} proved local existence of solutions to \ref{P} for unbounded initial data by applying the nonlinear semigroup theory. 
Moreover, Yokota--Yoshino \cite{YY2016} 
established not only local but also global existence of solutions to \ref{P} 
for unbounded initial data 
by improving the method used in \cite{M-2013},  
while this paper derives global existence of solutions to \ref{P} 
for unbounded initial data
by a Cahn--Hilliard approach. 
\end{remark}

This paper is organized as follows. 
Section \ref{Sec2} considers a suitable approximation of \ref{Pep}  
in terms of a parameter $\lambda > 0$ and introduces a time discretization scheme  
in reference to \cite{CK2}.   
In Section \ref{Sec3} 
we establish existence for the discrete problem. 
In Section \ref{Sec4} we derive uniform estimates for the time discrete solutions 
and show existence for the approximation of \ref{Pep} 
by passing to the limit as the time step tends to zero. 
Finally, in Section \ref{Sec5} 
we prove existence for \ref{Pep} and \ref{P}  
by taking the limit in the approximation of \ref{Pep} as $\lambda\searrow0$ 
and in \ref{Pep} as $\ep\searrow0$ individually.

\vspace{10pt}

\section{Approximate problems and preliminaries}\label{Sec2}

We consider the approximation 
\begin{equation}\label{Peplam}\tag*{(P)$_{\ep, \lambda}$}
\begin{cases}
(u_{\ep, \lambda})_t - \Delta \mu_{\ep, \lambda} 
                   + \eta\nabla\cdot(u_{\ep, \lambda} \nabla v_{\ep, \lambda}) = 0 
&\mbox{in}\ \Omega \times (0, T), \\[2mm] 
\mu_{\ep, \lambda} = \lambda(u_{\ep, \lambda})_t - \ep\Delta u_{\ep, \lambda} 
                                  + \beta(u_{\ep, \lambda}) + \pi_{\ep}(u_{\ep, \lambda}) - f 
&\mbox{in}\ \Omega \times (0, T), \\[2mm]
0 = \Delta v_{\ep, \lambda} - v_{\ep, \lambda} + u_{\ep, \lambda} 
&\mbox{in}\ \Omega \times (0, T), \\[2mm]
\partial_{\nu} \mu_{\ep, \lambda} = \partial_{\nu} u_{\ep, \lambda} 
                                                          = \partial_{\nu} v_{\ep, \lambda} = 0 
&\mbox{on}\ \partial\Omega \times (0, T), \\[2mm] 
u_{\ep, \lambda} (0) = u_{0\ep} &\mbox{in}\ \Omega,  
\end{cases}
\end{equation}
where $\lambda \in (0, \ep)$. 
The definition of weak solutions to \ref{Peplam} is as follows. 
\begin{df}       
 A pair $(u_{\ep, \lambda}, \mu_{\ep, \lambda})$ with 
    \begin{align*}
    &u_{\ep, \lambda}\in H^1(0, T; H)\cap L^{\infty}(0, T; V)\cap 
    L^2(0, T; W), 
    \\
    &\mu_{\ep, \lambda}\in L^2(0, T; V)
    \end{align*}
 is called a {\it weak solution} of \ref{Peplam} if 
 $(u_{\ep, \lambda} ,\mu_{\ep, \lambda})$ 
 satisfies 
   \begin{align}
    &(u_{\ep, \lambda}'(t), z)_{H} 
    + \int_{\Omega} \nabla \mu_{\ep, \lambda}(t) \cdot \nabla z 
    - \eta\int_{\Omega} 
         u_{\ep, \lambda}(t)\nabla(1-\Delta)^{-1}u_{\ep, \lambda}(t) \cdot \nabla z  = 0 
         \notag \\[2mm] 
         &\hspace{81mm} \mbox{for all}\ z \in V\ 
          \mbox{and a.a}.\ t\in(0, T), \label{de7}
     \\[4mm]
        & \mu_{\ep, \lambda} 
           = \lambda u_{\ep, \lambda}' 
               -\ep\Delta u_{\ep, \lambda} + \beta(u_{\ep, \lambda}) 
               + \pi_{\ep}(u_{\ep, \lambda}) - f  
        \quad \mbox{a.e.\ on}\ \Omega\times(0, T), \label{de8}
     \\[4mm]
        & u_{\ep, \lambda}(0) = u_{0\ep} 
        \quad \mbox{a.e.\ on}\ \Omega. \label{de9}
   \end{align}
 \end{df}
\medskip

\noindent Moreover, to prove existence of weak solutions to \ref{Peplam} 
we employ a time discretization scheme. 
More precisely, 
in reference to \cite{CK2}, 
we will deal with the following problem: 
find $(u_{\lambda, n+1}, \mu_{\lambda, n+1}) \in W \times W$ such that  
\begin{equation*}\tag*{(P)$_{\lambda, n}$}\label{Plamn}
     \begin{cases}
        \delta_{h}u_{\lambda, n} 
         + h \delta_{h}\mu_{\lambda, n} - \Delta \mu_{\lambda, n+1} 
         + \eta\nabla\cdot(u_{\lambda, n}\nabla(1-\Delta)^{-1}u_{\lambda, n})  
         = 0  
         & \mbox{in}\ \Omega, 
 \\[2mm]
         \mu_{\lambda, n+1} = 
         \lambda\delta_{h}u_{\lambda, n} - \ep\Delta u_{\lambda, n+1} 
         + \beta (u_{\lambda, n+1}) + \pi_{\ep}(u_{\lambda, n+1}) - f_{n+1}    
         & \mbox{in}\ \Omega, 
 \\[2mm]
         \partial_{\nu}\mu_{\lambda, n+1} = \partial_{\nu}u_{\lambda, n+1} = 0                                   
         & \mbox{on}\ \partial\Omega 
     \end{cases}
\end{equation*}
for $n=0, ... , N-1$, where $h=\frac{T}{N}$, $N \in \mathbb{N}$, 
\begin{align*}
u_{\lambda, 0} := u_{0\ep}, \quad 
\mu_{\lambda, 0} := 0, \quad 
\delta_{h}u_{\lambda, n} := \frac{u_{\lambda, n+1}-u_{\lambda, n}}{h}, \quad 
\delta_{h}\mu_{\lambda, n} 
:= \frac{\mu_{\lambda, n+1} - \mu_{\lambda, n}}{h},  
\end{align*}
and $f_{k} := \frac{1}{h}\int_{(k-1)h}^{kh} f(s)\,ds$  
for $k = 1, ... , N$. 
Also, putting 
\begin{align}
&\hat{u}_{h}(0) := u_{\lambda, 0} = u_{0\ep},\ 
(\hat{u}_{h})_{t}(t) := \delta_{h}u_{\lambda, n},  \label{hat1} 
\\[2mm]  
&\hat{\mu}_{h}(0) := \mu_{\lambda, 0} = 0,\ 
(\hat{\mu}_{h})_{t}(t) := \delta_{h}\mu_{\lambda, n},  \label{hat2}
\\[2mm] 
&\overline{u}_{h} (t) := u_{\lambda, n+1},\ 
\underline{u}_{h} (t) := u_{\lambda, n},\ 
\overline{\mu}_{h} (t) := \mu_{\lambda, n+1},\ 
\overline{f}_{h}(t) := f_{n+1}  
\label{line1}   
\end{align}
for a.a.\ $t \in (nh, (n+1)h)$, $n=0, ..., N-1$, 
we can rewrite \ref{Plamn} as  
\begin{equation*}\tag*{(P)$_{h}$}\label{Ph}
     \begin{cases}
        (\hat{u}_{h})_{t} + h(\hat{\mu}_{h})_{t} - \Delta\overline{\mu}_{h} 
        + \eta\nabla\cdot(\underline{u}_{h}\nabla(1-\Delta)^{-1}\underline{u}_{h}) 
        = 0 
         & \mbox{in}\ \Omega\times(0, T), 
 \\[2mm]
         \overline{\mu}_{h} = \lambda(\hat{u}_{h})_{t}  
         - \ep\Delta\overline{u}_{h} 
         + \beta(\overline{u}_{h}) 
         + \pi_{\ep}(\overline{u}_{h}) 
         - \overline{f}_{h} 
         & \mbox{in}\ \Omega\times(0, T), 
 \\[3mm]
         \partial_{\nu}\overline{\mu}_{h} 
         = \partial_{\nu}\overline{u}_{h} = 0                                   
         & \mbox{on}\ \partial\Omega\times(0, T),
 \\[2mm]
        \hat{u}_{h}(0) = u_{0\ep},\  \hat{\mu}_{h}(0) = 0                                    
         & \mbox{in}\ \Omega.  
     \end{cases}
 \end{equation*}

\begin{remark}
On account of \eqref{hat1}-\eqref{line1}, 
the reader can check directly the following properties: 
\begin{align}
&\|\hat{u}_{h}\|_{L^{\infty}(0, T; L^4(\Omega))} 
= \max\{\|u_{0\ep}\|_{L^4(\Omega)},\ 
                                       \|\overline{u}_{h}\|_{L^{\infty}(0, T; L^4(\Omega))}\}, 
\label{rem1} 
\\[2mm] 
&\|\hat{u}_{h}\|_{L^{\infty}(0, T; V)} 
= \max\{\|u_{0\ep}\|_{V},\ \|\overline{u}_{h}\|_{L^{\infty}(0, T; V)}\}, 
\label{rem2} 
\\[2mm] 
&\|\hat{\mu}_{h}\|_{L^{\infty}(0, T; H)} = \|\overline{\mu}_{h}\|_{L^{\infty}(0, T; H)}, 
\label{rem3} 
\\[2mm] 
&\|\overline{u}_{h} - \hat{u}_{h}\|_{L^2(0, T; H)}^2 
   = \frac{h^2}{3}\|(\hat{u}_{h})_{t}\|_{L^2(0, T; H)}^2, 
\label{rem4}
\\[2mm]
&h(\hat{u}_{h})_{t} = \overline{u}_{h} - \underline{u}_{h}. \label{rem5}
\end{align}
\end{remark} 
\begin{lem}\label{existenceforPlamn}
For all $\ep \in (0, 1]$, $\lambda \in (0, \ep)$, $h \in (0, \frac{\lambda}{2c_{3}\ep})$ 
there exists a unique solution $(u_{\lambda, n+1}, \mu_{\lambda, n+1})$ 
of {\rm \ref{Plamn}} satisfying 
\[
u_{\lambda, n+1}, \mu_{\lambda, n+1} \in W \quad 
\mbox{for}\ n = 0, ..., N-1. 
\]
\end{lem}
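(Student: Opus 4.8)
The plan is to solve \ref{Plamn} by a variational/fixed-point argument in the space $V_0$, exploiting the structure of the discrete scheme. Since the problem decouples into the $v$-equation $0 = \Delta v_{\lambda,n} - v_{\lambda,n} + u_{\lambda,n}$, which for given $u_{\lambda,n}$ simply gives $v_{\lambda,n} = (1-\Delta)^{-1}u_{\lambda,n}$, the only genuinely new unknown at step $n$ is the pair $(u_{\lambda,n+1},\mu_{\lambda,n+1})$. Observe that applying $-\Delta$ with Neumann boundary conditions to the first equation shows that $\int_\Omega \delta_h u_{\lambda,n}\,dx + h\int_\Omega \delta_h\mu_{\lambda,n}\,dx = 0$, and since $\int_\Omega f_{n+1} = 0$ by (A3) and $\int_\Omega u_{0\ep} = m_0|\Omega|$ by (A5), one checks inductively that the mean of $u_{\lambda,n+1}$ is $m_0$; it is convenient to work with the variable $u_{\lambda,n+1} - m_0 \in V_0$ and use the operator ${\cal N}$ introduced in Section~\ref{Sec2} to invert the (mean-zero part of the) Laplacian.

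First I would eliminate $\mu_{\lambda,n+1}$: multiplying the first equation by ${\cal N}$ (i.e. applying the inverse Neumann Laplacian to the mean-zero parts) turns the system into a single elliptic equation for $w := u_{\lambda,n+1}$ of the form
\begin{equation*}
{\cal N}\!\left(\tfrac{1}{h}(w - u_{\lambda,n}) + \text{(known)}\right) + \lambda\tfrac{1}{h}(w - u_{\lambda,n}) - \ep\Delta w + \beta(w) + \pi_\ep(w) = \text{(known data)},
\end{equation*}
posed with homogeneous Neumann conditions, where the known terms collect $h\delta_h\mu_{\lambda,n}$, the chemotaxis term $\eta\nabla\cdot(u_{\lambda,n}\nabla(1-\Delta)^{-1}u_{\lambda,n})$ and $f_{n+1}$. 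This is the Euler--Lagrange equation of a strictly convex, coercive functional on $V$: the terms $\frac{1}{2h}\|w-m_0\|_{V_0^*}^2$ (from ${\cal N}$), $\frac{\lambda}{2h}\|w\|_H^2$, $\frac{\ep}{2}\|\nabla w\|_H^2$ and $\int_\Omega\hat\beta(w)$ are all convex, the first three coercive on $V$, while the perturbation $\pi_\ep(w)$ is a Lipschitz (hence subquadratic) lower-order term which, thanks to the smallness condition $\|\pi_\ep'\|_{L^\infty}\le c_3\ep$ in (A4) together with the restriction $h < \frac{\lambda}{2c_3\ep}$, is dominated by the $\frac{\lambda}{2h}\|w\|_H^2$ term — this is exactly what makes the functional strictly convex and coercive and where the hypothesis on $h$ is used. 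Existence and uniqueness of a minimizer $w = u_{\lambda,n+1} \in V$ then follow from the direct method and strict convexity; equivalently one can phrase this as surjectivity and injectivity of a strongly monotone, coercive operator $V \to V^*$. From the equation $\mu_{\lambda,n+1} = \lambda\delta_h u_{\lambda,n} - \ep\Delta u_{\lambda,n+1} + \beta(u_{\lambda,n+1}) + \pi_\ep(u_{\lambda,n+1}) - f_{n+1}$ one recovers $\mu_{\lambda,n+1}$, and then elliptic regularity for the Neumann problem promotes both $u_{\lambda,n+1}$ and $\mu_{\lambda,n+1}$ to $W = \{z\in H^2(\Omega): \partial_\nu z = 0\}$, using $d\le 3$ so that $\beta(u_{\lambda,n+1}) \in H$ (here (A2) and the $L^4$-bound give $\beta(w)\in L^{4/3}$, and a bootstrap using $w\in H^1\hookrightarrow L^6$ with the growth of $\beta$ compatible with the examples yields $\beta(w)\in H$; more carefully one argues this from the minimization that already forces $\hat\beta(w)\in L^1$ and the $V$-regularity).

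The argument then closes by induction on $n$: given $u_{\lambda,n},\mu_{\lambda,n}\in W$, the above produces $u_{\lambda,n+1},\mu_{\lambda,n+1}\in W$, and the base case $u_{\lambda,0}=u_{0\ep}\in H^2(\Omega)$, $\mu_{\lambda,0}=0$ is covered by (A5). The main obstacle is the chemotaxis coupling: the term $\eta\nabla\cdot(u_{\lambda,n}\nabla(1-\Delta)^{-1}u_{\lambda,n})$ is genuinely nonlinear, but because it is evaluated at the \emph{previous} time level $u_{\lambda,n}$ it is merely a fixed datum at step $n+1$, so no fixed-point iteration on the chemotaxis term is needed — one only needs $u_{\lambda,n}\in L^4(\Omega)$ (so that $u_{\lambda,n}\nabla(1-\Delta)^{-1}u_{\lambda,n}\in L^2(\Omega)^d$ by elliptic regularity $(1-\Delta)^{-1}: H\to H^2$ and $d\le 3$), which is guaranteed inductively. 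The second delicate point is verifying that the smallness of $\pi_\ep$ relative to $h$ really yields strict convexity rather than merely monotonicity plus compactness; this is a direct estimate: for the difference of two candidate solutions, the $\pi_\ep$ contribution is bounded by $c_3\ep\|w_1-w_2\|_H^2$ while the $\frac{\lambda}{h}\|w_1-w_2\|_H^2$ term dominates it precisely when $h<\frac{\lambda}{2c_3\ep}$, leaving a strictly positive coercive remainder. Everything else — elliptic regularity, the $H^2$-bootstrap, the mean-value bookkeeping — is routine.
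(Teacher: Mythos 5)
Your overall plan---eliminate $\mu_{\lambda,n+1}$, solve a single monotone/convex elliptic problem for $u_{\lambda,n+1}$, recover $\mu_{\lambda,n+1}$, bootstrap to $W$, and induct on $n$---is the right shape, but the elimination step as you describe it does not go through. In \ref{Plamn} the term $h\delta_{h}\mu_{\lambda,n}=\mu_{\lambda,n+1}-\mu_{\lambda,n}$ contains the \emph{unknown} $\mu_{\lambda,n+1}$, so it cannot be placed among the ``known terms''; moreover, integrating the first equation only yields conservation of $\int_{\Omega}(u_{\lambda,n+1}+h\mu_{\lambda,n+1})$ (this is exactly \eqref{Plamnhozon}), not of $\int_{\Omega}u_{\lambda,n+1}$, so your claim that the mean of $u_{\lambda,n+1}$ equals $m_{0}$ is false and $u_{\lambda,n+1}-m_{0}$ need not lie in $V_{0}$; hence the proposed ${\cal N}$-based reduction on the mean-zero space collapses. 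The whole point of the stabilizing term $h\delta_{h}\mu_{\lambda,n}$ in the scheme is that the first equation can be rewritten as $(1-\Delta)\mu_{\lambda,n+1}=\mu_{\lambda,n}-\delta_{h}u_{\lambda,n}-\eta\nabla\cdot(u_{\lambda,n}\nabla(1-\Delta)^{-1}u_{\lambda,n})$, so $\mu_{\lambda,n+1}$ is eliminated by inverting $1-\Delta$ on all of $H$, with no mean-zero bookkeeping; this produces the paper's reduced problem \ref{Qlamn}, namely $(\lambda+(1-\Delta)^{-1})u_{\lambda,n+1}-\ep h\Delta u_{\lambda,n+1}+h\beta(u_{\lambda,n+1})+h\pi_{\ep}(u_{\lambda,n+1})=g$ with $g\in H$ known, which is solved in Lemma \ref{elliptic1}. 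Your remaining structural observations (the chemotaxis term is explicit data from the previous level; the restriction $h<\frac{\lambda}{2c_{3}\ep}$ lets the $\lambda$-term dominate the anti-monotone $\pi_{\ep}$, giving strong monotonicity/uniqueness) are correct and coincide with the paper's use of them, but only after the reduction is repaired.

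The second genuine gap is the regularity claim $\beta(u_{\lambda,n+1})\in H$ and the promotion of $u_{\lambda,n+1}$ (hence $\mu_{\lambda,n+1}$) to $W$. Assumptions (A1)--(A2) impose no polynomial growth restriction on $\beta$: for $\beta_{1}(r)=r|r|^{m-1}$ with $m>3$, $w\in H^{1}\hookrightarrow L^{6}$ gives only $\beta_{1}(w)\in L^{6/m}$, and $\beta_{2},\beta_{3}$ blow up at $\pm1$; likewise $\hat{\beta}(w)\in L^{1}(\Omega)$ together with $w\in V$ does not imply $\beta(w)\in L^{2}(\Omega)$. So neither your bootstrap nor the appeal to the minimization supplies the needed estimate, and in the direct-method formulation the minimizer a priori satisfies only a subdifferential inclusion, not the strong equation. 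The paper closes this by regularizing: it replaces $\beta$ by its Yosida approximation $\beta_{\tau}$, solves the regularized equation by monotone operator theory plus elliptic regularity, tests with $\beta_{\tau}(u_{\tau})$ and invokes Lemma \ref{PT} (Okazawa's inequality $(-\Delta u,\beta_{\tau}(u))_{H}\geq 0$) to get an $H$-bound on $\beta_{\tau}(u_{\tau})$ uniform in $\tau$, deduces a uniform $W$-bound, and then passes $\tau\searrow 0$, identifying the limit $\xi=\beta(u)$ by the standard monotonicity argument. Some regularization of this kind, together with that inequality, is the missing ingredient in your proposal; without it the conclusion $u_{\lambda,n+1},\mu_{\lambda,n+1}\in W$ is not established.
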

\begin{lem}\label{existenceforPeplam}
For all $\ep \in (0, 1]$ and all $\lambda \in (0, \ep)$ 
there exists a weak solution of {\rm \ref{Peplam}}. 
\end{lem}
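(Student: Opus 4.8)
The plan is to fix $\ep\in(0,1]$ and $\lambda\in(0,\ep)$, to restrict the time step to $h=T/N\in\bigl(0,\tfrac{\lambda}{2c_{3}\ep}\bigr)$ as in Lemma \ref{existenceforPlamn}, and to obtain a weak solution of \ref{Peplam} as a cluster point, along $h\searrow0$, of the interpolants $\overline{u}_{h},\underline{u}_{h},\hat{u}_{h},\overline{\mu}_{h},\hat{\mu}_{h}$ built from the solution of \ref{Plamn}. The first step is a family of $h$‑independent a priori estimates. Testing the first line of \ref{Plamn} by $\mu_{\lambda,n+1}$ and the second by $\delta_{h}u_{\lambda,n}$, adding, and summing over $n$: the term $(\mu_{\lambda,n+1},h\delta_{h}\mu_{\lambda,n})_{H}$ telescopes (using $\mu_{\lambda,0}=0$), the term $-\ep(\Delta u_{\lambda,n+1},\delta_{h}u_{\lambda,n})_{H}$ produces $\tfrac{\ep}{2}\delta_{h}\|\nabla u_{\lambda,n}\|_{H}^{2}$ up to a nonnegative remainder, the term $(\beta(u_{\lambda,n+1}),\delta_{h}u_{\lambda,n})_{H}$ produces $\delta_{h}\hat\beta(u_{\lambda,n})$ up to a nonnegative remainder by convexity of $\hat\beta$, and $(\lambda\delta_{h}u_{\lambda,n},\delta_{h}u_{\lambda,n})_{H}=\lambda\|\delta_{h}u_{\lambda,n}\|_{H}^{2}$; the $\pi_{\ep}$, $f_{n+1}$ and chemotaxis contributions are bounded below using (A2), (A4), the Sobolev embedding $V\hookrightarrow L^{4}(\Omega)$ (valid in $d\le3$), elliptic regularity of $(1-\Delta)^{-1}$, and Young's inequality. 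A discrete Gronwall argument, together with the hypotheses on $u_{0\ep}$ in (A5), then yields $h$‑independent bounds for $\hat{u}_{h}$ in $H^{1}(0,T;H)\cap L^{\infty}(0,T;V)\cap L^{\infty}(0,T;L^{4}(\Omega))$ and for $\overline{\mu}_{h}$ in $L^{\infty}(0,T;H)$ with $\nabla\overline{\mu}_{h}$ in $L^{2}(0,T;H)$; by \eqref{rem1}--\eqref{rem5} the same bounds pass to $\overline{u}_{h},\underline{u}_{h}$, and $\|h(\hat{\mu}_{h})_{t}\|_{L^{2}(0,T;H)}^{2}\le Ch$. Finally, testing the second line of \ref{Plamn} by $\beta(u_{\lambda,n+1})$ (with $-\ep(\Delta u_{\lambda,n+1},\beta(u_{\lambda,n+1}))_{H}\ge0$ by monotonicity of $\beta$) bounds $\beta(\overline{u}_{h})$ in $L^{2}(0,T;H)$, and solving that line for $\ep\Delta u_{\lambda,n+1}$ then bounds $\overline{u}_{h}$ in $L^{2}(0,T;W)$ (with a constant depending on $\ep$, which is admissible here).

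With these bounds and the Aubin--Lions--Simon lemma, a subsequence satisfies $\hat{u}_{h}\rightharpoonup u_{\ep,\lambda}$ weakly in $H^{1}(0,T;H)$ and weakly-$*$ in $L^{\infty}(0,T;V)$, $\overline{u}_{h}\rightharpoonup u_{\ep,\lambda}$ weakly in $L^{2}(0,T;W)$, $\overline{\mu}_{h}\rightharpoonup\mu_{\ep,\lambda}$ weakly in $L^{2}(0,T;V)$, $h(\hat{\mu}_{h})_{t}\to0$ strongly in $L^{2}(0,T;H)$, and $\hat{u}_{h}\to u_{\ep,\lambda}$ strongly in $C([0,T];L^{4}(\Omega))$ (since $V\hookrightarrow\hookrightarrow L^{4}(\Omega)$ in $d\le3$); since $\overline{u}_{h}-\hat{u}_{h}\to0$ in $L^{2}(0,T;H)$ and is bounded in $L^{\infty}(0,T;V)$, interpolation gives $\overline{u}_{h},\underline{u}_{h}\to u_{\ep,\lambda}$ strongly in $L^{2}(0,T;L^{4}(\Omega))$, hence in $L^{2}(0,T;H)$. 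Strong convergence identifies the nonlinear terms: $\pi_{\ep}(\overline{u}_{h})\to\pi_{\ep}(u_{\ep,\lambda})$ in $L^{2}(0,T;H)$ by Lipschitz continuity; $\beta(\overline{u}_{h})\rightharpoonup\beta(u_{\ep,\lambda})$ in $L^{2}(0,T;H)$ by maximal monotonicity (passing to the limit in $\int_{0}^{T}\!\!\int_{\Omega}\beta(\overline{u}_{h})\overline{u}_{h}$, or by demiclosedness of the realization of $\beta$ on $L^{2}(\Omega\times(0,T))$); and $\underline{u}_{h}\nabla(1-\Delta)^{-1}\underline{u}_{h}\to u_{\ep,\lambda}\nabla(1-\Delta)^{-1}u_{\ep,\lambda}$ in $L^{2}(0,T;H)$, because $(1-\Delta)^{-1}\colon H\to W$ is continuous (so $\nabla(1-\Delta)^{-1}\underline{u}_{h}\to\nabla(1-\Delta)^{-1}u_{\ep,\lambda}$ strongly in $L^{2}(0,T;L^{4}(\Omega))$ in $d\le3$) while $\underline{u}_{h}$ is bounded in $L^{\infty}(0,T;L^{4}(\Omega))$ and converges strongly in $L^{2}(0,T;H)$. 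Passing to the limit in every term of \ref{Ph} yields \eqref{de7}--\eqref{de8}, and \eqref{de9} follows from $\hat{u}_{h}(0)=u_{0\ep}$ and $\hat{u}_{h}\to u_{\ep,\lambda}$ in $C([0,T];L^{4}(\Omega))$; the regularity required in the definition of weak solution follows from weak lower semicontinuity of the relevant norms. Thus $(u_{\ep,\lambda},\mu_{\ep,\lambda})$ is the required weak solution.

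The step I expect to be the main obstacle is closing the a priori estimate against the chemotaxis drift $\eta\nabla\cdot(\underline{u}_{h}\nabla(1-\Delta)^{-1}\underline{u}_{h})$, which is neither monotone nor obviously subordinate to the dissipation. The decisive ingredient is (A2), which makes the energy coercive on $L^{4}(\Omega)$: estimating, in $d\le3$,
\begin{align*}
\Bigl|\eta\int_{\Omega}\underline{u}_{h}\,\nabla(1-\Delta)^{-1}\underline{u}_{h}\cdot\nabla\overline{\mu}_{h}\Bigr|
&\le |\eta|\,\|\underline{u}_{h}\|_{L^{4}(\Omega)}\,\|\nabla(1-\Delta)^{-1}\underline{u}_{h}\|_{L^{4}(\Omega)}\,\|\nabla\overline{\mu}_{h}\|_{H} \\
&\le C\,|\eta|\,\|\underline{u}_{h}\|_{L^{4}(\Omega)}^{2}\,\|\nabla\overline{\mu}_{h}\|_{H}
\end{align*}
(using elliptic regularity and $\|\underline{u}_{h}\|_{H}\le|\Omega|^{1/4}\|\underline{u}_{h}\|_{L^{4}(\Omega)}$), Young's inequality puts this under $\tfrac12\|\nabla\overline{\mu}_{h}\|_{H}^{2}$ plus a multiple of $\|\underline{u}_{h}\|_{L^{4}(\Omega)}^{4}$, and by (A2) the latter is controlled by $\int_{\Omega}\hat\beta(\underline{u}_{h})$ up to a constant — precisely the quantity already generated on the left-hand side by the $\beta$‑contribution — so the discrete Gronwall inequality closes the loop. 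Two minor technicalities remain: bookkeeping the discrete-in-time remainders (the $h\delta_{h}\mu_{\lambda,n}$, $h\delta_{h}u_{\lambda,n}$ and convexity/quadratic remainders), which are harmless since each carries a favourable sign or a power of $h$; and absorbing the $\pi_{\ep}$ term into the $\lambda\|\delta_{h}u_{\lambda,n}\|_{H}^{2}$ coming from the $\lambda u'$‑regularization, for which the smallness $h<\tfrac{\lambda}{2c_{3}\ep}$ already assumed in Lemma \ref{existenceforPlamn} is exactly what is needed. (The fact that $-\int_{\Omega}\beta(w)\Delta w\ge0$ for $w\in W$, used for the $L^{2}(0,T;H)$‑bound on $\beta(\overline{u}_{h})$, is standard, e.g.\ by approximating $\beta$ with its Yosida regularizations and passing to the limit.)
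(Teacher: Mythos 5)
Your overall strategy (discrete energy estimate, discrete Gronwall closed via (A2) against the chemotaxis term, Aubin--Lions--Simon compactness, identification of $\beta$ by weak--strong convergence and maximal monotonicity, and the product limit for the drift) is the same as the paper's, and those parts are sound. However, there is a genuine gap in the chain of a priori estimates: the control of the spatial mean of $\overline{\mu}_{h}$. The energy test (first equation against $h\mu_{\lambda,n+1}$, second against $u_{\lambda,n+1}-u_{\lambda,n}$) only yields $h\|\overline{\mu}_{h}\|_{L^{\infty}(0,T;H)}^{2}\le C$ and $\|\nabla\overline{\mu}_{h}\|_{L^{2}(0,T;H)}\le C$; your claimed $h$-uniform bound of $\overline{\mu}_{h}$ in $L^{\infty}(0,T;H)$ does not follow, and without the mean you do not yet have $\overline{\mu}_{h}$ bounded in $L^{2}(0,T;V)$. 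Your two subsequent steps are then circular: testing the second line by $\beta(u_{\lambda,n+1})$ requires estimating $(\mu_{\lambda,n+1},\beta(u_{\lambda,n+1}))_{H}$, i.e.\ precisely an $L^{2}$-in-time control of $\mu$ including its mean (splitting off the mean produces a term comparable to $\bigl(\int_{\Omega}\beta(u_{\lambda,n+1})\bigr)^{2}$, which cannot be absorbed); and solving the second line for $\ep\Delta u_{\lambda,n+1}$ to get the $W$-bound again presupposes $\overline{\mu}_{h}\in L^{2}(0,T;H)$. Note also that (A2) is only a lower bound on $\hat{\beta}$, so the $L^{\infty}(0,T;L^{4})$ bound on $\overline{u}_{h}$ gives no control of $\int_{\Omega}|\beta(\overline{u}_{h})|$ (think of the logarithmic examples $\beta_{2},\beta_{3}$).

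The missing ingredient is the paper's mean-value argument (Lemmas \ref{timedisces5}--\ref{timedisces6}): since $m_{0}\in\mathrm{Int}\,D(\beta)$ by (A5), one has $\beta(r)(r-m_{0})\ge C_{1}|\beta(r)|-C_{2}$; testing the second equation of \ref{Plamn} against $u_{\lambda,n+1}-m_{0}$ and using the conservation property \eqref{Plamnhozon}, which gives $\frac{1}{|\Omega|}\int_{\Omega}(u_{\lambda,j}+h\mu_{\lambda,j})=m_{0}$, one bounds $\beta(\overline{u}_{h})$ in $L^{2}(0,T;L^{1}(\Omega))$; integrating the second equation over $\Omega$ then controls the mean of $\overline{\mu}_{h}$, and Poincar\'e--Wirtinger upgrades this to $\overline{\mu}_{h}$ bounded in $L^{2}(0,T;V)$, after which comparison in the second equation yields $\beta(\overline{u}_{h})\in L^{2}(0,T;H)$. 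For the $W$-bound, the paper also avoids the circularity by testing with $-\ep\Delta u_{\lambda,n+1}$ and using the \emph{first} equation to rewrite $(\mu_{\lambda,n+1},-\Delta u_{\lambda,n+1})_{H}=(-\Delta\mu_{\lambda,n+1},u_{\lambda,n+1})_{H}$, so that only $\nabla\mu$ and time-increment terms appear (Lemma \ref{timedisces4}). Without these steps your identification of the limit equation \eqref{de8} and the weak convergence \eqref{1weak3} are not justified; with them, the rest of your argument goes through essentially as in the paper. (Your handling of the $\pi_{\ep}$ and $f$ terms with $\lambda$-dependent constants is acceptable for fixed $\ep,\lambda$, though the paper's $V_{0}^{*}$-duality version is what later gives the $\lambda,\ep$-independent bounds needed in Section \ref{Sec5}.)
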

\medskip

We provide some basic results which will be applied in this paper. 
\begin{lem}\label{PT}
 Let $\beta : \mathbb{R} \to \mathbb{R}$ 
 be a multi-valued maximal monotone function.     
 Then
    \begin{align*}
    \bigl(-\Delta u, \beta_{\tau}(u)\bigr)_{H} \geq 0 
    \quad \mbox{for all}\ 
    u\in W\ \mbox{and all}\ \tau > 0,  
    \end{align*}
 where 
 $\beta_{\tau}$ is the Yosida approximation of $\beta$ on $\mathbb{R}$. 
 In particular, if $\beta : \mathbb{R} \to \mathbb{R}$ 
 is a single-valued maximal monotone function, 
 then 
 \begin{align*}
    \bigl(-\Delta u, \beta(u)\bigr)_{H} \geq 0 
    \quad \mbox{for all}\ 
    u\in W\ \mbox{with}\ \beta(u)\in H. 
    \end{align*}
 \end{lem}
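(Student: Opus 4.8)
The plan is to prove Lemma~\ref{PT} by first establishing the inequality for the Yosida approximation $\beta_\tau$, which is smooth (in fact Lipschitz with a monotone nondecreasing $C^1$-ish structure), and then recovering the single-valued case by a limiting argument. For $u \in W$ we have $\partial_\nu u = 0$ in the trace sense, so integration by parts gives
\begin{equation*}
\bigl(-\Delta u, \beta_\tau(u)\bigr)_H
= \int_\Omega \nabla u \cdot \nabla\bigl(\beta_\tau(u)\bigr)\,dx
= \int_\Omega \beta_\tau'(u)\,|\nabla u|^2\,dx,
\end{equation*}
where the chain rule $\nabla(\beta_\tau(u)) = \beta_\tau'(u)\nabla u$ is legitimate because $\beta_\tau$ is Lipschitz continuous and $u \in H^2(\Omega) \subset H^1(\Omega)$ (Stampacchia-type chain rule for Sobolev functions composed with Lipschitz maps). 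Since $\beta_\tau$ is monotone nondecreasing, $\beta_\tau' \geq 0$ a.e., so the last integral is $\geq 0$. This proves the first assertion. The boundary term vanishes rigorously because $u \in W$ means $\partial_\nu u = 0$ and $\beta_\tau(u) \in H^1(\Omega)$, so the Green/divergence formula applies with no boundary contribution.

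For the single-valued case, suppose $\beta : \mathbb{R} \to \mathbb{R}$ is single-valued maximal monotone, $u \in W$, and $\beta(u) \in H$. One route is to note that the same integration-by-parts identity holds: for $u \in W$ with $\beta(u) \in H^1(\Omega)$ (which needs to be checked) one gets $(-\Delta u, \beta(u))_H = \int_\Omega \nabla u \cdot \nabla \beta(u)$; then using that $\beta$ is nondecreasing one shows $\nabla u \cdot \nabla \beta(u) \geq 0$ a.e.\ pointwise, because along directions where $u$ increases $\beta(u)$ increases too. More carefully, since $\beta$ is monotone, for the Sobolev function $u$ one has $\nabla(\beta(u)) = \beta'(u)\nabla u$ a.e.\ wherever $\beta$ is differentiable, and at the (measure-zero in $u$-values, hence handled via the coarea/Stampacchia lemma that $\nabla u = 0$ a.e.\ on level sets of measure-zero image) remaining points the integrand is $0$; monotonicity gives $\beta'(u) \geq 0$ where defined. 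Alternatively, and more robustly, one passes to the limit from $\beta_\tau$: as $\tau \searrow 0$, $\beta_\tau(u) \to \beta(u)$ in $H$ (since $\beta(u) \in H$ and $|\beta_\tau(r) - \beta(r)| \to 0$ pointwise with $|\beta_\tau(r)| \leq |\beta(r)|$, dominated convergence applies), so $(-\Delta u, \beta_\tau(u))_H \to (-\Delta u, \beta(u))_H$, and the inequality is preserved in the limit.

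I would present the Yosida-approximation argument as the main line, since it is the cleanest and avoids delicate chain-rule issues for the possibly non-Lipschitz single-valued $\beta$, then obtain the single-valued statement by the $\tau \searrow 0$ limit just described. The main obstacle — really the only subtle point — is justifying the chain rule / integration by parts rigorously: one must invoke that $u \in W \subset H^2(\Omega)$ together with the Lipschitz regularity of $\beta_\tau$ to get $\beta_\tau(u) \in H^1(\Omega)$ with $\nabla(\beta_\tau(u)) = \beta_\tau'(u)\nabla u$, and that $\partial_\nu u = 0$ makes the boundary integral disappear in Green's formula. Once that is in place, nonnegativity of $\beta_\tau'$ (equivalently, monotonicity of $\beta_\tau$) closes the argument immediately, and the passage to the limit in $\tau$ is a routine dominated-convergence step that inherits the inequality.
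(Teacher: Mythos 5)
Your proposal is correct, and for the second assertion it coincides with the paper: the paper also obtains the single-valued case by letting $\tau\searrow 0$ and invoking $\beta_{\tau}(u)\to\beta(u)$ in $H$ when $\beta(u)\in H$ (it cites Br\'ezis, Proposition 2.6, or Showalter, Theorem IV.1.1, for exactly the convergence you justify by dominated convergence with $|\beta_{\tau}(r)|\leq|\beta(r)|$). For the first assertion, however, you take a genuinely different route: the paper proves nothing itself but refers to Okazawa's proof of a perturbation theorem for m-accretive operators (Theorem 3 with $a=b=0$), whereas you give a self-contained argument via Green's formula for $u\in W$ (the Neumann condition killing the boundary term) together with the chain rule for the Lipschitz, nondecreasing function $\beta_{\tau}$ composed with $u\in H^2(\Omega)$, yielding $\bigl(-\Delta u,\beta_{\tau}(u)\bigr)_{H}=\int_{\Omega}\beta_{\tau}'(u)|\nabla u|^{2}\geq 0$. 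This is sound provided you are careful exactly where you say one must be: $\beta_{\tau}$ is Lipschitz but not necessarily $C^{1}$, so the identity $\nabla(\beta_{\tau}(u))=\beta_{\tau}'(u)\nabla u$ a.e.\ rests on Rademacher's theorem plus the Stampacchia-type fact that $\nabla u=0$ a.e.\ on the preimage of the (Lebesgue-null) non-differentiability set; alternatively one can mollify $\beta_{\tau}$ into a smooth nondecreasing function and pass to the limit, which avoids even that. What your approach buys is transparency and independence from the cited reference; what the paper's citation buys is brevity and the fact that Okazawa's result already packages the technical regularization, covering more general perturbations than the pure Laplacian case needed here.
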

 \begin{proof}
 It follows from 
 Okazawa \cite[Proof of Theorem 3 with $a=b=0$]{O-1983}   
 that 
 \[
 \bigl(-\Delta u, \beta_{\tau}(u)\bigr)_{H} \geq 0 
    \quad \mbox{for all}\ 
    u\in W\ \mbox{and all}\ \tau>0.
 \]
 In the case that $\beta : \mathbb{R} \to \mathbb{R}$ 
 is a single-valued maximal monotone function,  
 noting that $\beta_{\tau}(u) \to \beta(u)$ in $H$  
 as $\tau \searrow 0$ if $\beta(u) \in H$ 
 (see e.g., {\cite[Proposition 2.6]{Brezis}} 
 or \cite[Theorem IV.1.1]{S-1997}),   
 we can obtain the second inequality.     
 \end{proof}
\begin{lem}[{\cite[Section 8, Corollary 4]{Si-1987}}] \label{ALlem}                                                                   
 Assume that 
 $$
 X \hookrightarrow Z \hookrightarrow Y \ 
 \mbox{with compact embedding}\ X \hookrightarrow Z\ 
 \mbox{$($$X$, $Z$ and $Y$ are Banach spaces$)$.}
 $$
 \begin{enumerate}
 \setlength{\itemsep}{-0.5mm}
 \item[{\rm (i)}] Let $K$ be bounded in $L^p(0, T; X)$ and 
 let $\{\frac{dv}{dt}\ |\ v\in K \}$  
 be bounded in $L^1(0, T; Y)$ with some constant $1\leq p<\infty$. 
 Then $K$ is relatively compact in $L^p(0, T; Z)$. 
 \item[{\rm (ii)}] Let $K$ be bounded in $L^{\infty}(0, T; X)$ and 
 let $\{\frac{dv}{dt}\ |\ v\in K \}$   
 be bounded in $L^r(0, T; Y)$ with some constant $r>1$. 
 Then $K$ is relatively compact in $C([0, T]; Z)$. 
 \end{enumerate}
 \end{lem}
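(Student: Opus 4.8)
The plan is to reduce both parts to a single Ehrling-type interpolation inequality combined with a suitable vector-valued compactness criterion, which is essentially the route taken in \cite{Si-1987}. First I would record the interpolation inequality: since $X \hookrightarrow Z$ is compact and $Z \hookrightarrow Y$ is continuous, for every $\delta > 0$ there is a constant $C_{\delta} > 0$ such that
\[
\|w\|_{Z} \leq \delta \|w\|_{X} + C_{\delta}\|w\|_{Y} \quad \mbox{for all}\ w \in X.
\]
This is proved by contradiction: were it false for some $\delta_{0}$, one could choose $w_{j} \in X$ with $\|w_{j}\|_{Z} = 1$, $\|w_{j}\|_{X} \leq 1/\delta_{0}$ and $\|w_{j}\|_{Y} \to 0$; the compact embedding $X \hookrightarrow Z$ would furnish a subsequence converging in $Z$ to some $w$ with $\|w\|_{Z} = 1$, while the continuity of $Z \hookrightarrow Y$ would force $\|w\|_{Y} = 0$ and hence $w = 0$, a contradiction.

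For part (i) I would verify the two hypotheses of the characterization of relative compactness in $L^{p}(0, T; Z)$: (a) for every $0 < t_{1} < t_{2} < T$ the set of averages $\{\int_{t_{1}}^{t_{2}} v(t)\,dt : v \in K\}$ is relatively compact in $Z$, and (b) the time-translates are uniformly small, $\sup_{v \in K}\|v(\cdot + s) - v\|_{L^{p}(0, T-s; Z)} \to 0$ as $s \searrow 0$. Property (a) is immediate, since those averages are bounded in $X$ by the $L^{p}(0, T; X)$ bound and $X \hookrightarrow Z$ is compact. For (b) I apply the interpolation inequality to $w = v(t+s) - v(t)$ and integrate in $t$:
\[
\|v(\cdot + s) - v\|_{L^{p}(0, T-s; Z)} \leq 2\delta\,\|v\|_{L^{p}(0, T; X)} + C_{\delta}\,\|v(\cdot + s) - v\|_{L^{p}(0, T-s; Y)}.
\]
Here the crucial point is that $\|v(t+s) - v(t)\|_{Y} \leq \int_{t}^{t+s}\|\frac{dv}{d\sigma}(\sigma)\|_{Y}\,d\sigma$, and the right-hand side, as a function of $t$, is bounded in $L^{\infty}(0, T-s)$ by the $L^{1}(0, T; Y)$ derivative bound and has $L^{1}(0, T-s)$ norm at most $s$ times that bound; interpolating between $L^{1}$ and $L^{\infty}$ gives $\|v(\cdot + s) - v\|_{L^{p}(0, T-s; Y)} \leq C\,s^{1/p}$ uniformly in $v \in K$. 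Letting $s \searrow 0$ and then $\delta \searrow 0$ yields (b), and the characterization gives relative compactness of $K$ in $L^{p}(0, T; Z)$.

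For part (ii) I would argue by the vector-valued Arzel\`a--Ascoli theorem in $C([0, T]; Z)$, working with the continuous-in-$Y$ representatives (which exist since $v \in L^{\infty}(0, T; X)$ and $\frac{dv}{dt} \in L^{r}(0, T; Y)$ with $r > 1$). For each fixed $t$ the set $\{v(t) : v \in K\}$ is bounded in $X$ and hence relatively compact in $Z$. For equicontinuity, Hölder's inequality gives $\|v(t+s) - v(t)\|_{Y} \leq s^{1 - 1/r}\,\|\frac{dv}{dt}\|_{L^{r}(0, T; Y)}$, i.e.\ uniform Hölder continuity with values in $Y$; feeding this into the interpolation inequality together with the uniform $L^{\infty}(0, T; X)$ bound upgrades it to uniform equicontinuity with values in $Z$. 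Arzel\`a--Ascoli then yields relative compactness of $K$ in $C([0, T]; Z)$.

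The main obstacle is the uniform, two-parameter passage from $Y$-control to $Z$-control through the interpolation inequality: the parameter $\delta$ must first be taken small to absorb the merely bounded (not small) $X$-contribution, and only afterwards may $s \searrow 0$ be used to kill the $Y$-translate, and both limits have to hold uniformly over the whole family $K$. A secondary technical ingredient, underlying part (i), is the characterization of compactness in $L^{p}(0, T; Z)$ itself (the vector-valued Fr\'echet--Kolmogorov criterion), whose proof proceeds by extending functions by zero, regularizing in time, and exploiting the relative compactness of the time-averages in $Z$.
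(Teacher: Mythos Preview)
The paper does not prove this lemma at all: it is stated with a direct citation to Simon \cite[Section 8, Corollary 4]{Si-1987} and used as a black box. Your proposal, by contrast, gives a genuine outline of Simon's argument, and that outline is essentially correct---the Ehrling interpolation inequality, the Fr\'echet--Kolmogorov-type criterion for (i), and Arzel\`a--Ascoli for (ii) are exactly the ingredients Simon uses.

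One technical point worth tightening in part (ii): from $v \in L^{\infty}(0,T;X)$ you only get $v(t) \in X$ for almost every $t$, so the claim that $\{v(t) : v \in K\}$ is bounded in $X$ for \emph{each} fixed $t$ is not literally available. The standard fix is to note that for every $t$ the value $v(t)$ of the $C([0,T];Y)$-representative lies in the $Z$-closure of a fixed $X$-bounded ball (approximate $t$ by a sequence of ``good'' times and use compactness of $X \hookrightarrow Z$ together with continuity in $Y$); that closure is compact in $Z$, which is what Arzel\`a--Ascoli needs. With this adjustment your sketch goes through, and it supplies strictly more than the paper, which simply invokes the result.
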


\vspace{10pt}

\section{Existence for the discrete problem}\label{Sec3}
In this section we will prove Lemma \ref{existenceforPlamn}. 
\begin{lem}\label{elliptic1}
For all $g \in H$, $\ep \in (0, 1]$, $\lambda \in (0, \ep)$, 
$h \in (0, \frac{\lambda}{2c_{3}\ep})$
there exists a unique solution $u \in W$ of the equation 
\[
(\lambda + (1-\Delta)^{-1})u - \ep h \Delta u + h\beta(u) + h\pi_{\ep}(u) = g. 
\]
\end{lem}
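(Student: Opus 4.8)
The plan is to solve the elliptic equation
\[
(\lambda + (1-\Delta)^{-1})u - \ep h \Delta u + h\beta(u) + h\pi_{\ep}(u) = g
\]
by writing it as a surjectivity statement for a maximal monotone (indeed strongly monotone) operator on $H$. First I would introduce the operator $A_0 : D(A_0) \subset H \to H$ defined by $A_0 u := -\Delta u$ with domain $D(A_0) = W$; this is maximal monotone and self-adjoint on $H$, and $(1-\Delta)^{-1} : H \to H$ is a bounded, linear, self-adjoint, monotone operator (it is the resolvent $F^{-1}$ restricted to $H$, up to identification). Since $\beta$ is maximal monotone on $\mathbb R$ with $\beta = \partial\hat\beta$, the realization $u \mapsto \beta(u)$ is maximal monotone on $H$ with domain $\{u \in H : \beta(u) \in H\}$, and it is the subdifferential of the convex functional $u \mapsto \int_\Omega \hat\beta(u)$. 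The term $\ep h A_0 + h\beta(\cdot)$ is therefore maximal monotone; adding the bounded monotone operator $(1-\Delta)^{-1}$ keeps it maximal monotone (bounded everywhere-defined monotone perturbations preserve maximality). Adding $\lambda I$ makes the whole operator strongly monotone with constant $\lambda > 0$.

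The remaining issue is the perturbation by $h\pi_\ep(\cdot)$, which is \emph{anti}-monotone (Lipschitz with $\|\pi_\ep'\|_{L^\infty} \le c_3\ep$ by (A4)). Here the hypothesis $h < \frac{\lambda}{2c_3\ep}$ enters: the map $u \mapsto \lambda u + h\pi_\ep(u)$ is still strongly monotone, since for $u_1, u_2 \in H$,
\[
\big( \lambda u_1 + h\pi_\ep(u_1) - \lambda u_2 - h\pi_\ep(u_2), u_1 - u_2 \big)_H
\ge \big(\lambda - h c_3\ep\big)\|u_1-u_2\|_H^2 \ge \tfrac{\lambda}{2}\|u_1-u_2\|_H^2,
\]
and it is Lipschitz, hence (being a monotone Lipschitz, thus everywhere-defined and demicontinuous, operator on $H$) maximal monotone. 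So the full operator
\[
B u := \lambda u + h\pi_\ep(u) + (1-\Delta)^{-1}u + \ep h A_0 u + h\beta(u)
\]
is the sum of the maximal monotone strongly-monotone Lipschitz part $u \mapsto \lambda u + h\pi_\ep(u)$, the bounded monotone part $(1-\Delta)^{-1}$, and the maximal monotone part $\ep h A_0 + h\beta(\cdot)$; the sum is maximal monotone (the first two are everywhere-defined and bounded on bounded sets, so the standard perturbation theorem applies) and strongly monotone, hence coercive. By the Minty--Browder surjectivity theorem for maximal monotone coercive operators, $B$ is surjective onto $H$, giving existence of $u \in D(\ep h A_0 + h\beta) = \{u \in W : \beta(u) \in H\} \subset W$ solving $Bu = g$. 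Uniqueness is immediate from strong monotonicity: if $Bu_1 = Bu_2 = g$, pairing the difference with $u_1-u_2$ gives $\frac{\lambda}{2}\|u_1-u_2\|_H^2 \le 0$.

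The step I expect to be the main (though still routine) obstacle is the careful verification that the sum $\ep h A_0 + h\beta(\cdot) + (1-\Delta)^{-1} + \lambda I + h\pi_\ep(\cdot)$ is maximal monotone with the claimed domain $W$ — in particular checking that solutions of $Bu = g$ indeed lie in $W$ with $\Delta u \in H$, which requires knowing that the subdifferential of $u \mapsto \ep h \int_\Omega |\nabla u|^2 + h\int_\Omega\hat\beta(u)$ on $H$ has domain contained in $W$. This is exactly the kind of elliptic regularity/domain identification packaged by Lemma \ref{PT} (Okazawa's result: $(-\Delta u, \beta(u))_H \ge 0$ for $u \in W$ with $\beta(u) \in H$), which one uses to show the formal sum of the two monotone operators already coincides with $\partial$ of the sum of the two functionals, so no extra work on the domain is needed. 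Apart from this bookkeeping, the argument is a direct application of classical monotone operator theory.
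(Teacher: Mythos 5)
Your plan is correct, but it follows a genuinely different route from the paper. You treat the equation as an abstract operator equation in $H$: the realization of $\ep h(-\Delta)+h\beta(\cdot)$ with Neumann domain is maximal monotone (via a Br\'ezis-type perturbation theorem whose hypothesis is exactly the Okazawa inequality of Lemma \ref{PT} applied to $\beta_\tau$), the everywhere-defined monotone Lipschitz parts $(1-\Delta)^{-1}$ and $\lambda I+h\pi_\ep(\cdot)$ (strongly monotone with constant $\lambda/2$ thanks to $h<\frac{\lambda}{2c_3\ep}$) preserve maximality, and Minty surjectivity plus strong monotonicity give existence, $W$-regularity through the domain identification, and uniqueness. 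The paper instead carries out the underlying approximation explicitly: it replaces $\beta$ by its Yosida approximation $\beta_\tau$, solves the regularized problem by the Browder--Minty theorem for the monotone, Lipschitz, coercive operator $\Psi:V\to V^*$ followed by elliptic regularity to land in $W$, derives $\tau$-uniform bounds on $\|u_\tau\|_V$, $\|\beta_\tau(u_\tau)\|_H$ (this is where Lemma \ref{PT} enters in the paper) and $\|u_\tau\|_W$, and passes to the limit $\tau\searrow0$, identifying $\xi=\beta(u)$ by the standard monotonicity lemma of Barbu. The two arguments rest on the same inequality; yours is shorter because the Yosida regularization and limit passage are hidden inside the cited abstract sum theorem (together with the identification of the Neumann Laplacian's $H$-domain as $W$), while the paper's explicit scheme is self-contained, produces the quantitative estimates directly, and rehearses the compactness/monotonicity limit argument that is reused in the later sections. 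In a fully written version you would need to quote the perturbation theorem precisely (or reproduce essentially the paper's computation); as it stands this is a deferral to a standard citable result rather than a gap.
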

\begin{proof}
We define the operator $\Psi : V \to V^{*}$ as 
\[
\langle \Psi u, z \rangle_{V^{*}, V} 
:= ((\lambda + (1-\Delta)^{-1})u, z)_{H} 
   + \ep h \int_{\Omega}\nabla u \cdot \nabla z 
   + h(\beta_{\tau}(u), z)_{H} + h(\pi_{\ep}(u), z)_{H} 
\]
for $u, z \in V$, 
where $\tau>0$ and $\beta_{\tau}$ is the Yosida approximation of $\beta$ 
on $\mathbb{R}$. 
Then this operator is monotone, continuous and coercive 
for all $\ep \in (0, 1]$, $\lambda \in (0, \ep)$, $h \in (0, \frac{\lambda}{c_{3}\ep})$, 
$\tau>0$. 
Indeed, we have from (A4), 
the monotonicity of $(1-\Delta)^{-1}$ and $\beta_{\tau}$,  
the Lipschitz continuity of $(1-\Delta)^{-1}$ and $\beta_{\tau}$ 
that 
\begin{align*}
&\langle \Psi u - \Psi w, u -w \rangle_{V^{*}, V} 
\\
&= \lambda\|u-w\|_{H}^2 + ((1-\Delta)^{-1}(u-w), u-w)_{H} 
 + \ep h\|\nabla(u-w)\|_{H}^2   
\\
&\,\quad+ h (\beta_{\tau}(u)-\beta_{\tau}(w), u-w)_{H} 
 + h (\pi_{\ep}(u) -\pi_{\ep}(w), u-w)_{H} 
\\ 
&\geq \min\{\lambda-c_{3}\ep h, \ep h\}\|u-w\|_{V}^2, 
\\[4mm]
&\langle \Psi u - \Psi w, z \rangle_{V^{*}, V} 
\\ 
&= \lambda(u-w, z)_{H} + ((1-\Delta)^{-1}(u-w), z)_{H} 
 + \ep h\int_{\Omega}\nabla (u-w) \cdot \nabla z   
\\
&\,\quad+ h (\beta_{\tau}(u)-\beta_{\tau}(w), z)_{H} 
 + h (\pi_{\ep}(u) -\pi_{\ep}(w), z)_{H} 
\\ 
&\leq \max\{\lambda, 1, \ep h, \tau^{-1}h, c_{3}\ep h\}\|u-w\|_{V}\|z\|_{V}
\end{align*}
and 
\begin{align*}
\langle \Psi u - \Psi 0, u \rangle_{V^{*}, V}  
\geq \min\{\lambda-c_{3}\ep h, \ep h\}\|u\|_{V}^2
\end{align*}
for all $u, w, z \in V$, $\ep \in (0, 1]$, $\lambda \in (0, \ep)$, 
$h \in (0, \frac{\lambda}{c_{3}\ep})$, $\tau>0$. 
Thus the operator $\Psi : V \to V^{*}$
is surjective
for all $\ep \in (0, 1]$, $\lambda \in (0, \ep)$, 
$h \in (0, \frac{\lambda}{c_{3}\ep})$, $\tau>0$ 
(see e.g., \cite[p.\ 37]{Barbu2})  
and then the elliptic regularity theory yields that
for all $g \in H$, $\ep \in (0, 1]$, $\lambda \in (0, \ep)$, 
$h \in (0, \frac{\lambda}{c_{3}\ep})$, $\tau>0$    
there exists a unique solution $u_{\tau} \in W$ of the equation
\begin{align}\label{ep elliptic eq}
(\lambda + (1-\Delta)^{-1})u_{\tau} - \ep h \Delta u_{\tau} 
+ h\beta_{\tau}(u_{\tau}) + h\pi_{\ep}(u_{\tau}) = g. 
\end{align}
Here, multiplying \eqref{ep elliptic eq} by $u_{\tau}$
and integrating over $\Omega$,
we see from (A4) and the Young inequality that 
\begin{align*}
&\lambda\|u_{\tau}\|_{H}^2 
+ ((1-\Delta)^{-1}u_{\tau}, u_{\tau})_{H} 
+ \ep h \|\nabla u_{\tau}\|_{H}^2 
+ h(\beta_{\tau}(u_{\tau}), u_{\tau})_{H} 
\\ 
&= (g, u_{\tau})_{H} 
   - h(\pi_{\ep}(u_{\tau})-\pi_{\ep}(0), u_{\tau})_{H}
  - h(\pi_{\ep}(0), u_{\tau})_{H} 
\\ 
&\leq \|g\|_{H}\|u_{\tau}\|_{H} 
         + h(\|\pi_{\ep}'\|_{L^{\infty}(\mathbb{R})} + |\pi_{\ep}(0)|)\|u_{\tau}\|_{H}^2 
         + \frac{|\pi_{\ep}(0)||\Omega|}{4}h  
\\ 
&\leq \|g\|_{H}\|u_{\tau}\|_{H} + c_{3}\ep h\|u_{\tau}\|_{H}^2 
         + \frac{c_{3}\ep|\Omega|}{4}h  
\\ 
&\leq \frac{1}{4c_{3}\ep h}\|g\|_{H}^2 + 2c_{3}\ep h\|u_{\tau}\|_{H}^2 
         + \frac{c_{3}\ep|\Omega|}{4}h,   
\end{align*} 
and hence 
for all $\ep \in (0, 1]$, $\lambda \in (0, \ep)$, $h \in (0, \frac{\lambda}{2c_{3} \ep})$ 
there exists a constant $C_{1} = C_{1}(\ep, \lambda, h)$ such that 
\begin{equation}\label{ellipes1}
\|u_{\tau}\|_{V}^2 \leq C_{1} 
\end{equation}
for all $\tau > 0$. 
It follows from \eqref{ep elliptic eq}, Lemma \ref{PT}, the monotonicity of $\beta_{\tau}$ 
and the Lipschitz continuity of the operator $(1-\Delta)^{-1}$  
that 
\begin{align}\label{a1}
&\|\beta_{\tau}(u_{\tau})\|_{H}^2 
= (\beta_{\tau}(u_{\tau}), \beta_{\tau}(u_{\tau}))_{H} 
\notag \\ 
&= \frac{1}{h}(g, \beta_{\tau}(u_{\tau}))_{H} 
   - (\pi_{\ep}(u_{\tau}), \beta_{\tau}(u_{\tau}))_{H} 
   - \ep(-\Delta u_{\tau}, \beta_{\tau}(u_{\tau}))_{H} 
\notag \\
&\,\quad- \frac{\lambda}{h}(u_{\tau}, \beta_{\tau}(u_{\tau}))_{H} 
   -  \frac{1}{h}((1-\Delta)^{-1}u_{\tau}, \beta_{\tau}(u_{\tau}))_{H} 
\notag \\ 
&\leq \frac{1}{h}\|g\|_{H}\|\beta_{\tau}(u_{\tau})\|_{H} 
         + \|\pi_{\ep}(u_{\tau})\|_{H}\|\beta_{\tau}(u_{\tau})\|_{H} 
         + \frac{1}{h}\|u_{\tau}\|_{H}\|\beta_{\tau}(u_{\tau})\|_{H}.  
\end{align}
Thus we derive from \eqref{a1}, (A4), the Young inequality and \eqref{ellipes1} that 
for all $\ep \in (0, 1]$, $\lambda \in (0, \ep)$, $h \in (0, \frac{\lambda}{2c_{3} \ep})$ 
there exists a constant $C_{2} = C_{2}(\ep, \lambda, h)$ such that 
\begin{align}\label{ellipes2}
\|\beta_{\tau}(u_{\tau})\|_{H}^2 \leq C_{2} 
\end{align}
for all $\tau > 0$. 
Moreover, the equation \eqref{ep elliptic eq}, the condition (A4), 
the inequalities \eqref{ellipes1}, \eqref{ellipes2},  
and the elliptic regularity theory  
imply that 
for all $\ep \in (0, 1]$, $\lambda \in (0, \ep)$, $h \in (0, \frac{\lambda}{2c_{3} \ep})$ 
there exists a constant $C_{3} = C_{3}(\ep, \lambda, h)$ such that  
\begin{align}\label{ellipes3}
\|u_{\tau}\|_{W} ^2 \leq C_{3}
\end{align}
for all $\tau>0$.  
Hence by \eqref{ellipes2} and \eqref{ellipes3} there exist 
$u \in W$ and $\xi \in H$ such that 
\begin{align}
&u_{\tau} \to u \quad \mbox{weakly in}\ W,  \label{weakconvforellip1} \\ 
&\beta_{\tau}(u_{\tau}) \to \xi \quad \mbox{weakly in}\ H  
\label{weakconvforellip2}
\end{align}
as $\tau = \tau_{j} \searrow 0$. 
Here, owing to \eqref{ellipes1} and the compact embedding $V \hookrightarrow H$, 
it holds that 
\begin{align}\label{strongconvforellip}
u_{\tau} \to u \quad \mbox{strongly in}\ H 
\end{align}
as $\tau = \tau_{j} \searrow 0$. 
Moreover, the convergences \eqref{weakconvforellip2} and 
\eqref{strongconvforellip} 
mean that 
$(\beta_{\tau}(u_{\tau}), u_{\tau})_{H} \to (\xi, u)_{H}$ 
as $\tau = \tau_{j} \searrow 0$, whence 
we have that 
\begin{align}\label{xibetaforellip}
\xi = \beta(u)  \quad \mbox{a.e.\ on}\ \Omega
\end{align}
(see e.g., \cite[Lemma 1.3, p.\ 42]{Barbu1}).   

Therefore we infer from \eqref{ep elliptic eq}, \eqref{weakconvforellip1}, 
\eqref{weakconvforellip2}, \eqref{strongconvforellip}, 
the Lipschitz continuity of $\pi_{\ep}$, and \eqref{xibetaforellip}  
that there exists a solution $u \in W$ of the equation 
\begin{align*}
(\lambda + (1-\Delta)^{-1})u - \ep h \Delta u + h\beta(u) + h\pi_{\ep}(u) = g. 
\end{align*}
Moreover, we can check that the solution $u$ of this problem is unique. 
\end{proof}
\begin{prlem2.1}
The problem \ref{Plamn} can be written as 
\begin{equation}\label{Qlamn}\tag*{(Q)$_{\lambda, n}$}
\begin{cases}
(\lambda + (1-\Delta)^{-1})u_{\lambda, n+1} 
-\ep h \Delta u_{\lambda, n+1} 
+ h\beta(u_{\lambda, n+1}) + h\pi_{\ep}(u_{\lambda, n+1}) 
\\ 
\hspace{0.7mm}= hf_{n+1} + \lambda u_{\lambda, n} + (1-\Delta)^{-1}u_{\lambda, n} 
   + h(1-\Delta)^{-1}\mu_{\lambda, n}  
\\ 
\hspace{40mm}- \eta h(1-\Delta)^{-1}
                      \nabla\cdot(u_{\lambda, n}\nabla(1-\Delta)^{-1}u_{\lambda, n}), & 
\\[5mm]
\mu_{\lambda, n+1} - \Delta \mu_{\lambda, n+1} 
= \mu_{\lambda, n} - \frac{u_{\lambda, n+1}-u_{\lambda, n}}{h} 
   - \eta\nabla\cdot(u_{\lambda, n}\nabla(1-\Delta)^{-1}u_{\lambda, n}). &
\end{cases}
\end{equation}
Thus proving Lemma \ref{existenceforPlamn} is equivalent to 
derive existence and uniqueness of solutions to \ref{Qlamn} 
for $n = 0, ..., N-1$. 
It suffices to consider the case that $n=0$. 
By Lemma \ref{elliptic1} there exists a unique solution $u_{\lambda, 1} \in W$ 
of the equation 
\begin{align*}
&(\lambda + (1-\Delta)^{-1})u_{\lambda, 1} 
-\ep h \Delta u_{\lambda, 1} 
+ h\beta(u_{\lambda, 1}) + h\pi_{\ep}(u_{\lambda, 1}) 
\\[2mm] 
&= hf_{1} + \lambda u_{\lambda, 0} + (1-\Delta)^{-1}u_{\lambda, 0} 
   + h(1-\Delta)^{-1}\mu_{\lambda, 0}  
\\ 
&\,\quad- \eta h(1-\Delta)^{-1}
                      \nabla\cdot(u_{\lambda, 0}\nabla(1-\Delta)^{-1}u_{\lambda, 0}). 
\end{align*}
Therefore, putting $\mu_{1} := 
(1-\Delta)^{-1}(\mu_{\lambda, 0} - \frac{u_{\lambda, 1}-u_{\lambda, 0}}{h} 
   - \eta\nabla\cdot(u_{\lambda, 0}\nabla(1-\Delta)^{-1}u_{\lambda, 0}))$, 
we conclude that 
there exists a unique solution $(u_{\lambda, 1}, \mu_{\lambda, 1})$ 
of \ref{Qlamn} in the case that $n=0$. \qed
\end{prlem2.1}

\vspace{10pt}
 
\section{Uniform estimates and passage to the limit}\label{Sec4}

In this section we will show Lemma \ref{existenceforPeplam}. 
We will establish a priori estimates for \ref{Ph} to 
prove existence for \ref{Peplam} 
by passing to the limit in \ref{Ph}. 
\begin{lem}\label{timedisces1}
There exists a constant $C>0$ depending on the data 
such that 
for all $\ep \in (0, 1]$ and all $\lambda \in (0, \ep)$ 
there exists $h_{1} \in (0, \min\{1, \frac{\lambda}{2c_{3}\ep}\})$ such that 
\begin{align*} 
&\|(\hat{u}_{h})_{t} + h(\hat{\mu}_{h})_{t}\|_{L^2(0, T; V_{0}^{*})}^2 
+ \lambda\|(\hat{u}_{h})_{t}\|_{L^2(0, T; H)}^2 
\\[2mm] 
&+ \ep\|\overline{u}_{h}\|_{L^{\infty}(0, T; V)}^2 
+ \ep h\|(\hat{u}_{h})_{t}\|_{L^2(0, T; V)}^2 
+ \|\overline{u}_{h}\|_{L^{\infty}(0, T; L^4(\Omega))}^4 
\\[1.5mm] 
&+ h\|\overline{\mu}_{h}\|_{L^{\infty}(0, T; H)}^2 
+ h^2\|(\hat{\mu}_{h})_{t}\|_{L^2(0, T; H)}^2  
+ \|\nabla\overline{\mu}_{h}\|_{L^2(0, T; H)}^2 
\leq C 
\end{align*}
for all $h \in (0, h_{1})$.  
\end{lem}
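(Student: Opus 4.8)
The plan is to run the standard Cahn--Hilliard energy estimate on the time-discrete scheme \ref{Plamn} and then close it by a discrete Gronwall argument; the two points that really need care are the forcing term and the chemotactic term, both of which must be controlled by a constant independent of $\ep$ and $\lambda$. First I would take the solution $(u_{\lambda, n+1}, \mu_{\lambda, n+1})$ of \ref{Plamn} given by Lemma \ref{existenceforPlamn}, test the first equation of \ref{Plamn} by $\mu_{\lambda, n+1}$ and the second by $\delta_{h}u_{\lambda, n}$, integrate by parts (all boundary terms vanish since $\partial_\nu \mu_{\lambda, n+1} = \partial_\nu u_{\lambda, n+1} = \partial_\nu (1-\Delta)^{-1}u_{\lambda, n} = 0$), and subtract; the term $(\mu_{\lambda, n+1}, \delta_{h}u_{\lambda, n})_H$ cancels and one is left with
\begin{align*}
&\lambda \|\delta_{h}u_{\lambda, n}\|_H^2 + \ep(\nabla u_{\lambda, n+1}, \nabla \delta_{h}u_{\lambda, n})_H + (\beta(u_{\lambda, n+1}), \delta_{h}u_{\lambda, n})_H \\
&\quad + (\pi_{\ep}(u_{\lambda, n+1}), \delta_{h}u_{\lambda, n})_H + h(\delta_{h}\mu_{\lambda, n}, \mu_{\lambda, n+1})_H + \|\nabla \mu_{\lambda, n+1}\|_H^2 \\
&= (f_{n+1}, \delta_{h}u_{\lambda, n})_H + \eta\int_{\Omega} u_{\lambda, n}\nabla(1-\Delta)^{-1}u_{\lambda, n} \cdot \nabla \mu_{\lambda, n+1}.
\end{align*}

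On the left I would use $2(a, a-b)_H = \|a\|_H^2 - \|b\|_H^2 + \|a-b\|_H^2$ for the $\ep\nabla u$-term and the $h\mu$-term, the convexity of $\hat{\beta}$ (so that $(\beta(u_{\lambda, n+1}), u_{\lambda, n+1}-u_{\lambda, n})_H \geq \int_{\Omega}(\hat{\beta}(u_{\lambda, n+1}) - \hat{\beta}(u_{\lambda, n})))$, and, for the anti-monotone part, the fact that by (A4) the function $r \mapsto \hat{\pi}_{\ep}(r) + \tfrac{c_3\ep}{2}r^2$ is convex (where $\hat{\pi}_{\ep}$ is a primitive of $\pi_{\ep}$), which yields $(\pi_{\ep}(u_{\lambda, n+1}), u_{\lambda, n+1}-u_{\lambda, n})_H \geq \int_{\Omega}(\hat{\pi}_{\ep}(u_{\lambda, n+1}) - \hat{\pi}_{\ep}(u_{\lambda, n})) - \tfrac{c_3\ep}{2}\|u_{\lambda, n+1}-u_{\lambda, n}\|_H^2$. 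Multiplying by $h$, summing over $n = 0, \dots, m-1$ and using \eqref{hat1}--\eqref{line1} and $\mu_{\lambda, 0} = 0$, the left-hand side is bounded below, up to the error $\tfrac{c_3\ep h}{2}\|(\hat{u}_{h})_{t}\|_{L^2(0, mh; H)}^2$, by
\[
\mathcal{E}_m + \lambda\|(\hat{u}_{h})_{t}\|_{L^2(0, mh; H)}^2 + \tfrac{\ep h}{2}\|\nabla(\hat{u}_{h})_{t}\|_{L^2(0, mh; H)}^2 + \tfrac{h^2}{2}\|(\hat{\mu}_{h})_{t}\|_{L^2(0, mh; H)}^2 + \|\nabla \overline{\mu}_{h}\|_{L^2(0, mh; H)}^2,
\]
where $\mathcal{E}_m := \tfrac{\ep}{2}\|\nabla u_{\lambda, m}\|_H^2 + \int_{\Omega}\hat{\beta}(u_{\lambda, m}) + \int_{\Omega}\hat{\pi}_{\ep}(u_{\lambda, m}) + \tfrac{h}{2}\|\mu_{\lambda, m}\|_H^2$. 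By (A2) and the elementary bound $|\hat{\pi}_{\ep}(r)| \leq c_3\ep(|r| + r^2)$ together with Young's inequality, $\mathcal{E}_m \geq \tfrac{\ep}{2}\|\nabla u_{\lambda, m}\|_H^2 + \tfrac{c_1}{2}\|u_{\lambda, m}\|_{L^4(\Omega)}^4 + \tfrac{h}{2}\|\mu_{\lambda, m}\|_H^2 - C$, while by (A5) $\mathcal{E}_0 \leq C$; moreover the error satisfies $\tfrac{c_3\ep h}{2}\|(\hat{u}_{h})_{t}\|_{L^2(0, mh; H)}^2 \leq \tfrac{\lambda}{4}\|(\hat{u}_{h})_{t}\|_{L^2(0, mh; H)}^2$ as soon as $h < \tfrac{\lambda}{2c_3\ep}$ and is therefore absorbed.

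For the right-hand side, the chemotactic term is treated by elliptic regularity together with the embedding $H^2(\Omega) \hookrightarrow W^{1,4}(\Omega)$ (valid for $d \leq 3$), which gives $\|u_{\lambda, n}\nabla(1-\Delta)^{-1}u_{\lambda, n}\|_H \leq C\|u_{\lambda, n}\|_{L^4(\Omega)}^2$, followed by Young's inequality: $\eta\int_{\Omega} u_{\lambda, n}\nabla(1-\Delta)^{-1}u_{\lambda, n}\cdot\nabla\mu_{\lambda, n+1} \leq \tfrac14\|\nabla\mu_{\lambda, n+1}\|_H^2 + C\eta^2\|u_{\lambda, n}\|_{L^4(\Omega)}^4$. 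The forcing term cannot be absorbed through $\lambda\|\delta_{h}u_{\lambda, n}\|_H^2$ with a $\lambda$-independent constant; instead I would substitute $\delta_{h}u_{\lambda, n}$ from the first equation of \ref{Plamn} tested with $f_{n+1} \in W$, namely
\[
(f_{n+1}, \delta_{h}u_{\lambda, n})_H = -h(\delta_{h}\mu_{\lambda, n}, f_{n+1})_H - (\nabla\mu_{\lambda, n+1}, \nabla f_{n+1})_H + \eta\int_{\Omega} u_{\lambda, n}\nabla(1-\Delta)^{-1}u_{\lambda, n}\cdot\nabla f_{n+1},
\]
and bound the three resulting terms by $\tfrac{h^2}{4}\|\delta_{h}\mu_{\lambda, n}\|_H^2 + \|f_{n+1}\|_H^2$, by $\tfrac14\|\nabla\mu_{\lambda, n+1}\|_H^2 + \|\nabla f_{n+1}\|_H^2$, and by $C\eta^2\|\nabla f_{n+1}\|_H^2 + \tfrac12\|u_{\lambda, n}\|_{L^4(\Omega)}^4$, respectively. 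After multiplying by $h$ and summing, $h\sum_n(\|f_{n+1}\|_H^2 + \|\nabla f_{n+1}\|_H^2) \leq \|f\|_{L^2(0, T; V)}^2 \leq C$ by (A3), the quadratic $\mu$-parts $\tfrac12\|\nabla\overline{\mu}_{h}\|_{L^2(0, mh; H)}^2$ and $\tfrac{h^2}{4}\|(\hat{\mu}_{h})_{t}\|_{L^2(0, mh; H)}^2$ produced on the right are absorbed into the left, and $h\sum_n\|u_{\lambda, n}\|_{L^4(\Omega)}^4$ is kept as a Gronwall term.

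Collecting everything, for every $m = 1, \dots, N$,
\[
\mathcal{E}_m + \tfrac34\lambda\|(\hat{u}_{h})_{t}\|_{L^2(0, mh; H)}^2 + \tfrac{\ep h}{2}\|\nabla(\hat{u}_{h})_{t}\|_{L^2(0, mh; H)}^2 + \tfrac{h^2}{4}\|(\hat{\mu}_{h})_{t}\|_{L^2(0, mh; H)}^2 + \tfrac12\|\nabla \overline{\mu}_{h}\|_{L^2(0, mh; H)}^2 \leq C + C\sum_{n=0}^{m-1}h\|u_{\lambda, n}\|_{L^4(\Omega)}^4,
\]
and since $\|u_{\lambda, n}\|_{L^4(\Omega)}^4 \leq C\mathcal{E}_n + C$ for $n \geq 1$ (again from the lower bound on $\mathcal{E}_n$) and $\|u_{0\ep}\|_{L^4(\Omega)}^4 \leq c_4$ by (A5), the right-hand side is $\leq C + C\sum_{n=1}^{m-1}h\,\mathcal{E}_n$; applying a discrete Gronwall inequality to $\mathcal{E}_m + C'$ (which only requires $h$ smaller than a threshold depending on the data, hence fixes $h_1$) gives $\sup_m \mathcal{E}_m \leq C$, and then all four nonnegative quantities on the left are $\leq C$ as well. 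From these, the bounds on $\|\overline{u}_{h}\|_{L^{\infty}(0, T; L^4(\Omega))}^4$ and $h\|\overline{\mu}_{h}\|_{L^{\infty}(0, T; H)}^2$ are immediate; that on $\ep\|\overline{u}_{h}\|_{L^{\infty}(0, T; V)}^2$ uses in addition $\ep\|u_{\lambda, m}\|_H^2 \leq \ep|\Omega|^{1/2}\|u_{\lambda, m}\|_{L^4(\Omega)}^2 \leq C$; the term $\ep h\|(\hat{u}_{h})_{t}\|_{L^2(0, T; V)}^2$ splits into $\ep h\|(\hat{u}_{h})_{t}\|_{L^2(0, T; H)}^2 + \ep h\|\nabla(\hat{u}_{h})_{t}\|_{L^2(0, T; H)}^2$, the second summand being already bounded and the first being $\leq \tfrac{\ep h}{\lambda}\,\lambda\|(\hat{u}_{h})_{t}\|_{L^2(0, T; H)}^2 \leq \tfrac{1}{2c_3}C$ because $h < \tfrac{\lambda}{2c_3\ep}$; finally, the $L^2(0, T; V_0^*)$-estimate follows from the weak form of the first equation of \ref{Plamn}, which for $z \in V_0$ reads $\langle (\hat{u}_{h})_{t} + h(\hat{\mu}_{h})_{t}, z\rangle_{V_0^*, V_0} = -(\nabla\overline{\mu}_{h}, \nabla z)_H + \eta\int_{\Omega}\underline{u}_{h}\nabla(1-\Delta)^{-1}\underline{u}_{h}\cdot\nabla z$, whence $\|(\hat{u}_{h})_{t} + h(\hat{\mu}_{h})_{t}\|_{V_0^*} \leq \|\nabla\overline{\mu}_{h}\|_H + C|\eta|\|\underline{u}_{h}\|_{L^4(\Omega)}^2$ and one integrates in time. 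The hard part is precisely making the forcing and chemotactic contributions uniform in $\lambda$: this forces the substitution from the mass-balance equation and the semiconvexity handling of $\hat{\pi}_{\ep}$ together with the step-size restriction $h < \tfrac{\lambda}{2c_3\ep}$; the remainder is routine time-discrete energy bookkeeping.
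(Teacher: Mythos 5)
Your proposal is correct and reaches all the bounds in the statement, but the treatment of the delicate lower-order terms is genuinely different from the paper's. The paper groups $\pi_{\ep}(u_{\lambda,n+1})-f_{n+1}-\ep u_{\lambda,n+1}$ into a single term paired with $u_{\lambda,n+1}-u_{\lambda,n}$, then exploits the zero-mean identity \eqref{Plamnhozon} to rewrite that pairing as a $V_{0}^{*}$--$V_{0}$ duality (after mean subtraction) and controls $h\|\delta_{h}u_{\lambda,n}+\mu_{\lambda,n+1}-\mu_{\lambda,n}\|_{V_{0}^{*}}^{2}$ through the first equation and the operator ${\cal N}$ (see \eqref{b4}--\eqref{b5}); this simultaneously places the $V_{0}^{*}$ term on the left of the summed inequality, and it forces the extra smallness condition absorbing $C_{1}\ep h\|u_{\lambda,n+1}\|_{V}^{2}$ into $\tfrac{\ep}{2}\|u_{\lambda,m}\|_{V}^{2}$. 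You instead (i) handle $\pi_{\ep}$ by the semiconvexity of its primitive ($r\mapsto\hat{\pi}_{\ep}(r)+\tfrac{c_{3}\ep}{2}r^{2}$ convex by (A4)), keeping $\int_{\Omega}\hat{\pi}_{\ep}$ in the energy and absorbing the error $\tfrac{c_{3}\ep h}{2}\|(\hat{u}_{h})_{t}\|_{L^{2}(0,mh;H)}^{2}$ into $\lambda\|(\hat{u}_{h})_{t}\|_{L^{2}(0,mh;H)}^{2}$ thanks to $h<\tfrac{\lambda}{2c_{3}\ep}$, which is legitimate and yields a $\lambda$-independent constant; (ii) handle the forcing by testing the first equation of \ref{Plamn} directly with $f_{n+1}$ (admissible since $f\in L^{2}(0,T;H^{2}(\Omega))$ with $\partial_{\nu}f=0$ by (A3), so all boundary terms vanish), which replaces the paper's mean-subtraction device — the point being that you use the equation pointwise, so no zero-mean issue arises; and (iii) recover the $\|(\hat{u}_{h})_{t}+h(\hat{\mu}_{h})_{t}\|_{L^{2}(0,T;V_{0}^{*})}$ bound a posteriori from the weak form, using the already established bounds on $\nabla\overline{\mu}_{h}$ and $\|\underline{u}_{h}\|_{L^{\infty}(0,T;L^{4}(\Omega))}$ (the latter needing (A5) for the initial step, as in Lemma \ref{timedisces2}). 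The chemotaxis estimate via elliptic regularity and $H^{2}(\Omega)\hookrightarrow W^{1,4}(\Omega)$, the convexity of $\hat{\beta}$, the use of (A2) and (A5) to get the $L^{4}$ coercivity of the energy, and the closing discrete Gronwall argument coincide with the paper's \eqref{b3}, \eqref{b6} and the final summation. What your route buys is avoiding the ${\cal N}$/$V_{0}^{*}$ machinery inside the energy estimate and the associated absorption condition; what the paper's route buys is a single mechanism that treats $\pi_{\ep}$, $f$ and the $\ep u_{\lambda,n+1}$ correction at once and delivers the $V_{0}^{*}$ estimate as a by-product.
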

\begin{proof}
We multiply the first equation in \ref{Plamn} by $h\mu_{\lambda, n+1}$, 
integrate over $\Omega$ and use the Young inequality to obtain that 
\begin{align}\label{b1}
&(u_{\lambda, n+1}-u_{\lambda, n}, \mu_{\lambda, n+1})_{H} 
+ \frac{h}{2}(\|\mu_{\lambda, n+1}\|_{H}^2 - \|\mu_{\lambda, n}\|_{H}^2 
                                + \|\mu_{\lambda, n+1} - \mu_{\lambda, n}\|_{H}^2) 
\notag \\ 
&+ h\|\nabla\mu_{\lambda, n+1}\|_{H}^2 
\notag \\ 
&= \eta h\int_{\Omega}
      u_{\lambda, n}\nabla(1-\Delta)^{-1}u_{\lambda, n}\cdot\nabla\mu_{\lambda, n+1} 
\notag \\ 
&\leq \frac{h}{4}\|\nabla\mu_{\lambda, n+1}\|_{H}^2 
       + \eta^2 h\int_{\Omega}|u_{\lambda, n}\nabla(1-\Delta)^{-1}u_{\lambda, n}|^2.  
\end{align}
Here multiplying the second equation in \ref{Plamn} 
by $u_{\lambda, n+1}-u_{\lambda, n}$ 
and integrating over $\Omega$ 
lead to the identity 
\begin{align}\label{b2}
&(u_{\lambda, n+1}-u_{\lambda, n}, \mu_{\lambda, n+1})_{H} 
\notag \\ 
&= \lambda h \Bigl\|\frac{u_{\lambda, n+1}-u_{\lambda, n}}{h}\Bigr\|_{H}^2 
     + \frac{\ep}{2}(\|u_{\lambda, n+1}\|_{V}^2 - \|u_{\lambda, n}\|_{V}^2
                                                      + \|u_{\lambda, n+1}-u_{\lambda, n}\|_{V}^2) 
\notag \\
&\,\quad + (\beta(u_{\lambda, n+1}), u_{\lambda, n+1}-u_{\lambda, n})_{H} 
\notag \\
&\,\quad+ (\pi_{\ep}(u_{\lambda, n+1})-f_{n+1}-\ep u_{\lambda, n+1}, 
                                                               u_{\lambda, n+1}-u_{\lambda, n})_{H}. 
\end{align}
It follows from (A1) and the definition of the subdifferential that 
\begin{align}\label{b3} 
(\beta(u_{\lambda, n+1}), u_{\lambda, n+1}-u_{\lambda, n})_{H} 
\geq \int_{\Omega}\hat{\beta}(u_{\lambda, n+1}) 
       - \int_{\Omega}\hat{\beta}(u_{\lambda, n}). 
\end{align}
Since the first equation in \ref{Plamn} means that 
\begin{align}\label{Plamnhozon}
\int_{\Omega}\Bigl(\frac{u_{\lambda, n+1}-u_{\lambda, n}}{h} 
                                    + \mu_{\lambda, n+1}-\mu_{\lambda, n}\Bigr) = 0,   
\end{align}
we have from (A4) and the Young inequality that 
there exists a constant $C_{1}>0$ 
such that 
\begin{align}\label{b4}
&- (\pi_{\ep}(u_{\lambda, n+1})-f_{n+1}-\ep u_{\lambda, n+1}, 
                                                          u_{\lambda, n+1}-u_{\lambda, n})_{H} 
\notag \\[3mm] 
&= - h\Bigl(\pi_{\ep}(u_{\lambda, n+1})-f_{n+1}-\ep u_{\lambda, n+1}, 
                                    \frac{u_{\lambda, n+1}-u_{\lambda, n}}{h} 
                                    + \mu_{\lambda, n+1}-\mu_{\lambda, n}\Bigr)_{H} 
\notag \\ 
&\,\quad + h(\pi_{\ep}(u_{\lambda, n+1})-f_{n+1}-\ep u_{\lambda, n+1}, 
                                             \mu_{\lambda, n+1}-\mu_{\lambda, n})_{H} 
\notag \\[3mm] 
&= - h \Bigl\langle 
             \frac{u_{\lambda, n+1}-u_{\lambda, n}}{h} 
             + \mu_{\lambda, n+1}-\mu_{\lambda, n}, 
\notag \\
    &\hspace{20mm}\pi_{\ep}(u_{\lambda, n+1})-f_{n+1}-\ep u_{\lambda, n+1} 
                  - \frac{1}{|\Omega|}\int_{\Omega}
                                     (\pi_{\ep}(u_{\lambda, n+1})-f_{n+1}-\ep u_{\lambda, n+1})          
          \Bigr\rangle_{V_{0}^{*}, V_{0}} 
\notag \\ 
&\,\quad + h(\pi_{\ep}(u_{\lambda, n+1})-f_{n+1}-\ep u_{\lambda, n+1}, 
                                             \mu_{\lambda, n+1}-\mu_{\lambda, n})_{H} 
\notag \\[5mm] 
&\leq \frac{h}{8}\Bigl\|\frac{u_{\lambda, n+1}-u_{\lambda, n}}{h} 
             + \mu_{\lambda, n+1}-\mu_{\lambda, n}\Bigr\|_{V_{0}^{*}}^2 
         + \frac{h}{4}\|\mu_{\lambda, n+1}-\mu_{\lambda, n}\|_{H}^2 
         + C_{1}\ep^2 h \|u_{\lambda, n+1}\|_{V}^2 
\notag \\
         &\,\quad + C_{1}h\|f_{n+1}\|_{V}^2 + C_{1}\ep^2 h 
\end{align}
for all 
$\ep \in (0, 1]$, $\lambda \in (0, \ep)$, $h \in (0, \frac{\lambda}{2c_{3}\ep})$. 
We see from the first equation in \ref{Plamn} and the Young inequality that 
\begin{align}\label{b5}
&h\Bigl\|\frac{u_{\lambda, n+1}-u_{\lambda, n}}{h} 
             + \mu_{\lambda, n+1}-\mu_{\lambda, n}\Bigr\|_{V_{0}^{*}}^2 
\notag \\ 
&= h\Bigl\langle
            \frac{u_{\lambda, n+1}-u_{\lambda, n}}{h} 
                   + \mu_{\lambda, n+1}-\mu_{\lambda, n}, 
            {\cal N}\Bigl(\frac{u_{\lambda, n+1}-u_{\lambda, n}}{h} 
                                    + \mu_{\lambda, n+1}-\mu_{\lambda, n}\Bigr) 
      \Bigr\rangle_{V_{0}^{*}, V_{0}} 
\notag \\ 
&= -h\int_{\Omega}\nabla\mu_{\lambda, n+1}\cdot
                      \nabla{\cal N}\Bigl(\frac{u_{\lambda, n+1}-u_{\lambda, n}}{h} 
                                                 + \mu_{\lambda, n+1}-\mu_{\lambda, n}\Bigr) 
\notag \\ 
   &\,\quad + \eta h\int_{\Omega}
                            u_{\lambda, n}\nabla(1-\Delta)^{-1}u_{\lambda, n}\cdot
                               \nabla{\cal N}\Bigl(\frac{u_{\lambda, n+1}-u_{\lambda, n}}{h} 
                                                     + \mu_{\lambda, n+1}-\mu_{\lambda, n}\Bigr) 
\notag \\ 
&\leq \frac{h}{2}\|\nabla\mu_{\lambda, n+1}\|_{H}^2 
         + \eta^2 h\int_{\Omega}|u_{\lambda, n}\nabla(1-\Delta)^{-1}u_{\lambda, n}|^2 
\notag \\ 
  &\,\quad+ \frac{3}{4}h\Bigl\|\frac{u_{\lambda, n+1}-u_{\lambda, n}}{h} 
                                   + \mu_{\lambda, n+1}-\mu_{\lambda, n}\Bigr\|_{V_{0}^{*}}^2. 
\end{align}
We infer from 
the continuity of 
the embedding $W^{2, 4}(\Omega) \hookrightarrow W^{1, 4}(\Omega)$ 
and standard elliptic regularity theory (\cite{F-1969})    
that 
there exist constants $C_{2}, C_{3}>0$ such that 
\begin{align}\label{b6}
\eta^2 h\int_{\Omega}|u_{\lambda, n}\nabla(1-\Delta)^{-1}u_{\lambda, n}|^2 
&\leq \eta^2 h \|u_{\lambda, n}\|_{L^4(\Omega)}^2
                     \|\nabla(1-\Delta)^{-1}u_{\lambda, n}\|_{L^4(\Omega)}^2 
\notag \\ 
&\leq C_{2}\eta^2 h \|u_{\lambda, n}\|_{L^4(\Omega)}^2
                     \|(1-\Delta)^{-1}u_{\lambda, n}\|_{W^{2, 4}(\Omega)}^2 
\notag \\ 
&\leq C_{3}\eta^2 h \|u_{\lambda, n}\|_{L^4(\Omega)}^2
                     \|(1-\Delta)(1-\Delta)^{-1}u_{\lambda, n}\|_{L^4(\Omega)}^2 
\notag \\ 
&= C_{3}\eta^2 h \|u_{\lambda, n}\|_{L^4(\Omega)}^4
\end{align}
for all 
$\ep \in (0, 1]$, $\lambda \in (0, \ep)$, $h \in (0, \frac{\lambda}{2c_{3}\ep})$. 
Thus we derive from \eqref{b1}-\eqref{b3}, \eqref{b4}-\eqref{b6} that 
\begin{align}\label{b7}
&\frac{h}{8}\Bigl\|\frac{u_{\lambda, n+1}-u_{\lambda, n}}{h} 
                                   + \mu_{\lambda, n+1}-\mu_{\lambda, n}\Bigr\|_{V_{0}^{*}}^2 
+ \lambda h\Bigl\|\frac{u_{\lambda, n+1}-u_{\lambda, n}}{h}\Bigr\|_{H}^2 
\notag \\ 
&+ \frac{\ep}{2}(\|u_{\lambda, n+1}\|_{V}^2 - \|u_{\lambda, n}\|_{V}^2 
                                      + \|u_{\lambda, n+1}-u_{\lambda, n}\|_{V}^2) 
+ \int_{\Omega}\hat{\beta}(u_{\lambda, n+1}) 
                                       - \int_{\Omega}\hat{\beta}(u_{\lambda, n}) 
\notag \\ 
&+ \frac{h}{2}(\|\mu_{\lambda, n+1}\|_{H}^2 - \|\mu_{\lambda, n}\|_{H}^2) 
   + \frac{h}{4}\|\mu_{\lambda, n+1}-\mu_{\lambda, n}\|_{H}^2 
   + \frac{h}{4}\|\nabla\mu_{\lambda, n+1}\|_{H}^2  
\notag \\ 
&\leq C_{1}\ep h\|u_{\lambda, n+1}\|_{V}^2 
        + 2C_{3}\eta^2 h\|u_{\lambda, n}\|_{L^4(\Omega)}^4 
        + C_{1}h\|f_{n+1}\|_{V}^2 + C_{1}h 
\end{align}
for all 
$\ep \in (0, 1]$, $\lambda \in (0, \ep)$, $h \in (0, \frac{\lambda}{2c_{3}\ep})$. 
Then we sum \eqref{b7} over $n=0, ..., m-1$ with $1 \leq m \leq N$ 
and use (A5) 
to obtain that 
\begin{align*}
&\frac{h}{8}\sum_{n=0}^{m-1}
              \Bigl\|\frac{u_{\lambda, n+1}-u_{\lambda, n}}{h} 
                                   + \mu_{\lambda, n+1}-\mu_{\lambda, n}\Bigr\|_{V_{0}^{*}}^2 
+ \lambda h\sum_{n=0}^{m-1}
                     \Bigl\|\frac{u_{\lambda, n+1}-u_{\lambda, n}}{h}\Bigr\|_{H}^2 
\notag \\ 
&+ \ep\Bigl(\frac{1}{2} - C_{1} h \Bigr)\|u_{\lambda, m}\|_{V}^2 
   + \frac{\ep}{2}h^2 \sum_{n=0}^{m-1} 
                        \Bigl\|\frac{u_{\lambda, n+1}-u_{\lambda, n}}{h}\Bigr\|_{V}^2
   + \int_{\Omega}\hat{\beta}(u_{\lambda, m}) 
\notag \\ 
&+ \frac{h}{2}\|\mu_{\lambda, m}\|_{H}^2  
   + \frac{h^3}{4}\sum_{n=0}^{m-1}
            \Bigl\|\frac{\mu_{\lambda, n+1}-\mu_{\lambda, n}}{h}\Bigr\|_{H}^2 
   + \frac{h}{4}\sum_{n=0}^{m-1}\|\nabla\mu_{\lambda, n+1}\|_{H}^2  
\notag \\ 
&\leq C_{1}\ep h\sum_{j=0}^{m-1}\|u_{\lambda, j}\|_{V}^2 
        + 2C_{3}\eta^2 h\sum_{j=0}^{m-1}\|u_{\lambda, j}\|_{L^4(\Omega)}^4 
        + C_{1}h\sum_{n=0}^{m-1}\|f_{n+1}\|_{V}^2 + C_{1}T 
        + 2c_{4} 
\end{align*} 
for all 
$\ep \in (0, 1]$, $\lambda \in (0, \ep)$, $h \in (0, \frac{\lambda}{2c_{3}\ep})$ 
and $m=1, ..., N$,  
whence by (A2)
there exists a constant $C_{4}>0$ such that 
for all $\ep \in (0, 1]$ and all $\lambda \in (0, \ep)$ 
there exists $h_{1} \in (0, \min\{1, \frac{\lambda}{2c_{3}\ep}\})$ such that 
\begin{align*}
&h\sum_{n=0}^{m-1}
              \Bigl\|\frac{u_{\lambda, n+1}-u_{\lambda, n}}{h} 
                                   + \mu_{\lambda, n+1}-\mu_{\lambda, n}\Bigr\|_{V_{0}^{*}}^2 
+ \lambda h\sum_{n=0}^{m-1}
                     \Bigl\|\frac{u_{\lambda, n+1}-u_{\lambda, n}}{h}\Bigr\|_{H}^2 
\notag \\ 
&+ \ep\|u_{\lambda, m}\|_{V}^2 
   + \ep h^2 \sum_{n=0}^{m-1} 
                        \Bigl\|\frac{u_{\lambda, n+1}-u_{\lambda, n}}{h}\Bigr\|_{V}^2
   + \|u_{\lambda, m}\|_{L^4(\Omega)}^4  
\notag \\ 
&+ h\|\mu_{\lambda, m}\|_{H}^2  
   + h^3 \sum_{n=0}^{m-1}
            \Bigl\|\frac{\mu_{\lambda, n+1}-\mu_{\lambda, n}}{h}\Bigr\|_{H}^2 
   + h\sum_{n=0}^{m-1}\|\nabla\mu_{\lambda, n+1}\|_{H}^2  
\notag \\ 
&\leq C_{4}\ep h\sum_{j=0}^{m-1}\|u_{\lambda, j}\|_{V}^2 
        + C_{4} h\sum_{j=0}^{m-1}\|u_{\lambda, j}\|_{L^4(\Omega)}^4 
        + C_{4} 
\end{align*}
for all $h \in (0, h_{1})$ and $m=1, ..., N$. 
Therefore, owing to the discrete Gronwall lemma (see e.g., \cite[Prop.\ 2.2.1]{Jerome}),  
it holds that 
there exists a constant $C_{5}>0$ such that 
\begin{align*}
&h\sum_{n=0}^{m-1}
              \Bigl\|\frac{u_{\lambda, n+1}-u_{\lambda, n}}{h} 
                                   + \mu_{\lambda, n+1}-\mu_{\lambda, n}\Bigr\|_{V_{0}^{*}}^2 
+ \lambda h\sum_{n=0}^{m-1}
                     \Bigl\|\frac{u_{\lambda, n+1}-u_{\lambda, n}}{h}\Bigr\|_{H}^2 
\notag \\ 
&+ \ep\|u_{\lambda, m}\|_{V}^2 
   + \ep h^2 \sum_{n=0}^{m-1} 
                        \Bigl\|\frac{u_{\lambda, n+1}-u_{\lambda, n}}{h}\Bigr\|_{V}^2
   + \|u_{\lambda, m}\|_{L^4(\Omega)}^4  
\notag \\ 
&+ h\|\mu_{\lambda, m}\|_{H}^2  
   + h^3 \sum_{n=0}^{m-1}
            \Bigl\|\frac{\mu_{\lambda, n+1}-\mu_{\lambda, n}}{h}\Bigr\|_{H}^2 
   + h\sum_{n=0}^{m-1}\|\nabla\mu_{\lambda, n+1}\|_{H}^2  
\leq C_{5}
\end{align*}
for all $\ep \in (0, 1]$, $\lambda \in (0, \ep)$, $h \in (0, h_{1})$ 
and $m=1, ..., N$.  
\end{proof}
\begin{lem}\label{timedisces2}
Let $h_{1}$ be as in Lemma \ref{timedisces1}. 
Then there exists a constant $C>0$ depending on the data such that 
\begin{align*} 
\|\underline{u}_{h}\|_{L^{\infty}(0, T; L^4(\Omega))}^4 \leq C
\end{align*}
for all $\ep \in (0, 1]$, $\lambda \in (0, \ep)$, $h \in (0, h_{1})$.  
\end{lem}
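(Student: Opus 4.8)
The plan is to observe that, by the definition \eqref{line1}, the function $\underline{u}_{h}$ takes only the finitely many values $u_{\lambda,0},\dots,u_{\lambda,N-1}$, one on each subinterval $(nh,(n+1)h)$; hence controlling $\|\underline{u}_{h}\|_{L^{\infty}(0, T; L^4(\Omega))}$ reduces to controlling $\max_{0\le n\le N-1}\|u_{\lambda,n}\|_{L^4(\Omega)}$. Since the values $u_{\lambda,1},\dots,u_{\lambda,N}$ are exactly those attained by $\overline{u}_{h}$, the only nodal value not already seen by $\overline{u}_{h}$ is the one with $n=0$, namely $u_{\lambda,0}=u_{0\ep}$, which is handled by (A5).

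Concretely, I would first record the elementary identity
\[
\|\underline{u}_{h}\|_{L^{\infty}(0, T; L^4(\Omega))}
= \max_{0\le n\le N-1}\|u_{\lambda,n}\|_{L^4(\Omega)}
\le \max\bigl\{\|u_{0\ep}\|_{L^4(\Omega)},\ \|\overline{u}_{h}\|_{L^{\infty}(0, T; L^4(\Omega))}\bigr\},
\]
which may alternatively be read off from \eqref{rem1} together with the obvious bound $\|\underline{u}_{h}\|_{L^{\infty}(0, T; L^4(\Omega))}\le\|\hat{u}_{h}\|_{L^{\infty}(0, T; L^4(\Omega))}$ (both $\underline{u}_{h}$ and $\hat{u}_{h}$ are built from the same nodal values $u_{\lambda,0},\dots,u_{\lambda,N}$, with $\hat{u}_{h}$ the piecewise-affine interpolant, so $\|\hat{u}_{h}(t)\|_{L^4(\Omega)}$ is a convex combination of two consecutive nodal $L^4$-norms and hence dominated by their maximum). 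Then I would invoke (A5), which gives $\|u_{0\ep}\|_{L^4(\Omega)}^4\le c_{4}$ uniformly in $\ep\in(0,1]$, and Lemma \ref{timedisces1}, which already supplies a constant $C>0$ depending only on the data with $\|\overline{u}_{h}\|_{L^{\infty}(0, T; L^4(\Omega))}^4\le C$ for all $\ep\in(0,1]$, $\lambda\in(0,\ep)$, $h\in(0,h_{1})$. Raising the displayed inequality to the fourth power and combining the two bounds yields the assertion with new constant $\max\{c_{4},C\}$.

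There is essentially no obstacle: the statement is a bookkeeping consequence of the $\overline{u}_{h}$-estimate in Lemma \ref{timedisces1} and the hypothesis on $u_{0\ep}$. The only point requiring a moment's care is not to overlook the initial node $u_{\lambda,0}$, which is visible to $\underline{u}_{h}$ but not to $\overline{u}_{h}$ — this is precisely why (A5), and not merely Lemma \ref{timedisces1}, enters the argument.
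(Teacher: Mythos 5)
Your proposal is correct and is essentially the paper's own argument: the paper's proof consists precisely of combining the $L^{\infty}(0,T;L^4(\Omega))$ bound on $\overline{u}_{h}$ from Lemma \ref{timedisces1} with the bound $\|u_{0\ep}\|_{L^4(\Omega)}^4\leq c_{4}$ from (A5), which is exactly your nodal-value bookkeeping. The extra care you take with the initial node $u_{\lambda,0}=u_{0\ep}$ is the right (and only) point of substance, and your write-up just makes explicit what the paper leaves implicit.
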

\begin{proof}
We can obtain this lemma by Lemma \ref{timedisces1} and (A5).
\end{proof}
\begin{lem}\label{timedisces3}
Let $h_{1}$ be as in Lemma \ref{timedisces1}. 
Then there exists a constant $C>0$ depending on the data such that 
\begin{align*} 
\|(\hat{u}_{h})_{t}\|_{L^2(0, T; V^{*})}^2 \leq C
\end{align*}
for all $\ep \in (0, 1]$, $\lambda \in (0, \ep)$, $h \in (0, h_{1})$.  
\end{lem}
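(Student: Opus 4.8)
The goal is to bound $\|(\hat{u}_h)_t\|_{L^2(0,T;V^*)}$, and the natural strategy is to read off this estimate from the first equation in \ref{Ph} (equivalently, the first equation in \ref{Plamn}), testing against $z \in V$. The plan is as follows.

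Testing the first equation in \ref{Plamn} with $z \in V$ gives
$$(\delta_h u_{\lambda,n}, z)_H = -h(\delta_h \mu_{\lambda,n}, z)_H + \int_\Omega \nabla\mu_{\lambda,n+1}\cdot\nabla z - \eta\int_\Omega u_{\lambda,n}\nabla(1-\Delta)^{-1}u_{\lambda,n}\cdot\nabla z.$$
In the $\overline{(\cdot)}/\hat{(\cdot)}$ notation this reads, for a.a.\ $t$,
$$\langle (\hat u_h)_t(t), z\rangle_{V^*,V} = -h((\hat\mu_h)_t(t), z)_H + \int_\Omega \nabla\overline\mu_h(t)\cdot\nabla z - \eta\int_\Omega \underline u_h(t)\nabla(1-\Delta)^{-1}\underline u_h(t)\cdot\nabla z.$$
So I would bound each of the three terms on the right in $V^*$, uniformly in the relevant parameters. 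For the second term, $|\int_\Omega \nabla\overline\mu_h\cdot\nabla z| \le \|\nabla\overline\mu_h\|_H\|z\|_V$, which is controlled after squaring and integrating in $t$ by the bound on $\|\nabla\overline\mu_h\|_{L^2(0,T;H)}^2$ from Lemma \ref{timedisces1}. For the first term, $|h((\hat\mu_h)_t,z)_H| \le h\|(\hat\mu_h)_t\|_H\|z\|_V$; squaring and integrating gives $h^2\|(\hat\mu_h)_t\|_{L^2(0,T;H)}^2$, again bounded by Lemma \ref{timedisces1}. For the chemotaxis term, I estimate $|\int_\Omega \underline u_h\nabla(1-\Delta)^{-1}\underline u_h\cdot\nabla z| \le \|\underline u_h\nabla(1-\Delta)^{-1}\underline u_h\|_H\|z\|_V$ and then reuse the computation \eqref{b6}: by H\"older with the $L^4$ norm and the elliptic regularity bound $\|(1-\Delta)^{-1}\underline u_h\|_{W^{2,4}(\Omega)} \le C\|\underline u_h\|_{L^4(\Omega)}$ together with $W^{2,4}\hookrightarrow W^{1,4}$, one gets $\|\underline u_h\nabla(1-\Delta)^{-1}\underline u_h\|_H^2 \le C\|\underline u_h\|_{L^4(\Omega)}^4$, which is bounded uniformly by Lemma \ref{timedisces2}.

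Putting these together, taking the supremum over $z \in V$ with $\|z\|_V \le 1$, squaring, and integrating over $(0,T)$ yields
$$\|(\hat u_h)_t\|_{L^2(0,T;V^*)}^2 \le C\Bigl( h^2\|(\hat\mu_h)_t\|_{L^2(0,T;H)}^2 + \|\nabla\overline\mu_h\|_{L^2(0,T;H)}^2 + \eta^2\|\underline u_h\|_{L^\infty(0,T;L^4(\Omega))}^4 \Bigr) \le C$$
for all $\ep\in(0,1]$, $\lambda\in(0,\ep)$, $h\in(0,h_1)$, which is the claim.

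I do not expect any serious obstacle here; this is a routine duality estimate, and every ingredient has already been prepared. The only point requiring a little care is to make sure the right-hand side of the tested first equation in \ref{Plamn} is interpreted as an element of $V^*$ acting on test functions $z \in V$ (not merely $z \in V_0$), which is fine since $\delta_h u_{\lambda,n}$, $\delta_h\mu_{\lambda,n}$, $\nabla\mu_{\lambda,n+1}$ and $\underline u_h\nabla(1-\Delta)^{-1}\underline u_h$ all pair with arbitrary $z \in V$; one should also note that the constant-in-space contributions pair to the conserved-mass identity \eqref{Plamnhozon}, so there is no inconsistency with testing against nonzero-mean functions. The mildly technical piece is just re-citing the elliptic regularity chain from \eqref{b6} to handle the convective term; everything else is Cauchy--Schwarz plus the two preceding lemmas.
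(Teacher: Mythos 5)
Your proof is correct and follows essentially the same route as the paper: a duality estimate in $V^{*}$ based on the first equation in \ref{Ph}, with the gradient term controlled by Lemma \ref{timedisces1}, the $h(\hat{\mu}_{h})_{t}$ contribution by the bound on $h^2\|(\hat{\mu}_{h})_{t}\|_{L^2(0, T; H)}^2$ from Lemma \ref{timedisces1}, and the chemotaxis term by the elliptic regularity computation \eqref{b6} combined with Lemma \ref{timedisces2}; the paper merely estimates $(\hat{u}_{h})_{t} + h(\hat{\mu}_{h})_{t}$ in $V^{*}$ first (testing against $F^{-1}$ of that quantity) and then peels off $h(\hat{\mu}_{h})_{t}$, which is the same argument up to rearrangement. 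The only blemish is a harmless sign slip in your tested identity (the last two right-hand terms should carry the opposite signs), which does not affect the absolute-value estimates.
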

\begin{proof}
Since it follows from the first equation in \ref{Ph} that 
\begin{align*}
&\|(\hat{u}_{h})_{t}(t) + h(\hat{\mu}_{h})_{t}(t)\|_{V^{*}}^2 
\notag \\ 
&= \langle (\hat{u}_{h})_{t}(t) + h(\hat{\mu}_{h})_{t}(t),  
                      F^{-1}((\hat{u}_{h})_{t}(t) + h(\hat{\mu}_{h})_{t}(t)) \rangle_{V^{*}, V}
\notag \\ 
&= -\int_{\Omega}\nabla\overline{\mu}_{h}(t)\cdot
                               \nabla F^{-1}((\hat{u}_{h})_{t}(t) + h(\hat{\mu}_{h})_{t}(t)) 
\notag \\
&\,\quad + \eta\int_{\Omega}
                      \underline{u}_{h}(t)\nabla(1-\Delta)^{-1}\underline{u}_{h}(t)\cdot
                                       \nabla F^{-1}((\hat{u}_{h})_{t}(t) + h(\hat{\mu}_{h})_{t}(t)), 
\end{align*}
we see from the Young inequality, Lemmas \ref{timedisces1} and \ref{timedisces2} that 
there exists a constant $C_1 > 0$ such that 
\begin{align*}
\|(\hat{u}_{h})_{t} + h(\hat{\mu}_{h})_{t}\|_{L^2(0, T; V^{*})}^2 
\leq C_{1} 
\end{align*}
for all $\ep \in (0, 1]$, $\lambda \in (0, \ep)$, $h \in (0, h_{1})$. 
Then Lemma \ref{timedisces1} leads to Lemma \ref{timedisces3}. 
\end{proof}
\begin{lem}\label{timedisces4}
Let $h_{1}$ be as in Lemma \ref{timedisces1}. 
Then there exists a constant $C>0$ depending on the data such that 
\begin{align*} 
\ep^2\|\overline{u}_{h}\|_{L^2(0, T; W)}^2 \leq C
\end{align*}
for all $\ep \in (0, 1]$, $\lambda \in (0, \ep)$, $h \in (0, h_{1})$.  
\end{lem}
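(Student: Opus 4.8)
The plan is to read off $\ep\Delta\overline{u}_{h}$ from the second equation of \ref{Ph} and then test it against $\Delta\overline{u}_{h}$, rather than to estimate $\ep\Delta\overline{u}_{h}$ directly in $L^2(0,T;H)$: the latter would force one to control $\beta(\overline{u}_{h})$ and $\overline{\mu}_{h}$ in $L^2(0,T;H)$, and neither bound is available from Lemma \ref{timedisces1}, which for $\overline{\mu}_{h}$ only yields $\|\nabla\overline{\mu}_{h}\|_{L^2(0,T;H)}$ and $\sqrt{h}\,\|\overline{\mu}_{h}\|_{L^{\infty}(0,T;H)}$. Rewriting the second identity of \ref{Ph} as
\[
\ep\Delta\overline{u}_{h} = \lambda(\hat{u}_{h})_{t} + \beta(\overline{u}_{h}) + \pi_{\ep}(\overline{u}_{h}) - \overline{f}_{h} - \overline{\mu}_{h}
\qquad \mbox{a.e.\ on}\ \Omega\times(0,T),
\]
and recalling that $\overline{u}_{h}(t)\in W$ with $\beta(\overline{u}_{h}(t))\in H$ for a.a.\ $t$, I would multiply by $\Delta\overline{u}_{h}(t)$ and integrate over $\Omega$.

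The key structural point is that $(\beta(\overline{u}_{h}),\Delta\overline{u}_{h})_{H} = -(-\Delta\overline{u}_{h},\beta(\overline{u}_{h}))_{H}\le 0$ by Lemma \ref{PT}, so this term is simply discarded. The four remaining terms are then routine: $\lambda((\hat{u}_{h})_{t},\Delta\overline{u}_{h})_{H}$ is split off by the Young inequality, leaving $\tfrac{2\lambda^{2}}{\ep}\|(\hat{u}_{h})_{t}\|_{H}^{2}$, whose time integral is $\tfrac{2\lambda}{\ep}$ times the quantity $\lambda\|(\hat{u}_{h})_{t}\|_{L^2(0,T;H)}^{2}$ bounded in Lemma \ref{timedisces1} and hence $\le C$ because $\lambda<\ep$; the $\pi_{\ep}(\overline{u}_{h})$-term is controlled by the Young inequality and (A4) (via $\|\pi_{\ep}'\|_{L^{\infty}(\mathbb{R})}+|\pi_{\ep}(0)|\le c_{3}\ep$) together with $\ep\|\overline{u}_{h}\|_{L^{\infty}(0,T;V)}^{2}\le C$ from Lemma \ref{timedisces1}; the $\overline{f}_{h}$-term is handled by the Young inequality and (A3), at the cost of a factor $\ep^{-1}$; and the term $-(\overline{\mu}_{h},\Delta\overline{u}_{h})_{H}$ \emph{must} be integrated by parts, using $\partial_{\nu}\overline{u}_{h}=0$, to become $(\nabla\overline{\mu}_{h},\nabla\overline{u}_{h})_{H}$, whose time integral is bounded via Cauchy--Schwarz by $\|\nabla\overline{\mu}_{h}\|_{L^2(0,T;H)}\|\nabla\overline{u}_{h}\|_{L^2(0,T;H)}$ --- both factors controlled by Lemma \ref{timedisces1}, the second only up to a factor $\ep^{-1/2}$.

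Absorbing the $\tfrac{\ep}{8}\|\Delta\overline{u}_{h}\|_{H}^{2}$ terms into the left-hand side and integrating over $(0,T)$ then gives $\ep\|\Delta\overline{u}_{h}\|_{L^2(0,T;H)}^{2}\le C\ep^{-1}$, i.e.\ $\ep^{2}\|\Delta\overline{u}_{h}\|_{L^2(0,T;H)}^{2}\le C$. Finally, standard elliptic regularity for the Neumann Laplacian yields $\|\overline{u}_{h}\|_{W}^{2}\le C(\|\Delta\overline{u}_{h}\|_{H}^{2}+\|\overline{u}_{h}\|_{H}^{2})$, and since Lemma \ref{timedisces1} also gives $\ep^{2}\|\overline{u}_{h}\|_{L^2(0,T;H)}^{2}\le \ep T\bigl(\ep\|\overline{u}_{h}\|_{L^{\infty}(0,T;V)}^{2}\bigr)\le C$, the asserted estimate $\ep^{2}\|\overline{u}_{h}\|_{L^2(0,T;W)}^{2}\le C$ follows. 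The only real difficulty is the bookkeeping of the powers of $\ep$ and $\lambda$: one must recognize that $\overline{\mu}_{h}$ and $\beta(\overline{u}_{h})$ cannot be estimated directly in $L^2(0,T;H)$, so that Lemma \ref{PT} for the $\beta$-term and the Neumann integration by parts for the $\overline{\mu}_{h}$-term are both indispensable; every other estimate is a direct application of the Young inequality and Lemma \ref{timedisces1}.
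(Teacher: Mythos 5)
Your proof is correct, and it reaches the paper's estimate by a partly different route. The shared core is the same: read off $\ep\Delta$ of the discrete solution from the second equation and test against it (the paper multiplies the second equation of \ref{Plamn} by $-\ep h\Delta u_{\lambda,n+1}$ and sums in $n$, which is equivalent to your time-integrated formulation), and discard the $\beta$-term by the sign property of Lemma \ref{PT}; both arguments also finish with elliptic regularity plus the $H$-bound on $\overline{u}_{h}$. The genuine differences are in the chemical-potential and $\lambda$-terms. For $(\overline{\mu}_{h},\Delta\overline{u}_{h})_{H}$ the paper does not integrate by parts against $\nabla\overline{u}_{h}$: it writes $\ep h(\mu_{\lambda,n+1},-\Delta u_{\lambda,n+1})_{H}=\ep h(-\Delta\mu_{\lambda,n+1},u_{\lambda,n+1})_{H}$ and substitutes the \emph{first} discrete equation, which produces a telescoping $\frac{\ep}{2}\|u_{\lambda,n+1}\|_{H}^2$ sum, a term with $h^{2}\delta_{h}\mu_{\lambda,n}$, and the chemotaxis term $\eta\ep h\int_{\Omega}u_{\lambda,n}\nabla(1-\Delta)^{-1}u_{\lambda,n}\cdot\nabla u_{\lambda,n+1}$, the latter handled via the $W^{2,4}$ elliptic estimate and Lemma \ref{timedisces2}. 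Your route --- integration by parts using $\partial_{\nu}\overline{u}_{h}=0$ and Cauchy--Schwarz with $\|\nabla\overline{\mu}_{h}\|_{L^2(0,T;H)}\le C$ and $\|\nabla\overline{u}_{h}\|_{L^2(0,T;H)}\le C\ep^{-1/2}$ from Lemma \ref{timedisces1} --- avoids the first equation and the chemotaxis term entirely, and the $\ep^{-1/2}$ loss is harmless because the target estimate carries $\ep^{2}$. Likewise, for $\lambda(\hat{u}_{h})_{t}$ the paper uses the discrete telescoping identity for $\frac{\ep\lambda}{2}\|\nabla u_{\lambda,n}\|_{H}^{2}$ (controlled through (A5)), whereas you use Young's inequality together with $\lambda<\ep$ and the bound $\lambda\|(\hat{u}_{h})_{t}\|_{L^2(0,T;H)}^{2}\le C$; both are valid. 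What your version buys is a shorter argument with fewer ingredients (no $W^{2,4}$ estimate, no Lemma \ref{timedisces2} at this stage); what the paper's version buys is a slightly sharper bookkeeping in which the left-hand side appears directly with the weight $\ep^{2}$ rather than being upgraded from $\ep\|\Delta\overline{u}_{h}\|_{L^2(0,T;H)}^{2}\le C\ep^{-1}$, but the conclusions coincide.
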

\begin{proof}
We have from the second equation in \ref{Plamn} that 
\begin{align}\label{c1}
&\ep^2 h \|\Delta u_{\lambda, n+1}\|_{H}^2 
= \ep^2 h (-\Delta u_{\lambda, n+1}, -\Delta u_{\lambda, n+1})_{H} 
\notag \\ 
&= \ep h (\mu_{\lambda, n+1}, -\Delta u_{\lambda, n+1})_{H} 
    - \frac{\ep\lambda}{2}
          (\|\nabla u_{\lambda, n+1}\|_{H}^2 - \|\nabla u_{\lambda, n}\|_{H}^2 
                  + \|\nabla u_{\lambda, n+1} - \nabla u_{\lambda, n}\|_{H}^2) 
\notag \\ 
&\,\quad - \ep h (\beta(u_{\lambda, n+1}), -\Delta u_{\lambda, n+1})_{H} 
   - \ep h (\pi_{\ep}(u_{\lambda, n+1})), -\Delta u_{\lambda, n+1})_{H} 
\notag \\
&\,\quad+ \ep h(f_{n+1}, -\Delta u_{\lambda, n+1})_{H}. 
\end{align}
Here Lemma \ref{PT} implies that 
\begin{align}\label{c2}
- \ep h (\beta(u_{\lambda, n+1}), -\Delta u_{\lambda, n+1})_{H} \leq 0. 
\end{align}
We deduce from the first equation in \ref{Plamn} that 
\begin{align}\label{c3}
&\ep h (\mu_{\lambda, n+1}, -\Delta u_{\lambda, n+1})_{H} 
= \ep h (-\Delta\mu_{\lambda, n+1}, u_{\lambda, n+1})_{H} 
\notag \\ 
&= -\frac{\ep}{2}
            (\|u_{\lambda, n+1}\|_{H}^2 - \|u_{\lambda, n}\|_{H}^2 
                             + \|u_{\lambda, n+1}-u_{\lambda, n}\|_{H}^2) 
    -\ep h^2 \Bigl(\frac{\mu_{\lambda, n+1}-\mu_{\lambda, n}}{h}, 
                                                                          u_{\lambda, n+1} \Bigr)_{H} 
\notag \\ 
  &\,\quad+ \eta\ep h\int_{\Omega}
                       u_{\lambda, n}\nabla(1-\Delta)^{-1}u_{\lambda, n}\cdot
                                                                             \nabla u_{\lambda, n+1}. 
\end{align}
We infer from 
the continuity of 
the embedding $W^{2, 4}(\Omega) \hookrightarrow W^{1, 4}(\Omega)$, 
standard elliptic regularity theory (\cite{F-1969}),   
Lemmas \ref{timedisces1} and \ref{timedisces2}   
that 
there exist constants $C_{1}, C_{2}, C_{3} > 0$ such that 
\begin{align}\label{c4}
&\eta\ep h\int_{\Omega}
                       u_{\lambda, n}\nabla(1-\Delta)^{-1}u_{\lambda, n}\cdot
                                                                             \nabla u_{\lambda, n+1}
\notag \\ 
&\leq \eta\ep h\|u_{\lambda, n}\|_{L^4(\Omega)}
                            \|\nabla(1-\Delta)^{-1}u_{\lambda, n}\|_{L^4(\Omega)}
                                                                        \|\nabla u_{\lambda, n+1}\|_{H} 
\notag \\ 
&\leq C_{1}\eta\ep h\|u_{\lambda, n}\|_{L^4(\Omega)}
                            \|(1-\Delta)^{-1}u_{\lambda, n}\|_{W^{2, 4}(\Omega)}
                                                                       \|\nabla u_{\lambda, n+1}\|_{H} 
\notag \\ 
&\leq C_{2}\eta\ep h\|u_{\lambda, n}\|_{L^4(\Omega)}^2\|\nabla u_{\lambda, n+1}\|_{H} 
\notag \\
&\leq C_{3}\eta h
\end{align}
for all $\ep \in (0, 1]$, $\lambda \in (0, \ep)$, $h \in (0, h_{1})$. 
The condition (A4) and Lemma \ref{timedisces1} yield that 
there exists a constant $C_{4}>0$ such that 
\begin{align}\label{c5}
- \ep h (\pi_{\ep}(u_{\lambda, n+1})), -\Delta u_{\lambda, n+1})_{H}
\leq c_{3}\ep h\|\nabla u_{\lambda, n+1}\|_{H}^2  
\leq C_{4}h
\end{align}
for all $\ep \in (0, 1]$, $\lambda \in (0, \ep)$, $h \in (0, h_{1})$. 
Therefore by \eqref{c1}-\eqref{c5}, the Young inequality, 
summing over $n=0, ..., m-1$ with $1 \leq m \leq N$, (A5)  
and Lemma \ref{timedisces1} 
there exists a constant $C_{5}>0$ such that 
\begin{align*}
\ep^2 \|\Delta\overline{u}_{h}\|_{L^2(0, T; H)}^2 \leq C_{5}
\end{align*}
for all $\ep \in (0, 1]$, $\lambda \in (0, \ep)$, $h \in (0, h_{1})$ 
and then 
we can obtain Lemma \ref{timedisces4} by Lemma \ref{timedisces1} 
and the elliptic regularity theory. 
\end{proof}
\begin{lem}\label{timedisces5}
Let $h_{1}$ be as in Lemma \ref{timedisces1}. 
Then there exists a constant $C>0$ depending on the data such that 
\begin{align*} 
\|\beta(\overline{u}_{h})\|_{L^2(0, T; L^1(\Omega))}^2 \leq C
\end{align*}
for all $\ep \in (0, 1]$, $\lambda \in (0, \ep)$, $h \in (0, h_{1})$.  
\end{lem}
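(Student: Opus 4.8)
The plan is to bound $\|\beta(\overline{u}_h(t))\|_{L^1(\Omega)}$ by $\int_{\Omega}\beta(\overline{u}_h(t))(\overline{u}_h(t)-m_0)\,dx$ and then to control the latter by testing the second equation of \ref{Ph} with $\overline{u}_h(t)-m_0$. The first step rests on an elementary consequence of $m_0\in\mathrm{Int}\,D(\beta)$: choosing $r_0>0$ with $[m_0-r_0,m_0+r_0]\subset\mathrm{Int}\,D(\beta)$ and $M_0:=\max\{|\beta(m_0-r_0)|,|\beta(m_0+r_0)|\}$, splitting according to the sign of $\beta(v(x))$ and comparing, by monotonicity of $\beta$, with the value of $\beta$ at $m_0+r_0$ or at $m_0-r_0$ gives the pointwise bound $\beta(v(x))(v(x)-m_0)\geq r_0|\beta(v(x))|-M_0|v(x)-m_0|-M_0r_0$ for a.e.\ $x\in\Omega$ and any measurable $v$ with $\beta(v)\in L^1(\Omega)$; integration over $\Omega$ yields $r_0\|\beta(v)\|_{L^1(\Omega)}\leq\int_{\Omega}\beta(v)(v-m_0)+M_0\|v-m_0\|_{L^1(\Omega)}+M_0r_0|\Omega|$. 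Since $\beta(\overline{u}_h(t))\in H$ for a.a.\ $t$ (by the structure of \ref{Plamn} and $u_{\lambda,n+1}\in W$) and $\|\overline{u}_h(t)-m_0\|_{L^1(\Omega)}\leq|\Omega|^{3/4}\|\overline{u}_h(t)\|_{L^4(\Omega)}+|m_0||\Omega|$ is bounded by Lemma \ref{timedisces1}, it suffices to show that $\int_0^T\bigl(\int_{\Omega}\beta(\overline{u}_h)(\overline{u}_h-m_0)\bigr)^2dt$ is bounded by a constant depending only on the data.

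For that, I would rewrite the second equation of \ref{Ph} as $\beta(\overline{u}_h)=\overline{\mu}_h-\lambda(\hat{u}_h)_t+\ep\Delta\overline{u}_h-\pi_{\ep}(\overline{u}_h)+\overline{f}_h$, multiply by $\overline{u}_h(t)-m_0$ and integrate over $\Omega$. Two of the five resulting terms carry a favourable sign: since $\partial_{\nu}\overline{u}_h=0$, the diffusion term equals $-\ep\|\nabla\overline{u}_h\|_H^2\leq0$; and, writing $\overline{\mu}_h=(\overline{\mu}_h-\langle\overline{\mu}_h\rangle)+\langle\overline{\mu}_h\rangle$ with $\langle\overline{\mu}_h\rangle:=\frac1{|\Omega|}\int_{\Omega}\overline{\mu}_h$ and using the discrete mass identity $\int_{\Omega}\overline{u}_h(t)=|\Omega|m_0-h\int_{\Omega}\overline{\mu}_h(t)$ (which follows by summing \eqref{Plamnhozon} and invoking (A5), and uses the Neumann conditions on $\overline{\mu}_h$ and on $(1-\Delta)^{-1}$), the constant part contributes $\langle\overline{\mu}_h\rangle\int_{\Omega}(\overline{u}_h-m_0)=-h|\Omega|\langle\overline{\mu}_h\rangle^2\leq0$, while the oscillatory part is bounded by the Poincar\'e--Wirtinger inequality and the $L^4$-bound of Lemma \ref{timedisces1} as $\int_{\Omega}(\overline{\mu}_h-\langle\overline{\mu}_h\rangle)(\overline{u}_h-m_0)\leq C\|\nabla\overline{\mu}_h\|_H$. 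The three remaining terms I would handle by the Cauchy--Schwarz inequality, (A4), and $\|\overline{u}_h(t)-m_0\|_H\leq C$ (from Lemma \ref{timedisces1}), obtaining the estimates $C\lambda\|(\hat{u}_h)_t\|_H$, $C\|\pi_{\ep}(\overline{u}_h)\|_H\leq C$, and $C\|\overline{f}_h\|_H$, respectively. Putting these together gives $\int_{\Omega}\beta(\overline{u}_h(t))(\overline{u}_h(t)-m_0)\leq C\bigl(1+\|\nabla\overline{\mu}_h(t)\|_H+\lambda\|(\hat{u}_h)_t(t)\|_H+\|\overline{f}_h(t)\|_H\bigr)$.

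To conclude, I would square this inequality, integrate over $(0,T)$, and invoke the estimates already available: $\int_0^T\|\nabla\overline{\mu}_h\|_H^2\leq C$ and $\lambda\int_0^T\|(\hat{u}_h)_t\|_H^2\leq C$ (hence $\lambda^2\int_0^T\|(\hat{u}_h)_t\|_H^2\leq C$ since $\lambda\leq1$) from Lemma \ref{timedisces1}, and $\int_0^T\|\overline{f}_h\|_H^2\leq\|f\|_{L^2(0,T;H)}^2$ from Jensen's inequality and (A3). Combined with the $L^1$-control estimate of the first paragraph, this yields $\|\beta(\overline{u}_h)\|_{L^2(0,T;L^1(\Omega))}^2\leq C$ uniformly in $\ep\in(0,1]$, $\lambda\in(0,\ep)$ and $h\in(0,h_1)$. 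I expect the main obstacle to be the chemical-potential term: the spatial mean of $\overline{u}_h$ equals $m_0$ only up to an $O(\sqrt h)$ defect, so one cannot freely apply the $L^1$-control estimate centred at $\overline{u}_h$'s own mean without further shrinking $h_1$; testing against the \emph{fixed} value $m_0$ circumvents this, precisely because the mass-defect contribution $-h|\Omega|\langle\overline{\mu}_h\rangle^2$ then appears with the right sign.
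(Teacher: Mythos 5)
Your proposal is correct and follows essentially the same strategy as the paper: control $\|\beta(\overline{u}_h)\|_{L^1(\Omega)}$ through $\int_\Omega\beta(\overline{u}_h)(\overline{u}_h-m_0)$ via the coercivity of $\beta$ around the interior point $m_0$, test the second equation with $\overline{u}_h-m_0$, and exploit the discrete mass identity coming from \eqref{Plamnhozon} so that only $\|\nabla\overline{\mu}_h\|_H$ (plus terms already bounded in Lemma \ref{timedisces1}) enters. The only differences are cosmetic: you derive the inequality $\beta(r)(r-m_0)\geq r_0|\beta(r)|-M_0|r-m_0|-M_0r_0$ elementarily instead of citing \cite{GMS-2009} through the Yosida approximation, and you handle the chemical-potential term by splitting off its spatial mean and dropping $-h|\Omega|\langle\overline{\mu}_h\rangle^2$, whereas the paper pairs $\mu_{\lambda,n+1}$ with the zero-mean quantity $u_{\lambda,n+1}+h\mu_{\lambda,n+1}-m_0$ in the $V_0^*$--$V_0$ duality and drops $-h\|\mu_{\lambda,n+1}\|_H^2$.
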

\begin{proof}
Let $\tau>0$ and let $\beta_{\tau}$ be the Yosida approximation of $\beta$ 
on $\mathbb{R}$. 
Then there exist constants $C_{1}, C_{2} > 0$ such that 
\begin{align*}
\beta_{\tau}(r)(r-m_{0}) \geq C_{1}|\beta_{\tau}(r)| - C_{2}
\end{align*}
for all $r \in \mathbb{R}$ and all $\tau>0$ 
(see e.g., \cite[Section 5, p.\ 908]{GMS-2009}), 
where $m_{0}$ is as in (A5).    
Thus, since $\beta : \mathbb{R} \to \mathbb{R}$ is single-valued maximal monotone, 
it holds that $\beta_{\tau}(r) \to \beta(r)$ in $\mathbb{R}$ 
as $\tau\searrow0$ for all $r \in D(\beta)$ 
and then the inequality 
\begin{align}\label{e1}
\beta(r)(r-m_{0}) \geq C_{1}|\beta(r)| - C_{2}
\end{align}
holds for all $r \in D(\beta)$. 
The second equation in \ref{Plamn} leads to the identity 
\begin{align}\label{e2}
&h^{1/2}(\beta(u_{\lambda, n+1}), u_{\lambda, n+1}-m_{0})_{H} 
\notag \\ 
&= h^{1/2}(\mu_{\lambda, n+1}, u_{\lambda, n+1}+h\mu_{\lambda, n+1}-m_{0})_{H} 
    - h^{3/2}\|\mu_{\lambda, n+1}\|_{H}^2 
\notag \\
&\,\quad- \lambda h^{1/2}\Bigl(\frac{u_{\lambda, n+1}-u_{\lambda, n}}{h}, 
                                                                      u_{\lambda, n+1}-m_{0}\Bigr)_{H} 
- h^{1/2}\ep\|\nabla u_{\lambda, n+1}\|_{H}^2 
\notag \\
&\,\quad- h^{1/2}(\pi_{\ep}(u_{\lambda, n+1}), u_{\lambda, n+1}-m_{0})_{H} 
+ h^{1/2}(f_{n+1}, u_{\lambda, n+1}-m_{0})_{H}. 
\end{align}
Here, since it follows from \eqref{Plamnhozon} and (A5) that 
\begin{align*}
\frac{1}{|\Omega|}\int_{\Omega}(u_{\lambda, j} + h\mu_{\lambda, j}) 
= \frac{1}{|\Omega|}\int_{\Omega}u_{0\ep} 
= m_{0}
\end{align*}
for $j = 0, ..., N$, 
we derive from the continuity of the embedding $H \hookrightarrow V_{0}^{*}$ that 
there exists a constant $C_{3}>0$ such that 
\begin{align}\label{e4}
&h^{1/2}(\mu_{\lambda, n+1}, u_{\lambda, n+1}+h\mu_{\lambda, n+1}-m_{0})_{H} 
\notag \\
&=h^{1/2}\Bigl\langle u_{\lambda, n+1}+h\mu_{\lambda, n+1}-m_{0}, 
             \mu_{\lambda, n+1} - \frac{1}{|\Omega|}\int_{\Omega}\mu_{\lambda, n+1}        
                                                                            \Bigr\rangle_{V_{0}^{*}, V_{0}}
\notag \\ 
&\leq h^{1/2}\|u_{\lambda, n+1}+h\mu_{\lambda, n+1}-m_{0}\|_{V_{0}^{*}}
                                                                  \|\nabla\mu_{\lambda, n+1}\|_{H} 
\notag \\ 
&\leq C_{3}h^{1/2}\|u_{\lambda, n+1}+h\mu_{\lambda, n+1}-m_{0}\|_{H}
                                                                  \|\nabla\mu_{\lambda, n+1}\|_{H} 
\end{align}
for all $\ep \in (0, 1]$, $\lambda \in (0, \ep)$, $h \in (0, h_{1})$. 
Therefore we can obtain Lemma \ref{timedisces5} 
by \eqref{e1}-\eqref{e4} and Lemma \ref{timedisces1}. 
\end{proof}
\begin{lem}\label{timedisces6}
Let $h_{1}$ be as in Lemma \ref{timedisces1}. 
Then there exists a constant $C>0$ depending on the data such that 
\begin{align*} 
\|\overline{\mu}_{h}\|_{L^2(0, T; V)}^2 
+ \|\beta(\overline{u}_{h})\|_{L^2(0, T; H)}^2 \leq C
\end{align*}
for all $\ep \in (0, 1]$, $\lambda \in (0, \ep)$, $h \in (0, h_{1})$.  
\end{lem}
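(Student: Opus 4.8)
The plan is to split the argument into two parts: first bound $\overline{\mu}_h$ in $L^2(0,T;V)$, and then use that bound to control $\beta(\overline{u}_h)$ in $L^2(0,T;H)$. Throughout I will use, without further comment, that $u_{\lambda,n+1}\in W$ (Lemma \ref{timedisces1}), that the second equation in \ref{Plamn} then forces $\beta(u_{\lambda,n+1})=\mu_{\lambda,n+1}-\lambda\delta_h u_{\lambda,n}+\ep\Delta u_{\lambda,n+1}-\pi_\ep(u_{\lambda,n+1})+f_{n+1}\in H$, and that the uniform $L^\infty(0,T;L^4(\Omega))$ bound on $\overline{u}_h$ from Lemma \ref{timedisces1} yields a uniform bound on each $\|u_{\lambda,n+1}\|_H$.

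\emph{Part 1.} Since $\|\overline{\mu}_h\|_{L^2(0,T;V)}^2=\|\nabla\overline{\mu}_h\|_{L^2(0,T;H)}^2+\|\overline{\mu}_h\|_{L^2(0,T;H)}^2$ and the gradient term is already bounded by Lemma \ref{timedisces1}, it suffices to control the spatial means $\overline{m}_{n+1}:=\frac{1}{|\Omega|}\int_\Omega\mu_{\lambda,n+1}$ and then apply the Poincaré--Wirtinger inequality $\|\mu_{\lambda,n+1}\|_H^2\le C\|\nabla\mu_{\lambda,n+1}\|_H^2+|\Omega|(\overline{m}_{n+1})^2$. Taking the spatial mean of the second equation in \ref{Plamn}, and using $\int_\Omega\Delta u_{\lambda,n+1}=0$ since $u_{\lambda,n+1}\in W$, gives
\[
|\Omega|\,\overline{m}_{n+1}=\lambda\int_\Omega\delta_h u_{\lambda,n}+\int_\Omega\beta(u_{\lambda,n+1})+\int_\Omega\pi_\ep(u_{\lambda,n+1})-\int_\Omega f_{n+1}.
\]
Estimating each integral by the $L^1(\Omega)$-norm of the integrand and using $\|\cdot\|_{L^1(\Omega)}\le|\Omega|^{1/2}\|\cdot\|_H$ on the first, third and fourth terms,
\[
(\overline{m}_{n+1})^2\le C\bigl(\lambda^2\|\delta_h u_{\lambda,n}\|_H^2+\|\beta(u_{\lambda,n+1})\|_{L^1(\Omega)}^2+\|\pi_\ep(u_{\lambda,n+1})\|_H^2+\|f_{n+1}\|_H^2\bigr).
\]
Multiplying by $h$ and summing over $n=0,\dots,N-1$, the four sums are controlled by, respectively, $\lambda\bigl(\lambda\|(\hat u_h)_t\|_{L^2(0,T;H)}^2\bigr)\le\lambda C\le C$ (Lemma \ref{timedisces1}), $\|\beta(\overline{u}_h)\|_{L^2(0,T;L^1(\Omega))}^2\le C$ (Lemma \ref{timedisces5}), $C\ep^2T\le C$ (from (A4), $|\pi_\ep(r)|\le c_3\ep(1+|r|)$, and the uniform $H$-bound on $u_{\lambda,n+1}$), and $\|f\|_{L^2(0,T;H)}^2\le C$ (from (A3) and Jensen's inequality). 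Hence $h\sum_n(\overline{m}_{n+1})^2\le C$, so $\|\overline{\mu}_h\|_{L^2(0,T;V)}^2\le C$.

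\emph{Part 2.} Test the second equation in \ref{Plamn} by $\beta(u_{\lambda,n+1})$ in $H$; since $u_{\lambda,n+1}\in W$ and $\beta(u_{\lambda,n+1})\in H$, Lemma \ref{PT} gives $\ep\bigl(-\Delta u_{\lambda,n+1},\beta(u_{\lambda,n+1})\bigr)_H\ge0$, so
\[
\|\beta(u_{\lambda,n+1})\|_H^2\le\bigl(\mu_{\lambda,n+1}-\lambda\delta_h u_{\lambda,n}-\pi_\ep(u_{\lambda,n+1})+f_{n+1},\ \beta(u_{\lambda,n+1})\bigr)_H.
\]
By the Cauchy--Schwarz and Young inequalities the right-hand side is at most $\frac12\|\beta(u_{\lambda,n+1})\|_H^2+C\bigl(\|\mu_{\lambda,n+1}\|_H^2+\lambda^2\|\delta_h u_{\lambda,n}\|_H^2+\|\pi_\ep(u_{\lambda,n+1})\|_H^2+\|f_{n+1}\|_H^2\bigr)$. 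Absorbing the first term, multiplying by $h$ and summing over $n$, the four remaining sums equal $\|\overline{\mu}_h\|_{L^2(0,T;H)}^2\le C$ (Part 1) and the same three quantities bounded in Part 1. Therefore $\|\beta(\overline{u}_h)\|_{L^2(0,T;H)}^2\le C$, which together with Part 1 proves the lemma.

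The main obstacle is Part 1, and within it the spatial mean $\overline{m}_{n+1}$ of $\mu_{\lambda,n+1}$: its $\int_\Omega\beta(u_{\lambda,n+1})$ contribution is not controlled by the energy estimate of Lemma \ref{timedisces1} and must be absorbed through the $L^1(\Omega)$-estimate of Lemma \ref{timedisces5} — pairing it against the $L^1$-norm rather than the $H$-norm is what prevents a circular reabsorption of $\|\beta(u_{\lambda,n+1})\|_H^2$, and this is precisely the role that lemma plays. Once $\overline{\mu}_h$ is bounded in $L^2(0,T;V)$, Part 2 is then routine.
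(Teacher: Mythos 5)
Your proposal is correct, and its first half is exactly the paper's argument: take the spatial mean of the second equation in \ref{Ph} (the $\ep\Delta$ term drops by the Neumann condition), bound the mean of $\beta(\overline{u}_{h})$ via the $L^2(0,T;L^1(\Omega))$ estimate of Lemma \ref{timedisces5}, the $\lambda(\hat{u}_{h})_{t}$ term via Lemma \ref{timedisces1}, the $\pi_{\ep}$ and $\overline{f}_{h}$ terms via (A4) and (A3), and conclude with Poincar\'e--Wirtinger and the gradient bound from Lemma \ref{timedisces1}. Where you genuinely diverge is the second half: the paper simply solves the second equation in \ref{Ph} for $\beta(\overline{u}_{h})$ and estimates it by the triangle inequality, which requires the bound $\|\ep\Delta\overline{u}_{h}\|_{L^2(0,T;H)}\leq C$ supplied by Lemma \ref{timedisces4}; you instead test the equation with $\beta(u_{\lambda,n+1})$ and discard the term $\ep\bigl(-\Delta u_{\lambda,n+1},\beta(u_{\lambda,n+1})\bigr)_{H}\geq 0$ by Lemma \ref{PT}, then close with Young's inequality. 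Your route makes the lemma independent of Lemma \ref{timedisces4} (and reuses the same sign trick the paper employs in Lemmas \ref{elliptic1} and \ref{timedisces4}), at the price of having to check the hypotheses of Lemma \ref{PT}, which you do: $\beta(u_{\lambda,n+1})\in H$ follows from the second equation since all other terms lie in $H$. One small citation slip: the regularity $u_{\lambda,n+1}\in W$ comes from Lemma \ref{existenceforPlamn}, not from Lemma \ref{timedisces1}; this does not affect the argument.
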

\begin{proof}
We see from the second equation in \ref{Ph} that 
\begin{align*}
\int_{\Omega}\overline{\mu}_{h} 
= \lambda\int_{\Omega}(\hat{u}_{h})_{t} 
   + \int_{\Omega}\beta(\overline{u}_{h}) 
   + \int_{\Omega}\pi_{\ep}(\overline{u}_{h})
   - \int_{\Omega}\overline{f}_{h}.  
\end{align*}
Thus Lemmas \ref{timedisces1} and \ref{timedisces5} mean that 
there exists a constant $C_{1}>0$ such that 
\begin{align*}
\int_{0}^{T}\left|\int_{\Omega}\overline{\mu}_{h}\right|^2 \leq C_{1}
\end{align*}
for all $\ep \in (0, 1]$, $\lambda \in (0, \ep)$, $h \in (0, h_{1})$, 
and hence we have from Lemma \ref{timedisces1} and 
the Poincar\'e--Wirtinger inequality 
that there exists a constant $C_{2}>0$ such that 
\begin{align}\label{f1}
\|\overline{\mu}_{h}\|_{L^2(0, T; V)}^2 \leq C_{2}
\end{align}
for all $\ep \in (0, 1]$, $\lambda \in (0, \ep)$, $h \in (0, h_{1})$. 
Therefore the second equation in \ref{Ph}, 
Lemmas \ref{timedisces1} and \ref{timedisces4},  
and \eqref{f1} 
that there exists a constant $C_{3}>0$ such that 
\begin{align*}
\|\beta(\overline{u}_{h})\|_{L^2(0, T; H)}^2 \leq C_{3}
\end{align*}
for all $\ep \in (0, 1]$, $\lambda \in (0, \ep)$, $h \in (0, h_{1})$. 
\end{proof}
\begin{lem}\label{timedisces7}
Let $h_{1}$ be as in Lemma \ref{timedisces1}. 
Then there exists a constant $C>0$ depending on the data such that 
\begin{align*} 
&\lambda\|\hat{u}_{h}\|_{H^1(0, T; H)}^2 
+ \ep\|\hat{u}_{h}\|_{L^{\infty}(0, T; V)}^2 
+ \|\hat{u}_{h}\|_{L^{\infty}(0, T; L^4(\Omega))}^4 
+ \|\hat{u}_{h}\|_{H^1(0, T; V^{*})}^2
\\[2mm] 
&+ h\|\hat{\mu}_{h}\|_{L^{\infty}(0, T; H)}^2 
+ h^2\|\hat{\mu}_{h}\|_{H^1(0, T; H)}^2 
\leq C  
\end{align*}
for all $\ep \in (0, 1]$, $\lambda \in (0, \ep)$, $h \in (0, h_{1})$.  
\end{lem}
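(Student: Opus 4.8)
The plan is to reduce every term on the left‑hand side to a quantity that has already been estimated, by rewriting the piecewise‑affine interpolants $\hat{u}_{h}$ and $\hat{\mu}_{h}$ in terms of their nodal values via the identities \eqref{rem1}--\eqref{rem3} of the Remark, and then invoking (A5) together with Lemmas \ref{timedisces1} and \ref{timedisces3}; essentially no new analysis is needed, only bookkeeping of the weights $\lambda$ and $h$.

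First I would handle the $L^{\infty}$‑in‑time norms. Since $\hat{u}_{h}$ interpolates the nodal values $u_{\lambda,0}=u_{0\ep},\dots,u_{\lambda,N}$ affinely, convexity of any Banach‑space norm gives $\|\hat{u}_{h}\|_{L^{\infty}(0,T;X)}=\max\{\|u_{0\ep}\|_{X},\ \|\overline{u}_{h}\|_{L^{\infty}(0,T;X)}\}$, which for $X=V$ and $X=L^{4}(\Omega)$ is precisely \eqref{rem2} and \eqref{rem1}. Hence $\ep\|\hat{u}_{h}\|_{L^{\infty}(0,T;V)}^{2}$ and $\|\hat{u}_{h}\|_{L^{\infty}(0,T;L^{4}(\Omega))}^{4}$ split into a $u_{0\ep}$‑part, controlled by (A5) (using $\ep\le1$ and $L^{4}(\Omega)\hookrightarrow H$ to absorb $\ep\|u_{0\ep}\|_{H}^{2}$), and an $\overline{u}_{h}$‑part, controlled by Lemma \ref{timedisces1}. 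Running the same convexity argument with $X=H$ and $X=V^{*}$ and using $L^{4}(\Omega)\hookrightarrow H\hookrightarrow V^{*}$ yields $\|\hat{u}_{h}\|_{L^{\infty}(0,T;H)}+\|\hat{u}_{h}\|_{L^{\infty}(0,T;V^{*})}\le C$. For $\hat{\mu}_{h}$, identity \eqref{rem3} reads $\|\hat{\mu}_{h}\|_{L^{\infty}(0,T;H)}=\|\overline{\mu}_{h}\|_{L^{\infty}(0,T;H)}$, so $h\|\hat{\mu}_{h}\|_{L^{\infty}(0,T;H)}^{2}\le C$ by Lemma \ref{timedisces1}.

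Next I would pass from these $L^{\infty}$‑in‑time bounds to the $L^{2}$‑in‑time bounds of the interpolants themselves, which is immediate since $T<\infty$: $\lambda\|\hat{u}_{h}\|_{L^{2}(0,T;H)}^{2}\le T\|\hat{u}_{h}\|_{L^{\infty}(0,T;H)}^{2}\le C$ because $\lambda<\ep\le1$; $\|\hat{u}_{h}\|_{L^{2}(0,T;V^{*})}^{2}\le C$; and $h^{2}\|\hat{\mu}_{h}\|_{L^{2}(0,T;H)}^{2}\le hT\bigl(h\|\hat{\mu}_{h}\|_{L^{\infty}(0,T;H)}^{2}\bigr)\le C$ because $h<h_{1}<1$. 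The time‑derivative parts are already available: $\lambda\|(\hat{u}_{h})_{t}\|_{L^{2}(0,T;H)}^{2}\le C$ and $h^{2}\|(\hat{\mu}_{h})_{t}\|_{L^{2}(0,T;H)}^{2}\le C$ are contained in Lemma \ref{timedisces1}, and $\|(\hat{u}_{h})_{t}\|_{L^{2}(0,T;V^{*})}^{2}\le C$ is Lemma \ref{timedisces3}. Summing all of these gives the assertion.

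I do not expect a genuine obstacle; the only point requiring care is checking that the weights stay matched to the available estimates — for $\lambda\|\hat{u}_{h}\|_{H^{1}(0,T;H)}^{2}$ the weight $\lambda$ is truly needed only on the $(\hat{u}_{h})_{t}$ contribution (the $L^{2}(0,T;H)$ part being controlled by the $L^{\infty}(0,T;H)$ bound since $\lambda\le1$), and for $h^{2}\|\hat{\mu}_{h}\|_{H^{1}(0,T;H)}^{2}$ the weight $h^{2}$ is needed on both contributions — which is exactly what Lemmas \ref{timedisces1} and \ref{timedisces3} supply.
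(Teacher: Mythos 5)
Your proposal is correct and follows essentially the same route as the paper, which proves this lemma precisely by combining the interpolation identities \eqref{rem1}--\eqref{rem3} with Lemmas \ref{timedisces1} and \ref{timedisces3} (the initial-data terms being absorbed via (A5), as you do). The only difference is that you spell out the bookkeeping of the weights $\lambda$, $\ep$, $h$ explicitly, which the paper leaves to the reader.
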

\begin{proof}
This lemma holds by \eqref{rem1}-\eqref{rem3}, 
Lemmas \ref{timedisces1} and \ref{timedisces3}.   
\end{proof}
\medskip

Now we set the operator $\Phi : L^4(\Omega) \to V^{*}$ as 
\begin{align}\label{PhiL4toVstar}
&\langle \Phi u, z \rangle_{V^{*}, V} 
:= -\eta\int_{\Omega} u\nabla(1-\Delta)^{-1}u\cdot\nabla z   
\quad \mbox{for}\ u \in L^4(\Omega),\ z \in V.  
\end{align}
\begin{prlem2.2}
Let $\ep \in (0, 1]$ and let $\lambda \in (0, \ep)$. 
Then 
by Lemmas \ref{timedisces1}, \ref{timedisces2}, \ref{timedisces4}, \ref{timedisces6}, 
\ref{timedisces7}, 
recalling \eqref{rem4}, \eqref{rem5} 
and the compactness of the embedding $V \hookrightarrow H$ 
there exist some functions 
$u_{\ep, \lambda}$, $\mu_{\ep, \lambda}$, $\xi_{\ep, \lambda}$
such that 
\begin{align*}
&u_{\ep, \lambda} \in H^1(0, T; H) \cap L^{\infty}(0, T; V) \cap L^2(0, T; W),\ 
\mu_{\ep, \lambda} \in L^2(0, T; V), 
\\
&\xi_{\ep, \lambda} \in L^2(0, T; H)
\end{align*}
and 
\begin{align}
&\hat{u}_{h} \to u_{\ep, \lambda} 
\quad \mbox{weakly$^{*}$ in}\ H^1(0, T; H) \cap L^{\infty}(0, T; V), \label{1weak1} 
\\
&\hat{u}_{h} \to u_{\ep, \lambda} 
\quad \mbox{strongly in}\ C([0, T]; H), \label{1strong1} 
\\ 
&h\hat{\mu}_{h} \to 0 
\quad \mbox{strongly in}\ L^{\infty}(0, T; H), \notag 
\\ 
&h\hat{\mu}_{h} \to 0 
\quad \mbox{weakly in}\ H^1(0, T; H), \label{1weak2} 
\\ 
&\overline{\mu}_{h} \to \mu_{\ep, \lambda} 
\quad \mbox{weakly in}\ L^2(0, T; V), \label{1weak3}
\\ 
&\beta(\overline{u}_{h}) \to \xi_{\ep, \lambda} 
\quad \mbox{weakly in}\ L^2(0, T; H), \label{1weak4} 
\\ 
&\overline{u}_{h} \to u_{\ep, \lambda} 
\quad \mbox{weakly$^{*}$ in}\ L^{\infty}(0, T; L^4(\Omega)), \notag 
\\
&\overline{u}_{h} \to u_{\ep, \lambda} 
\quad \mbox{weakly in}\ L^2(0, T; W), \label{1weak5} 
\\
&\underline{u}_{h} \to u_{\ep, \lambda} 
\quad \mbox{weakly$^{*}$ in}\ L^{\infty}(0, T; L^4(\Omega)) \label{1weak6} 
\end{align}
as $h=h_{j}\searrow0$. 
We can verify \eqref{de9} by \eqref{1strong1}. 
Now we check \eqref{de8}. 
It follows from \eqref{rem4}, Lemma \ref{timedisces1} and \eqref{1strong1} 
that 
\begin{align}\label{g1}
\|\overline{u}_{h}-u_{\ep, \lambda}\|_{L^2(0, T; H)} 
&\leq \|\overline{u}_{h}-\hat{u}_{h}\|_{L^2(0, T; H)} 
        + \|\hat{u}_{h}-u_{\ep, \lambda}\|_{L^2(0, T; H)} 
\notag \\ 
&= \frac{h}{\sqrt{3}}\|(\hat{u}_{h})_{t}\|_{L^2(0, T; H)} 
         + \|\hat{u}_{h}-u_{\ep, \lambda}\|_{L^2(0, T; H)} 
\notag \\ 
&\to 0
\end{align}
as $h=h_{j}\searrow0$. 
Therefore we see from \eqref{1weak4} and \eqref{g1} that 
\begin{align*}
\int_{0}^{T}(\beta(\overline{u}_{h}(t)), \overline{u}_{h}(t))_{H}\,dt 
\to \int_{0}^{T} (\xi_{\ep, \lambda}(t), u_{\ep, \lambda}(t))_{H}\,dt
\end{align*} 
as $h=h_{j}\searrow0$, whence it holds that  
\begin{align}\label{g2}
\xi_{\ep, \lambda} = \beta(u_{\ep, \lambda}) 
\quad \mbox{a.e.\ on}\ \Omega\times(0, T) 
\end{align}
(see e.g., \cite[Lemma 1.3, p.\ 42]{Barbu1}).  
Thus we can obtain \eqref{de8} by the second equation in \ref{Ph}, 
\eqref{1weak1}, \eqref{1weak3}-\eqref{1weak5}, \eqref{g2},  
the Lipschitz continuity of $\pi_{\ep}$, \eqref{g1}, 
and by observing that $\overline{f}_{h} \to f$ 
strongly in $L^2(0, T; V)$ as $h\searrow0$ (cf.\ \cite[Section 5]{CK1}).       

Next we prove \eqref{de7}. 
Owing to standard elliptic regularity theory (\cite{F-1969}) and Lemma \ref{timedisces7}, 
there exists a constant $C_{1} > 0$ such that 
\begin{align*}
&\|F^{-1}(\hat{u}_{h})_{t}\|_{L^2(0, T; V)} 
= \|(\hat{u}_{h})_{t}\|_{L^2(0, T; V^{*})} \leq C_{1}, 
\\ 
&\|F^{-1}\hat{u}_{h}\|_{L^{\infty}(0, T; W^{2, 4}(\Omega))} 
= \|(-\Delta + 1)^{-1}\hat{u}_{h}\|_{L^{\infty}(0, T; W^{2, 4}(\Omega))} \leq C_{1}
\end{align*}  
for all $h \in (0, h_{1})$ 
and then we have from 
$W^{2, 4}(\Omega) \hookrightarrow W^{1, 4}(\Omega) \hookrightarrow V$  
with compactness embedding $W^{2, 4}(\Omega) \hookrightarrow W^{1, 4}(\Omega)$ 
and Lemma \ref{ALlem} that 
\begin{align*}
F^{-1}\hat{u}_{h} \to F^{-1} u_{\ep, \lambda} 
\quad \mbox{strongly in}\ C([0, T]; W^{1, 4}(\Omega)) 
\end{align*}
as $h=h_{j}\searrow0$, 
which yields that 
\begin{align}\label{g3}
(1-\Delta)^{-1}\hat{u}_{h} \to (1-\Delta)^{-1} u_{\ep, \lambda} 
\quad \mbox{strongly in}\ L^2(0, T; W^{1, 4}(\Omega)) 
\end{align}
as $h=h_{j}\searrow0$. 
Thus we derive from \eqref{rem5}, 
the continuity of 
the embedding $W \hookrightarrow W^{1, 4}(\Omega)$, 
the elliptic regularity theory, 
\eqref{rem4}, Lemma \ref{timedisces1} and \eqref{g3}  
that 
\begin{align*}
&\|(1-\Delta)^{-1}\underline{u}_{h} 
                          - (1-\Delta)^{-1}u_{\ep, \lambda}\|_{L^2(0, T; W^{1, 4}(\Omega))} 
\notag \\ 
&\leq h\|(1-\Delta)^{-1}({u}_{h})_{t}\|_{L^2(0, T; W^{1, 4}(\Omega))} 
         + \|(1-\Delta)^{-1}(\overline{u}_{h} - \hat{u}_{h})\|_{L^2(0, T; W^{1, 4}(\Omega))} 
\notag \\
   &\,\quad+ \|(1-\Delta)^{-1}\hat{u}_{h} 
                          - (1-\Delta)^{-1}u_{\ep, \lambda}\|_{L^2(0, T; W^{1, 4}(\Omega))}  
\notag \\ 
&\leq C_{2}h\|(1-\Delta)^{-1}({u}_{h})_{t}\|_{L^2(0, T; W)} 
         + C_{2}\|(1-\Delta)^{-1}(\overline{u}_{h} - \hat{u}_{h})\|_{L^2(0, T; W)} 
\notag \\ 
    &\,\quad+ \|(1-\Delta)^{-1}\hat{u}_{h} 
                          - (1-\Delta)^{-1}u_{\ep, \lambda}\|_{L^2(0, T; W^{1, 4}(\Omega))}  
\notag \\ 
&\leq C_{3}h\|(\hat{u}_{h})_{t}\|_{L^2(0, T; H)}
         + \|(1-\Delta)^{-1}\hat{u}_{h} 
                          - (1-\Delta)^{-1}u_{\ep, \lambda}\|_{L^2(0, T; W^{1, 4}(\Omega))}  
\notag \\ 
&\to 0 
\end{align*}
as $h=h_{j}\searrow0$, where $C_{2}, C_{3} > 0$ are some constants.    
Hence we infer that 
\begin{align}\label{g4}
\nabla(1-\Delta)^{-1}\underline{u}_{h} \to \nabla(1-\Delta)^{-1}u_{\ep, \lambda} 
\quad \mbox{strongly in}\ L^2(0, T; L^4(\Omega))
\end{align}
as $h=h_{j}\searrow0$. 
Moreover, Lemma \ref{timedisces2}, the convergences \eqref{1weak6} and \eqref{g4} 
imply that
\begin{align}\label{g5}
&\left|\int_{0}^{T}\int_{\Omega} 
           (\underline{u}_{h}\nabla(1-\Delta)^{-1}\underline{u}_{h}
             -u_{\ep, \lambda}\nabla(1-\Delta)^{-1}u_{\ep, \lambda})\cdot\nabla\psi \right| 
\notag \\ 
&\leq \left|\int_{0}^{T}\int_{\Omega} 
           (\underline{u}_{h}-u_{\ep, \lambda})\nabla(1-\Delta)^{-1}u_{\ep, \lambda}
                                                                                       \cdot\nabla\psi \right| 
\notag \\ 
 &\,\quad+ \left|\int_{0}^{T}\int_{\Omega} 
                \underline{u}_{h}\nabla(1-\Delta)^{-1}(\underline{u}_{h}-u_{\ep, \lambda})
                                                                                       \cdot\nabla\psi \right| 
\notag \\ 
&\leq \left|\int_{0}^{T}
                  \langle 
                      \underline{u}_{h}(t)-u_{\ep, \lambda}(t), 
                           \nabla(1-\Delta)^{-1}u_{\ep, \lambda}(t)\cdot\nabla\psi(t) 
                                                \rangle_{L^4(\Omega), L^{4/3}(\Omega)}\,dt \right| 
\notag \\ 
&\,\quad+ C_{4}\|\nabla(1-\Delta)^{-1}
                             (\underline{u}_{h} - u_{\ep, \lambda})\|_{L^2(0, T; L^4(\Omega))}
                               \|\nabla\psi\|_{L^2(0, T; H)} 
\notag \\ 
&\to 0
\end{align}
as $h=h_{j}\searrow0$, where $C_{4}>0$ is some constant.  
Therefore it follows from the first equation in \ref{Ph}, 
\eqref{1weak1}, \eqref{1weak2}, \eqref{1weak3}, \eqref{g5} and \eqref{PhiL4toVstar} 
that 
\begin{align*}
(u_{\ep, \lambda})_{t} + (F - I)\mu_{\ep, \lambda} + \Phi(u_{\ep, \lambda}) = 0 
\quad \mbox{in}\ L^2(0, T; V^{*}),  
\end{align*}
which leads to \eqref{de7}. 
\qed
\end{prlem2.2}

\vspace{10pt}
 
\section{Proof of main theorems}\label{Sec5}
In this section we will prove Theorems \ref{maintheorem1} and \ref{maintheorem2}.  
\begin{lem}\label{aboutestiforPlamep}
There exists a constant $M>0$, depending only on the data, 
such that 
     \begin{align}
     & \lambda\int_{0}^{t}\|u_{\ep, \lambda}'(s)\|_{H}^2\,ds 
     + \ep \|u_{\ep, \lambda}(t)\|_{V}^2 
     + \|u_{\ep, \lambda}(t)\|_{L^4(\Omega)}^4 \leq M, \label{lamepes1} 
     \\[2mm] 
     &\int_{0}^{t}\|u_{\ep, \lambda}'(s)\|_{V^*}^2\,ds \leq M,  \label{lamepes2} 
     \\[2mm] 
     &\ep^2\int_{0}^{t}\|u_{\ep, \lambda}(s)\|_{W}^2\,ds \leq M, \label{lamepes3} 
     \\[2mm] 
     &\int_{0}^{t}\|\mu_{\ep, \lambda}(s)\|_{V}^2\,ds 
            + \int_{0}^{t}\|\beta(u_{\ep, \lambda}(s))\|_{H}^2\,ds \leq M \label{lamepes4}
     \end{align}
for all $t\in[0, T]$, $\ep\in(0, 1]$, $\lambda\in(0, \ep)$. 
\end{lem}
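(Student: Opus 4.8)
The plan is to obtain the $\lambda,\ep$-uniform estimates \eqref{lamepes1}--\eqref{lamepes4} by passing to the limit (as the time step $h\searrow0$) in the discrete estimates already established in Section~\ref{Sec4}, noting that every bound in Lemmas~\ref{timedisces1}--\ref{timedisces7} has a constant $C$ that depends only on the data and is independent of $\ep$, $\lambda$ and $h$. First I would recall from the proof of Lemma~\ref{existenceforPeplam} the convergences \eqref{1weak1}--\eqref{1weak6}, which identify $u_{\ep,\lambda}$, $\mu_{\ep,\lambda}$ as $h$-limits of $\hat u_h,\overline u_h,\underline u_h$ and $\overline\mu_h$. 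Then \eqref{lamepes1} follows by combining the weak$^*$ lower semicontinuity of the norms $\|\cdot\|_{L^\infty(0,T;V)}$ and $\|\cdot\|_{L^\infty(0,T;L^4(\Omega))}$ and the norm $\|\cdot\|_{L^2(0,T;H)}$ applied to the bounds $\ep\|\overline u_h\|_{L^\infty(0,T;V)}^2$, $\|\overline u_h\|_{L^\infty(0,T;L^4(\Omega))}^4$ and $\lambda\|(\hat u_h)_t\|_{L^2(0,T;H)}^2$ of Lemma~\ref{timedisces1}; here one uses that $(\hat u_h)_t\to u_{\ep,\lambda}'$ weakly in $L^2(0,T;H)$ (from \eqref{1weak1}) and that the pointwise-in-$t$ bounds pass to the supremum on $[0,T]$ via \eqref{1strong1} together with a standard truncation/lower-semicontinuity argument on each subinterval $[0,t]$.

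Next, \eqref{lamepes2} comes from Lemma~\ref{timedisces3} and the weak lower semicontinuity of $\|\cdot\|_{L^2(0,T;V^*)}$, using $(\hat u_h)_t\to u_{\ep,\lambda}'$ weakly in $L^2(0,T;V^*)$; \eqref{lamepes3} comes from Lemma~\ref{timedisces4} and the weak lower semicontinuity of $\|\cdot\|_{L^2(0,T;W)}$ applied to \eqref{1weak5}; and \eqref{lamepes4} comes from Lemma~\ref{timedisces6} together with \eqref{1weak3}, \eqref{1weak4} and the identification \eqref{g2} that $\xi_{\ep,\lambda}=\beta(u_{\ep,\lambda})$. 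In each case the key point is merely that the relevant discrete quantity is bounded by a constant $M$ independent of $\ep,\lambda,h$, so that the same $M$ (or a fixed multiple of it) survives the limit. To get the estimates on the running interval $[0,t]$ rather than just $[0,T]$, I would either restrict all the discrete sums in Lemmas~\ref{timedisces1}--\ref{timedisces6} to $n=0,\dots,m-1$ with $mh\le t$ (which is exactly how those lemmas were proved) and then pass to the limit, or simply observe that $\|\cdot\|_{L^2(0,t;X)}\le\|\cdot\|_{L^2(0,T;X)}$ and $\sup_{[0,t]}\le\sup_{[0,T]}$.

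The only genuine subtlety, and hence the step I would treat most carefully, is making sure the constant $M$ is truly independent of both $\ep$ and $\lambda$: one must check that the application of the discrete Gronwall lemma in the proof of Lemma~\ref{timedisces1} produces a constant $C_5$ that does not blow up as $\ep\searrow0$ or $\lambda\searrow0$ (the Gronwall exponent involves $C_4 T$ with $C_4$ absorbing only data-dependent constants, not $\ep$ or $\lambda$), and that the subsequent Lemmas~\ref{timedisces3}--\ref{timedisces6}, which are built on Lemma~\ref{timedisces1}, inherit this independence. Granting that — which is exactly what the statements of those lemmas assert — the passage to the limit is routine lower semicontinuity, and the lemma follows.
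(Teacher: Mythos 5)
Your proposal is correct and matches the paper's argument: the paper proves Lemma \ref{aboutestiforPlamep} precisely by citing the $h$-, $\ep$-, $\lambda$-independent bounds of Lemmas \ref{timedisces1}, \ref{timedisces3}, \ref{timedisces4} and \ref{timedisces6} and letting them pass to the limit $h\searrow0$ through the convergences \eqref{1weak1}--\eqref{1weak6} by weak/weak$^{*}$ lower semicontinuity of the norms, exactly as you describe. Your extra care about the Gronwall constant and the identification \eqref{g2} is consistent with what the cited lemmas already guarantee, so no changes are needed.
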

\begin{proof}
This lemma holds 
by Lemmas \ref{timedisces1}, \ref{timedisces3}, \ref{timedisces4} and \ref{timedisces6}. 
\end{proof}
\begin{prth1.1}
Let $\ep \in (0, 1]$. 
Then we have from Lemma \ref{aboutestiforPlamep},  
the compactness of the embedding $V \hookrightarrow H$ 
that 
there exist some functions $u_{\ep}$, $\mu_{\ep}$, $\xi_{\ep}$ such that 
\begin{align*}
&u_{\ep} \in H^1(0, T; V^{*}) \cap L^{\infty}(0, T; V) \cap L^2(0, T; W),\ 
\mu_{\ep} \in L^2(0, T; V), 
\\
&\xi_{\ep} \in L^2(0, T; H) 
\end{align*}
and   
\begin{align}
&\lambda u_{\ep, \lambda} \to 0 
\quad \mbox{strongly in}\ H^1(0, T; H), \label{2strong1} 
\\ 
&u_{\ep, \lambda} \to u_{\ep} 
\quad \mbox{weakly$^{*}$ in}\ 
                H^1(0, T; V^{*}) \cap L^{\infty}(0, T; V) \cap L^2(0, T; W), \label{2weak1} 
\\ 
&u_{\ep, \lambda} \to u_{\ep} 
\quad \mbox{strongly in}\ 
                        C([0, T]; H),  \label{2strong2} 
\\ 
&\mu_{\ep, \lambda} \to \mu_{\ep} 
\quad \mbox{weakly in}\ L^2(0, T; V), \label{2weak2} 
\\ 
&\beta(u_{\ep, \lambda}) \to \xi_{\ep} 
\quad \mbox{weakly in}\ L^2(0, T; H) \label{2weak3}
\end{align}
as $\lambda=\lambda_{j}\searrow0$. 
We can check \eqref{de6} by \eqref{2strong2}. 
Now we show \eqref{de5}. 
The convergences \eqref{2weak3} and \eqref{2strong2} yield that 
\begin{align*}
\int_{0}^{T}(\beta(u_{\ep, \lambda}(t)), u_{\ep, \lambda}(t))_{H}\,dt 
\to \int_{0}^{T}(\xi_{\ep}(t), u_{\ep}(t))_{H}\,dt 
\end{align*}
as $\lambda=\lambda_{j}\searrow0$,  
which means that 
\begin{align}\label{h1}
\xi_{\ep} = \beta(u_{\ep}) \quad \mbox{a.e.\ on}\ \Omega\times(0, T)  
\end{align}
(see e.g., \cite[Lemma 1.3, p.\ 42]{Barbu1}).   
Thus we can confirm that \eqref{de5} holds  
by \eqref{de8}, \eqref{2weak2}, \eqref{2strong1}, \eqref{2weak1}, \eqref{2weak3}, 
\eqref{h1}, 
the Lipschitz continuity of $\pi_{\ep}$, and \eqref{2strong2}.  

Next we verify \eqref{de4}. 
It follows from standard elliptic regularity theory (\cite{F-1969}) 
and Lemma \ref{aboutestiforPlamep} that 
there exists a constant $C_{1}>0$ such that 
\begin{align*}
&\|F^{-1}(u_{\ep, \lambda})_{t}\|_{L^2(0, T; V)} 
= \|(u_{\ep, \lambda})_{t}\|_{L^2(0, T; V^{*})} \leq C_{1}, 
\\ 
&\|F^{-1}u_{\ep, \lambda}\|_{L^{\infty}(0, T; W^{2, 4}(\Omega))} 
= \|(-\Delta + 1)^{-1}u_{\ep, \lambda}\|_{L^{\infty}(0, T; W^{2, 4}(\Omega))} \leq C_{1}
\end{align*}  
for all $\lambda \in (0, \ep)$. 
Thus we deduce from 
$W^{2, 4}(\Omega) \hookrightarrow W^{1, 4}(\Omega) \hookrightarrow V$  
with compactness embedding $W^{2, 4}(\Omega) \hookrightarrow W^{1, 4}(\Omega)$ 
and Lemma \ref{ALlem} that 
\begin{align*}
F^{-1}u_{\ep, \lambda} \to F^{-1} u_{\ep} 
\quad \mbox{strongly in}\ C([0, T]; W^{1, 4}(\Omega)) 
\end{align*}
as $\lambda=\lambda_{j}\searrow0$, 
which leads to the convergence  
\begin{align*}
(1-\Delta)^{-1}u_{\ep, \lambda} \to (1-\Delta)^{-1} u_{\ep} 
\quad \mbox{strongly in}\ L^4(0, T; W^{1, 4}(\Omega)) 
\end{align*}
as $\lambda=\lambda_{j}\searrow0$. 
Then it holds that 
\begin{align}\label{h2}
\nabla(1-\Delta)^{-1}u_{\ep, \lambda}\cdot\nabla\psi 
\to \nabla(1-\Delta)^{-1} u_{\ep}\cdot\nabla\psi  
\quad \mbox{strongly in}\ L^{4/3}(0, T; L^{4/3}(\Omega)) 
\end{align}
as $\lambda=\lambda_{j}\searrow0$ for all $\psi \in L^2(0, T; V)$. 
Therefore we see 
from \eqref{de7}, \eqref{2weak1}, \eqref{2weak2} and \eqref{h2}
that 
\begin{align*}
(u_{\ep})_{t} + (F - I)\mu_{\ep} + \Phi(u_{\ep}) = 0 
\quad \mbox{in}\ L^2(0, T; V^{*}),   
\end{align*}
where $\Phi$ is as in \eqref{PhiL4toVstar},  
and then we can obtain \eqref{de4}. 
\qed
\end{prth1.1}
\begin{prth1.2}
By \eqref{epes1}-\eqref{epes4} 
and the compactness of the embedding $H \hookrightarrow V^{*}$  
there exist some functions $u$, $\mu$, $\xi$ such that 
\begin{align*}
&u \in H^1(0, T; V^{*}) \cap L^{\infty}(0, T; L^4(\Omega)),\ 
\mu \in L^2(0, T; V), 
\\ 
&\xi \in L^2(0, T; H) 
\end{align*}
and 
\begin{align}
&u_{\ep} \to u 
\quad \mbox{weakly$^{*}$ in}\
     H^1(0, T; V^{*}) \cap L^{\infty}(0, T; L^4(\Omega)), \label{3weak1} 
\\ 
&u_{\ep} \to u 
\quad \mbox{strongly in}\ C([0, T]; V^{*}), \label{3strong1} 
\\ 
&\ep u_{\ep} \to 0 
\quad \mbox{strongly in}\ L^{\infty}(0, T; V), \notag 
\\ 
&\ep u_{\ep} \to 0 
\quad \mbox{weakly in}\ L^20, T; W), \label{3weak2} 
\\ 
&\mu_{\ep} \to \mu 
\quad \mbox{weakly in}\ L^2(0, T; V), \label{3weak3} 
\\ 
&\beta(u_{\ep}) \to \xi 
\quad \mbox{weakly in}\ L^2(0, T; H) \label{3weak4} 
\end{align}
as $\ep=\ep_{j}\searrow0$. 
We can prove \eqref{de3} by \eqref{3strong1} and (A5). 
Also we can show \eqref{de1} by \eqref{de4}, \eqref{3weak1}, \eqref{3weak3}, 
\eqref{epes1}, \eqref{epes2}, 
$W^{2, 4}(\Omega) \hookrightarrow W^{1, 4}(\Omega) \hookrightarrow V$  
with compactness embedding $W^{2, 4}(\Omega) \hookrightarrow W^{1, 4}(\Omega)$ 
and Lemma \ref{ALlem}
in the same way as in the proof of Theorem \ref{maintheorem1}.
Now we check \eqref{de2}. 
We have from (A4) and \eqref{epes1} that 
\begin{align}\label{i1}
\|\pi_{\ep}(u_{\ep})\|_{L^{\infty}(0, T; L^4(\Omega))} 
&\leq |\Omega|^{1/4}|\pi_{\ep}(0)| 
         + \|\pi_{\ep}(u_{\ep})-\pi_{\ep}(0)\|_{L^{\infty}(0, T; L^4(\Omega))} 
\notag \\ 
&\leq |\Omega|^{1/4}|\pi_{\ep}(0)| 
         + \|\pi_{\ep}'\|_{L^{\infty}(\mathbb{R})}\|u_{\ep}\|_{L^{\infty}(0, T; L^4(\Omega))} 
\notag \\ 
&\leq C_{1}\ep 
\notag \\ 
&\to 0
\end{align}
as $\ep\searrow0$, 
where $C_{1}>0$ is some constant.  
Hence we derive from \eqref{de5}, \eqref{3weak3}, \eqref{3weak2}, \eqref{3weak4} 
and \eqref{i1} 
that 
\begin{align}\label{i2}
\mu = \xi - f \quad \mbox{a.e.\ on}\ \Omega\times(0, T). 
\end{align}
Moreover, we infer from \eqref{de5}, \eqref{3strong1}, \eqref{3weak3}, 
\eqref{i1}, \eqref{3weak1} and \eqref{i2} that   
\begin{align*}
&\int_{0}^{T}(\beta(u_{\ep}(t)), u_{\ep}(t))_{H}\,dt 
=\int_{0}^{T}(\mu_{\ep}(t) + \ep\Delta u_{\ep}(t) - \pi_{\ep}(u_{\ep}(t)) + f(t), 
                                                                                             u_{\ep}(t))_{H}\,dt 
\notag \\ 
&= \int_{0}^{T}\langle u_{\ep}(t), \mu_{\ep}(t) \rangle_{V^{*}, V}\,dt 
     - \ep\int_{0}^{T} \|\nabla u_{\ep}(t)\|_{H}^2\,dt 
     - \int_{0}^{T}(\pi_{\ep}(u_{\ep}(t)), u_{\ep}(t))_{H}\,dt 
\notag \\ 
&\,\quad + \int_{0}^{T}(f(t), u_{\ep}(t))_{H}\,dt 
\notag \\ 
&\leq \int_{0}^{T}\langle u_{\ep}(t), \mu_{\ep}(t) \rangle_{V^{*}, V}\,dt 
     - \int_{0}^{T}(\pi_{\ep}(u_{\ep}(t)), u_{\ep}(t))_{H}\,dt 
     + \int_{0}^{T}(f(t), u_{\ep}(t))_{H}\,dt 
\notag \\ 
&\to \int_{0}^{T}\langle u(t), \mu(t) \rangle_{V^{*}, V}\,dt 
        + \int_{0}^{T}(f(t), u(t))_{H}\,dt 
= \int_{0}^{T}(\xi(t), u(t))_{H}\,dt 
\end{align*} 
as $\ep=\ep_{j}\searrow0$. 
Thus it holds that 
\begin{align}\label{i3}
\xi = \beta(u) \quad \mbox{a.e.\ on}\ \Omega\times(0, T) 
\end{align}
(see e.g., \cite[Lemma 1.3, p.\ 42]{Barbu1}).  
Therefore we can verify \eqref{de2} 
by \eqref{i2} and \eqref{i3}. 
\qed
\end{prth1.2}

\section*{Acknowledgments}
The author is supported by JSPS Research Fellowships 
for Young Scientists (No.\ 18J21006).   
%
%
%

\end{document}